\newtheorem{theo}{Theorem} 
\newtheorem{exam}[theo]{Example}
\newtheorem{lem}[theo]{Lemma}
\newtheorem{prop}[theo]{Proposition}
\newtheorem{coro}[theo]{Corollary}
\newtheorem{defi}[theo]{Definition}
\newtheorem{quest}[theo]{Question}
\newtheorem{rem}[theo]{Remark}
\newtheorem*{theorA}{Theorem A}
\newtheorem*{theorB}{Theorem B}
\newtheorem*{theorC}{Theorem C}
\def\b{\mathbf b}
\def\g{\mathbf g}
\def\u{\mathbf u}
\def\v{\mathbf v}
\def\w{\mathbf w}
\def\R{\mathbb R}
\def\B{\mathbb B}
\def\D{\mathbb D}
\def\C{\mathbb C}
\def\Z{\mathbb Z}
\def\div{\operatorname{div}}
\def\supp{\operatorname{supp}}
\def\arg{\operatorname{arg}}
\def\Id{\operatorname{\textbf{Id}}}
\def\J{\operatorname{\textbf{J}}}
\def\curl{\operatorname{curl}}
\def\Exp{\operatorname{Exp}}
\def\tr{\operatorname{Tr}}
\title{Pointwise descriptions \\of nearly incompressible vector fields \\
with bounded curl}
\author{Albert Clop, Banhirup Sengupta}
\begin{document}
\maketitle

\begin{abstract}Among those nearly incompressible vector fields $\v:\R^n\to\R^n$ with $|x|\log|x|$ growth at infinity, we give a pointwise characterization of the ones for which $\curl\v= D\v-D^t\v$ belongs to $L^\infty$. When $n=2$ we can go further and describe, still in pointwise terms, the vector fields $\v:\R^2\to\R^2$ for which $|\div\v|+|\curl\v|\in L^\infty$.  \end{abstract}

\section{Introduction}

\noindent
Following \cite{Rei}, we will say that a  continuous vector field $\v:\R^n\to\R^n$ is of \emph{Reimann's type}, and write $\v\in Q$, if there is a constant $C_0\geq 0$ such that for each $x,h, k\in\R^n$ with $|h|=|k|\neq 0$ one has
$$\left|\frac{\langle \v(x+h)-\v(x),h\rangle}{|h|^2}-\frac{\langle \v(x+k)-\v(x),k\rangle}{|k|^2}\right|\leq C_0.$$
The best possible value of $C_0$ is denoted $\|\v\|_Q$. This class of vector fields was introduced by H.M. Reimann in \cite{Rei}.  Even though every Lipschitz vector field belongs to the $Q$ class, there exist many vector fields of Reimann type which are not Lipschitz. Indeed, every element of  $Q$ belongs to the Zygmund class. Thus, by the classical ODE theory, the autonomous initial value problem
$$\begin{cases}\frac{d}{dt}X(t,x)=\v(X(t,x)),\\X(0,x)=x.\end{cases}$$
has a well defined, unique flow of time-dependent solutions $X(t,x)$. Moreover, in the space variable $x$, this solution is a H\"older continuous homeomorphism. If $\v=\v(t,x)$ is not autonomous and also depends on time, then the same conclusion holds if one assumes $\sup_t\|\v(t,\cdot)\|_Q<\infty$. \\
\\
The relevance of Reimann's vector fields in Geometric Function Theory was first proven in \cite{Rei} with the quasisymmetry of the flow maps $x\mapsto X(t,x)$. At the same time, it is quite remarkable the fact that these maps enjoy a significant degree of Sobolev regularity in the space variable, as a consequence of the quasisymmetry. This fact puts Reimann's $Q$ class into a very narrow and unstable borderline: the one between the classical ODE theory and a much more recent result by Jabin \cite{J} (see also \cite{ACM}). Roughly, in the first theory Lipschitz vector fields are proven to produce bilipschitzian flows. The second theory refers to vector fields in the Sobolev space $W^{1,p}$ ($p<\infty$), and asserts that no Sobolev smoothness (even fractional) can be expected for their flow.\\
\\
Among the tools for proving the Sobolev regularity of the flow of a given $\v\in Q$, there is the following differential characterization from \cite[Theorem 3]{Rei},
\begin{equation}\label{Reimannequi}
\v\in Q\hspace{1cm}\Longleftrightarrow\hspace{1cm}S\v\in L^\infty( \R^n)\text{     and    }\frac{|\v(x)|}{|x|\,\log(e+|x|)}\leq C,
\end{equation}
as well as its quantitative formulation $\|\v\|_Q\simeq \|S\v\|_{L^\infty}$. Here $S\v$ denotes the traceless symmetric differential of $\v$,
$$S\v = \frac{D\v + D^t\v}2-\frac{\div\v}{n}\,\Id.$$
When $n=2$, $S\v$ reduces to $\overline\partial\v$, the classical Cauchy-Riemann derivative from complex analysis,
$$\overline\partial\v = \frac{(\partial_x+i\,\partial_y)(v^1+i\,v^2)}2\equiv\frac12\left(\begin{array}{c}\partial_xv^1-\partial_yv^2\\\partial_xv^2+\partial_yv^1\end{array}\right).$$
From \eqref{Reimannequi}, one deduces that if $\v\in Q$ then the flow map $x\mapsto X(t,x)$ is quasiconformal at every time. The authors address the interested reader to the monographs \cite{AIM} or \cite{IM2} for a self-contained background in quasiconformality. Roughly, quasiconformal maps are a relatively compact class of Sobolev homeomorphisms, and their trascendence goes beyond Geometric Function Theory to many areas in mathematics. In particular, when $n=2$ their optimal degree of Sobolev regularity can be obtained from Astala's Area Distortion Theorem \cite{A}.\\
\\
It turns out a similar situation occurs in several active scalar models, an apparently disconnected area. For instance, the planar Euler system for incompresible, inviscid fluids, in vorticity form
\begin{equation}\label{euler}
\begin{cases}
\omega_t + (\v\cdot\nabla)\omega=o\\
\v(t,\cdot)=\frac{i}{2\pi z}\ast \omega(t,\cdot)\\
\omega(0,\cdot)=\omega_0
\end{cases}
\end{equation}
was proven to be well posed by Yudovich \cite{Y} in the class of vector fields with bounded curl. More precisely, given a compactly supported $\omega_0:\R^2\to\R$ with $\omega_0\in L^\infty$, Yudovich \cite{Y} proved existence and uniqueness of a solution $\omega=\omega(t,z)$ of \eqref{euler} belonging to $L^\infty((0,\infty)\times \R^2)$. This, together with the incompressibility, provides us with a vector field $\v=\v(t,z)$ such that $\partial\v\in L^\infty((0,\infty)\times \R^2)$. Here $\partial\v$ denotes the complex derivative of the velocity field $\v$,
$$\partial\v= \frac{(\partial_x-i\,\partial_y)(v^1+i\,v^2)}2\equiv\frac12\left(\begin{array}{c}\partial_xv^1+\partial_yv^2\\\partial_xv^2-\partial_yv^1\end{array}\right)=\frac{\div\v+i\,\curl\v}2.$$
A similar situation is given in the aggregation model (in which the convolution kernel from \eqref{euler} is replaced by $\frac{1}{2\pi z}$). In analogy with Reimann, it was recently shown in \cite{CJ} that, at least for small times, vector fields $\v$ satisfying $\partial\v\in L^\infty$ admit a well defined flow which is Sobolev regular in the space variable, with a Sobolev exponent that may vary with time. In \cite{BN}, this result was improved and obtained a degree of Sobolev regularity for the flow for every time.    \\
\\
Although conditions $\overline\partial\v\in L^\infty$  and $\partial\v\in L^\infty$ may look analytically similar, they have a significant difference. In the first case, for a general non-autonomous $\v$, the flow map $X(t,\cdot)$ belongs to the Sobolev space $W^{1,p}_{loc}$ whenever
$$p<\frac{2}{1-\exp\left(-2\int_0^t\|\overline\partial\v(s,\cdot)\|_{L^\infty}\,ds\right)},$$
as a consequence of both Reimann's \cite{Rei} and Astala's \cite{A} Theorems. In contrast, this remains being an open problem in the second case. In accordance, it was conjectured in \cite{CJ} that if $\partial\v\in L^\infty$ then for each $t>0$ the flow map $X(t,\cdot)$ belongs to the Sobolev space $W^{1,p}_{loc}$ whenever
$$p<\frac{2}{1-\exp\left(-2\int_0^t\|\partial\v(s,\cdot)\|_{L^\infty}\,ds\right)}.$$
The asymptotic behavior of this conjecture as $t\to 0$ was proven to be the right one in \cite{CJ}. Moreover, when $\v$ arises from \eqref{euler}, this conjecture says that $p<\frac{2}{1-e^{-t\,\|\omega_0\|_{L^\infty}}}$. By the Sobolev embedding, this gives a H\"older exponent strictly below $e^{-t\,\|\omega_0\|_{L^\infty}}$, as shown by Bahouri and Chemin \cite{Chex}.  \\
\\
Geometric Function Theory has proven to be very useful in obtaining the optimal Sobolev regularity in Reimann's case, and therefore it is natural to try to face Euler's case with a similar scheme, as it was done in the works \cite{CJMO3, CJ}. In this paper, we continue this line of research by focusing our attention in the pointwise characterization of \cite[Theorem 3]{Rei}. We investigate the existence of similar pointwise characterizations of the condition $\partial\v\in L^\infty$, both in the plane and in higher dimensions. \\
\\
In the plane, we introduce the class $\bar{Q}$ of functions $\v: \R^2\to\R^2$ for which there is a constant $C_0\geq 0$ such that for each $x\in\R^2$ and every $h,k\neq0$ with $|h|=|k|$ one has
$$\left|\frac{\langle \v(x+h)-\v(x),\bar{h}\rangle}{|h|^2}-\frac{\langle \v(x+k)-\v(x),\bar{k}\rangle}{|k|^2}\right|\leq C_0.$$
Here $\bar{h}$ and $\bar{k}$ mean complex conjugates. By $\|\v\|_{\bar{Q}}$ we denote the best possible value of $C_0$. Similarly, we denote by $R$ the set of vector fields  
$\v:\R^2\to\R^2$ for which there is a constant $C_0\geq 0$ such that for each $x\in\R^2$, every $h,k\neq0$ with $|h|=|k|$, and every $\theta\in[0,2\pi]$, one has
$$\left|\frac{\langle \v(x+h)-\v(x),e^{i\theta}k\rangle}{|h||k|}-\frac{\langle \v(x+k)-\v(x),e^{i\theta}h\rangle}{|h||k|}\right|\leq C_0.$$
Again, $\|\v\|_{R}$ denotes the best possible constant $C_0$. Our first result is the following one. 

\begin{theorA}
Let $\v:\R^2\to\R^2$ be a continuous vector field. The following are equivalent:
\begin{itemize}
\item[(a)] $\v \in \bar{Q}$.
\item[(b)] $\v\in R$.
\item[(c)] $\v$ is differentiable a.e., $\partial \v\in L^\infty$, and $\frac{|\v(x)|}{|x|\,\log(e+|x|)}\leq C$.
\end{itemize}
If one of them holds true, then $\|\v\|_{\bar{Q}}\simeq \|\v\|_R\simeq\|\partial\v\|_{L^\infty}$.
\end{theorA}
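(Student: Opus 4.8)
The plan is to prove the cycle $(c)\Rightarrow(a)\Rightarrow(b)\Rightarrow(c)$, with the quantitative comparisons tracked along the way, mimicking the structure of Reimann's argument for \eqref{Reimannequi} but with the conjugation $h\mapsto\bar h$ replacing $h\mapsto h$ wherever it appears. The key algebraic observation is that for the \emph{linear} map $A=D\v(x)$, the quantity $\frac{\langle Ah,\bar h\rangle}{|h|^2}$ is, up to a harmless real additive constant, the real part of $e^{-2i\arg h}$ times the off-diagonal combination that makes up $\partial\v$; concretely, writing $A$ in complex form as $w\mapsto a w + b\bar w$ with $a=\partial\v$ and $b=\overline{\partial}\v$, one computes $\langle Ah,\bar h\rangle=\mathrm{Re}(a\bar h^2)+\mathrm{Re}(b|h|^2)$, so that the difference quotient in the definition of $\bar Q$ evaluated at the linear map equals $\mathrm{Re}\big(a(\overline{h}^2/|h|^2-\overline{k}^2/|k|^2)\big)$, whose supremum over $|h|=|k|$ is exactly $2|a|=2|\partial\v(x)|$. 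This is the exact analogue of the identity, implicit in \cite{Rei}, that the $Q$-oscillation of a linear map is $2\|S\cdot\|$; it immediately gives the ``pointwise'' half of the equivalence once differentiability a.e. is in hand.

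First I would carry out $(c)\Rightarrow(a)$. Given $\v$ differentiable a.e.\ with $\partial\v\in L^\infty$ and the $|x|\log|x|$ growth, one wants to pass from the infinitesimal information to the finite-scale oscillation controlled in the $\bar Q$-norm. The natural route is the one used for \eqref{Reimannequi}: the growth bound plus $\partial\v\in L^\infty$ does \emph{not} by itself control $\overline\partial\v$, but it does control $\v$ in $\mathrm{BMO}$-type / Zygmund-type fashion through the representation $\v(x)=c + \text{(Beurling-type transform of the off-diagonal part)} + \text{(Cauchy transform of }\partial\v)$; more elementary, one integrates $\partial\v$ along segments. I would fix $x,h,k$ with $|h|=|k|=r$, write
\[
\langle\v(x+h)-\v(x),\bar h\rangle=\int_0^1\langle D\v(x+th)\,h,\bar h\rangle\,dt,
\]
expand each $D\v(x+th)$ into its $\partial$ and $\overline\partial$ parts, and observe that the $\overline\partial$ contribution is $\int_0^1\mathrm{Re}(\overline\partial\v(x+th)\,|h|^2)\,dt$, which is \emph{independent of the direction of $h$} up to the factor $|h|^2=r^2$ — hence cancels (modulo an error) when we subtract the $k$-quotient, \emph{provided} we can show the averages $\fint \overline\partial\v$ over the two segments differ by $O(1)$. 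That last fact is exactly where the growth hypothesis enters, via the classical estimate that a function whose $\partial$-derivative is bounded and which grows like $|x|\log|x|$ has its $\overline\partial$-derivative (a Zygmund function) oscillating by $O(1)$ on unit-comparable scales — this is the Reimann mechanism and I would quote \cite{Rei} for it. The remaining $\partial$-term is directly bounded by $2\|\partial\v\|_\infty$ by the linear computation above.

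Next, $(a)\Rightarrow(b)$: the $R$-condition is a ``polarized'' version of $\bar Q$. Given $h,k$ with $|h|=|k|$ and $\theta\in[0,2\pi]$, I would apply the $\bar Q$-bound to the pair $(h+e^{i\theta}k,\,h-e^{i\theta}k)$ (or to suitable rotations thereof), expand the scalar products bilinearly, and read off that the skew combination $\langle\v(x+h)-\v(x),e^{i\theta}\bar k\rangle-\langle\v(x+k)-\v(x),e^{i\theta}\bar h\rangle$ appears as a difference of two $\bar Q$-type quotients up to a controlled remainder; this is routine bilinear bookkeeping, yielding $\|\v\|_R\lesssim\|\v\|_{\bar Q}$. (One must be slightly careful that $h\pm e^{i\theta}k$ need not have equal norms; the fix is to also use the parallelogram-type averaging over $\theta$, or to test $\bar Q$ along the two vectors $h,k$ themselves and reorganize — either way the loss is an absolute constant.)

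Finally, $(b)\Rightarrow(c)$: from $\v\in R$ I must extract a.e.\ differentiability, the bound $\partial\v\in L^\infty$, and the growth estimate. The growth estimate is the easiest — taking $k$ a fixed unit-scale vector and letting $h$ range, the $R$-inequality (or already the $\bar Q$-inequality) forces $|\v(x)|\le C(1+|x|)\log(e+|x|)$ by a telescoping/dyadic summation along a path from $0$ to $x$, exactly as in \cite[Theorem 3]{Rei}. For the differentiability and the $L^\infty$-bound on $\partial\v$, the strategy is: first show $\v\in\mathrm{VMO}_{loc}$ of its gradient — more precisely that $\v$ is in the Zygmund class, so that $D\v$ exists a.e.\ (Rademacher-type / Stepanov, using that Zygmund functions are in $W^{1,p}_{loc}$ for all $p<\infty$); then, at a point of differentiability $x$, pass to the limit $h\to 0$ in the $R$-inequality after the rescaling $h\mapsto rh$, which converts the finite-difference quotients into the linear quantity $\langle D\v(x)h,e^{i\theta}\bar k\rangle-\langle D\v(x)k,e^{i\theta}\bar h\rangle$, and the computation from the first paragraph identifies its supremum with (a constant times) $|\partial\v(x)|$, giving $\|\partial\v\|_\infty\lesssim\|\v\|_R$. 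Establishing the Zygmund regularity from the $R$- (or $\bar Q$-) hypothesis is, I expect, the main obstacle: one needs to show that the $\bar Q$-oscillation condition implies the second-difference bound $|\v(x+h)+\v(x-h)-2\v(x)|\le C|h|$. I would obtain this by choosing $k=-h$ (so $|h|=|k|$) in the $\bar Q$-inequality, which gives control of $\langle\v(x+h)+\v(x-h)-2\v(x),\bar h\rangle/|h|^2$ in one direction, and then rotating: applying the same with $h$ replaced by $ih$ controls the orthogonal component, and combining the two yields the full Zygmund bound. (This is the conjugated analogue of the fact, used in \cite{Rei}, that $Q$-fields are Zygmund; the conjugation changes which scalar component is seen but not the final conclusion.) Once Zygmund regularity is in place the rest is the soft machinery of differentiation theory plus the linear algebra identity, and the three quantities $\|\v\|_{\bar Q}$, $\|\v\|_R$, $\|\partial\v\|_\infty$ come out comparable with absolute constants.
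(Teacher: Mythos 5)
Your proposal breaks down at the step that is in fact the whole point of the theorem: in $(b)\Rightarrow(c)$ you derive a.e.\ differentiability from the claim that ``Zygmund functions are in $W^{1,p}_{loc}$ for all $p<\infty$.'' This is false. Zygmund functions (e.g.\ Weierstrass-type lacunary series $\sum 2^{-n}\cos(2^nx)$) can be nowhere differentiable and are not in $W^{1,1}_{loc}$; the paper states this explicitly as the reason why removing smoothness from the elementary linear-algebra computation ``does not seem automatic.'' So your argument never actually establishes that $\partial\v$ exists as a function, let alone lies in $L^\infty$. The paper's mechanism for this is the harmonic extension $\u=P_y\ast\v$: since $\u(\cdot,y)\in\bar{Q}$ (resp.\ $R$) uniformly in $y$ and is smooth, the linear computation gives $\|\partial\u(\cdot,y)\|_{L^\infty}\le\tfrac12\|\v\|_{\bar Q}$ uniformly; combined with decay at infinity this puts $\partial\u$ in a harmonic Hardy space $h^p$, whose Fatou-type boundary values must coincide with the distribution $\partial\v$, forcing $\partial\v\in L^p\cap L^\infty$; then Lemma \ref{curldiv} (Riesz-transform/Calder\'on--Zygmund) upgrades to $D\v\in BMO$, which is what yields a.e.\ differentiability. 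None of this is replaceable by ``soft machinery of differentiation theory.''

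Two further steps are shakier than you present them. In $(c)\Rightarrow(a)$ you integrate $D\v$ along segments and claim the averages of $\overline\partial\v$ over the two segments differ by $O(1)$; but $\overline\partial\v$ is only a $BMO$ function, whose restriction to a line segment (a null set) is not even well defined, let alone controlled. The paper avoids this by working at height $y=|h|$ of the Poisson extension, via the three-term identity $\v=\int_0^y t\,\partial^2_{yy}\u\,dt-y\,\partial_y\u+\u_y$ together with Bloch/BMO gradient bounds for Poisson integrals (Lemmas \ref{poissonBMO}, \ref{poissonZ}, \ref{higherorder}). And your direct polarization $(a)\Rightarrow(b)$ is not ``routine bilinear bookkeeping'': the $\bar Q$ form pairs the increment at $h$ with $\bar h$, while the $R$ form pairs the increment at $h$ with the rotated, unconjugated vector $e^{i\theta}k$, so expanding $\bar Q$ at $h\pm e^{i\theta}k$ produces cross terms of the wrong shape ($\langle\Delta_h\v,e^{-i\theta}\bar k\rangle$ with the wrong sign pattern). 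The paper does not attempt this implication directly; it proves $(a)\Leftrightarrow(c)$ and $(b)\Leftrightarrow(c)$ separately, each by the same harmonic-extension scheme, and obtains $(a)\Leftrightarrow(b)$ only through $(c)$.
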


\noindent
The presence of complex conjugation in the definition of $\bar{Q}$ prevents us from extending it to higher dimensions, at least trivially. Extending the definition of $R$ to $\R^n$, $n\geq 2$, seems not an easy task either, because the set of rotations to be included is not obvious (see Lemma  \ref{Rnfailure}).  It turns out that one may still get some $L^\infty$ estimates by removing all rotations, even in higher dimensions. Namely, let us introduce $R_0$ as the class of vector fields $\v:\R^n\to\R^n$ for which there is $C_0$ such that for each $x\in \R^n$ and each $h,k$ with $|h|=|k|\neq 0$ one has
$$\left|\frac{\langle \v(x+h)-\v(x),k\rangle}{|h||k|}-\frac{\langle \v(x+k)-\v(x),h\rangle}{|h||k|}\right|\leq C_0.$$
As usually, $\|\v\|_{R_0}$ denotes the best possible constant $C_0$.  

\begin{theorB}
Let $\v\in R_0$. Then the distribution $D\v-D^t\v$ belongs to $L^\infty$, and 
$$\|D\v-D^t\v\|_{L^\infty}\leq C\,\|\v\|_{R_0}.$$
for some constant $C>0$.\end{theorB}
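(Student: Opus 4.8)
The plan is to reduce the statement to the case of a smooth vector field by mollification, the point being that the whole argument is local, so the absence of any growth hypothesis on $\v$ is irrelevant. Fix a standard nonnegative mollifier $\phi_\epsilon$ with $\int\phi_\epsilon=1$ and put $\v_\epsilon=\v*\phi_\epsilon$; since $\v$ is continuous this is smooth and converges to $\v$ locally uniformly. The first step is to check that the class $R_0$ is stable under mollification without loss in the constant. For fixed $x,h,k$,
$$\langle \v_\epsilon(x+h)-\v_\epsilon(x),k\rangle-\langle \v_\epsilon(x+k)-\v_\epsilon(x),h\rangle=\int\phi_\epsilon(y)\,\bigl[\langle \v(x-y+h)-\v(x-y),k\rangle-\langle \v(x-y+k)-\v(x-y),h\rangle\bigr]\,dy,$$
and the bracketed quantity is bounded in absolute value by $\|\v\|_{R_0}\,|h|\,|k|$ at each point $x-y$; since $\phi_\epsilon$ is a probability density, this yields $\|\v_\epsilon\|_{R_0}\le\|\v\|_{R_0}$.

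The second step extracts the pointwise bound for $\v_\epsilon$. Taylor expanding at $x$ gives $\langle \v_\epsilon(x+h)-\v_\epsilon(x),k\rangle=\langle D\v_\epsilon(x)h,k\rangle+O\!\left(\|D^2\v_\epsilon\|_{L^\infty}|h|^2|k|\right)$, and the same with $h$ and $k$ interchanged; subtracting and invoking the elementary identity $\langle Ah,k\rangle-\langle Ak,h\rangle=\langle (A-A^t)h,k\rangle$ with $A=D\v_\epsilon(x)$, one obtains, whenever $|h|=|k|$,
$$\bigl|\langle (D\v_\epsilon(x)-D^t\v_\epsilon(x))h,k\rangle\bigr|\le\|\v\|_{R_0}\,|h|\,|k|+C\,\|D^2\v_\epsilon\|_{L^\infty}\,|h|^3.$$
Writing $h=r\hat h$, $k=r\hat k$ with $|\hat h|=|\hat k|=1$, dividing by $r^2$ and letting $r\to0$ kills the error term and gives $|\langle (D\v_\epsilon(x)-D^t\v_\epsilon(x))\hat h,\hat k\rangle|\le\|\v\|_{R_0}$. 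Taking the supremum over unit vectors and over $x$, $\|D\v_\epsilon-D^t\v_\epsilon\|_{L^\infty}\le C\,\|\v\|_{R_0}$ uniformly in $\epsilon$, where $C$ merely records the comparison between the operator norm and whatever matrix norm one fixes.

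The last step is to pass to the limit. Since $\v_\epsilon\to\v$ in $\mathcal{D}'(\R^n)$, also $D\v_\epsilon-D^t\v_\epsilon=(D\v-D^t\v)*\phi_\epsilon\to D\v-D^t\v$ in $\mathcal{D}'(\R^n)$; on the other hand the family $\{D\v_\epsilon-D^t\v_\epsilon\}_\epsilon$ is bounded in $L^\infty(\R^n)=(L^1(\R^n))^*$, hence has a weakly-$*$ convergent subsequence whose limit, by uniqueness of distributional limits, must coincide with $D\v-D^t\v$. Weak-$*$ lower semicontinuity of the norm then gives $\|D\v-D^t\v\|_{L^\infty}\le C\,\|\v\|_{R_0}$, as claimed. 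I do not anticipate a genuine obstacle in this plan; the only subtlety worth emphasizing is that, unlike in Theorem A(c), the argument controls only the antisymmetric part $D\v-D^t\v$ and gives no information on the full differential of $\v$, which need not even exist almost everywhere under the sole assumption $\v\in R_0$ --- so one must resist differentiating $\v$ itself.
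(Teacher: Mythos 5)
Your argument is correct, and it takes a genuinely different and more elementary route than the paper. The paper's proof of this statement (Theorem \ref{R0givesboundedcurl}) first reduces to compactly supported $\v$ by means of the logarithmic cutoffs $g_t$ of \eqref{gt}, then replaces $\v$ by its Poisson extension $\u=P_y\ast\v$, uses Lemma \ref{pointwisecurl} on the smooth slices $\u(\cdot,y)$ to get $|D\u-D^t\u|\leq\|\v\|_{R_0}$ uniformly in $y$, and finally identifies the boundary values of $D\u-D^t\u$ with the distributional curl of $\v$ via the theory of harmonic Hardy spaces $h^p(\R^{n+1}_+)$ (which is where the decay estimates for $DP_y\ast\v$ off the support and the compactness of $\supp\v$ are really needed). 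You replace the Poisson kernel by a compactly supported mollifier and the Hardy-space identification by weak-$\ast$ compactness in $L^\infty=(L^1)^\ast$ together with uniqueness of distributional limits. This eliminates both the reduction to compact support (convolution with $\phi_\epsilon$ is defined for any continuous $\v$) and the $h^p$ machinery, so for this particular statement your proof is shorter and self-contained; the paper's heavier framework is presumably chosen because the same harmonic-extension scaffolding is reused in Theorems \ref{Qimpliesboundedpartial}, \ref{Rimpliesboundeddb} and \ref{AboundedimpliesR0}, where one genuinely needs to recover $BMO$ derivatives and a.e.\ differentiability via Lemma \ref{curldiv}. One cosmetic point: $\|D^2\v_\epsilon\|_{L^\infty}$ need not be finite globally, since $\v\in R_0\subset Z$ only gives $|x|\log|x|$ growth; the Taylor remainder at a fixed $x$ should be controlled by the supremum of $|D^2\v_\epsilon|$ on, say, $B(x,1)$, which is all that the limit $r\to 0$ at fixed $x$ requires. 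Your closing observation that the argument yields no control on the full differential of $\v$ is exactly the point the authors make after Theorem \ref{R0givesboundedcurl}.
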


\noindent
As it was the case for $Q$, $\bar{Q}$ or $R$, the elements of $R_0$ belong as well to the Zygmund class. However, when $n=2$ the class $R_0$ is much larger than $R$, and one cannot guarantee its elements to be differentiable a.e.. This makes it more difficult to find higher dimensional counterparts to Theorem A.  In the present paper we solve this by asking $\v$ to be nearly incompressible, that is, $\div\v\in L^\infty$. This allows to state the above mentioned counterpart, which is based in the differential operator
$$A\v=\frac{D\v-D^t\v}2+\frac{\div\v}n\,\Id.$$
Note that for $n=2$ one has $A\v\equiv\partial\v$.

\begin{theorC}
Let $\v:\R^n\to\R^n$ be a continuous vector field.
\begin{itemize}
\item[(a)] If $\v\in \R_0$ and $\v$ is nearly incompressible, then $\v$ is differentiable a.e. and the estimate
$$\|A\v\|_{L^\infty}\leq C\,(\|\div\v\|_{L^\infty}+\|\v\|_{R_0})$$
holds.
\item[(b)] If $A\v\in L^\infty$ and $\frac{|v(x)|}{|x|\,\log(e+|x|)}\leq C$ then $\v\in R_0$ and 
$$\|\v\|_{R_0}\leq C\,\|A\v\|_{L^\infty}.$$
\end{itemize}
\end{theorC}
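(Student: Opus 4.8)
The plan is to prove Theorem C by reducing each direction to the already-established Theorems A and B together with the elementary algebraic identities relating the operators $A$, $S$, $D\v-D^t\v$, and $\div\v$. The key structural observation is that $A\v=\tfrac12(D\v-D^t\v)+\tfrac1n(\div\v)\,\Id$, so that the symmetric part of $A\v$ is exactly $\tfrac1n(\div\v)\,\Id$ and the antisymmetric part is exactly $\tfrac12(D\v-D^t\v)$. Hence controlling $\|A\v\|_{L^\infty}$ is equivalent, up to dimensional constants, to controlling $\|\div\v\|_{L^\infty}$ and $\|D\v-D^t\v\|_{L^\infty}$ simultaneously.

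\textbf{Part (a).} Suppose $\v\in R_0$ with $\div\v\in L^\infty$. First I would show $\v$ is differentiable a.e. The membership $\v\in R_0$ forces $\v$ to lie in the Zygmund class (as remarked in the text, by testing the $R_0$ inequality with $k=-h$ one recovers exactly a Zygmund-type modulus of continuity on the component $\langle\v(x+h)-\v(x),h\rangle$, which after polarization controls the full second difference). Zygmund regularity alone does not give a.e. differentiability, but Theorem B gives $D\v-D^t\v\in L^\infty$, and the hypothesis gives $\div\v\in L^\infty$; together these say that the full distributional gradient $D\v$, which decomposes as $\tfrac12(D\v-D^t\v)$ plus the symmetric part $\tfrac12(D\v+D^t\v)$, has its antisymmetric part in $L^\infty$. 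For the symmetric part, one splits $\tfrac12(D\v+D^t\v)=S\v+\tfrac1n(\div\v)\Id$; the trace term is in $L^\infty$ by hypothesis, and $S\v$ is a priori only a Zygmund-class distribution. Here I would invoke Reimann's characterization \eqref{Reimannequi}: the point is that the scalar inequality defining $R_0$, combined with $\div\v\in L^\infty$, implies the Reimann inequality defining $\|\v\|_Q$ up to a controlled error, because the difference between the $Q$-quadratic form $\langle\v(x+h)-\v(x),h\rangle/|h|^2$ and the symmetrized $R_0$-form is governed by $\div\v$ (averaging the $R_0$ inequality over the sphere $|k|=|h|$ against $k$ produces precisely a term $\simeq\div\v$). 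Thus $\v\in Q$ with $\|\v\|_Q\lesssim\|\div\v\|_{L^\infty}+\|\v\|_{R_0}$, so by \eqref{Reimannequi} $\v$ is differentiable a.e. with $\|S\v\|_{L^\infty}\lesssim\|\v\|_Q$. Combining: $D\v=S\v+\tfrac1n(\div\v)\Id+\tfrac12(D\v-D^t\v)\in L^\infty$, hence in particular $A\v\in L^\infty$ with the stated bound.

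\textbf{Part (b).} Suppose $A\v\in L^\infty$ and $|\v(x)|\lesssim|x|\log(e+|x|)$. Taking the antisymmetric part, $D\v-D^t\v=2\,(A\v)_{\mathrm{antisym}}\in L^\infty$; taking the trace, $\div\v=\tfrac n2\operatorname{Tr}(A\v)\in L^\infty$ — wait, more carefully, $\operatorname{Tr}(A\v)=\operatorname{Tr}(\tfrac1n(\div\v)\Id)=\div\v$ since the antisymmetric part is traceless, so $\div\v=\operatorname{Tr}(A\v)\in L^\infty$ with $\|\div\v\|_{L^\infty}\le\|A\v\|_{L^\infty}$. Now I need to run the argument of Theorem B (or its analogue) backward: I want to bound the $R_0$-quadratic-form difference pointwise. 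The cleanest route is to write, for fixed $x$ and $|h|=|k|$,
$$\frac{\langle\v(x+h)-\v(x),k\rangle-\langle\v(x+k)-\v(x),h\rangle}{|h||k|}$$
as a double integral of $D\v$ along the segments $t\mapsto x+th$ and $t\mapsto x+tk$ (valid once one knows $\v\in W^{1,\infty}_{loc}$, which follows since all three pieces of $D\v$ are now in $L^\infty$), and observe that the integrand is an antisymmetric bilinear form in $(h,k)$ evaluated against $D\v$, so only the antisymmetric part $\tfrac12(D\v-D^t\v)$ survives — giving the pointwise bound $\lesssim\|D\v-D^t\v\|_{L^\infty}\le2\|A\v\|_{L^\infty}$. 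The growth hypothesis is what is needed to control the behavior at infinity and legitimize the representation (just as in \eqref{Reimannequi}); it rules out the affine ambiguity (a rotation-type linear field has $D\v-D^t\v$ constant but $A\v$ of the corresponding constant size, so no obstruction arises here beyond the growth). This yields $\v\in R_0$ with $\|\v\|_{R_0}\lesssim\|A\v\|_{L^\infty}$.

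\textbf{Main obstacle.} The delicate point is part (a): extracting a.e. differentiability. Membership in $R_0$ by itself is genuinely weaker than membership in $Q$ (the text stresses that $R_0$ in the plane is strictly larger than $R$ and need not give a.e. differentiability), so the near-incompressibility hypothesis must be used in an essential, non-formal way to upgrade the antisymmetric $L^\infty$ bound of Theorem B to a full $L^\infty$ bound on $D\v$. The crux is the spherical-averaging identity that converts the $R_0$ inequality plus $\div\v\in L^\infty$ into the Reimann $Q$-inequality; making that averaging rigorous — in particular justifying that the error term produced is exactly controlled by $\|\div\v\|_{L^\infty}$ and does not secretly involve higher-order information about $\v$ — is where the real work lies, and it is presumably where the argument will most closely parallel, and need to adapt, the proof of Theorem B.
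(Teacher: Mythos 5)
Both halves of your argument contain a genuine gap, and in both cases the gap sits exactly where the paper has to do real work.

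In part (a), the claimed implication ``$R_0$ plus $\div\v\in L^\infty$ implies $\v\in Q$'' is false, and no spherical averaging can rescue it. The $R_0$ expression $\langle\v(x+h)-\v(x),k\rangle-\langle\v(x+k)-\v(x),h\rangle$ is antisymmetric under $h\leftrightarrow k$, so at the differential level it sees only $D\v-D^t\v$; Reimann's $Q$ form $\langle D\v(x)h,h\rangle/|h|^2-\langle D\v(x)k,k\rangle/|k|^2$ sees only the traceless symmetric part $S\v$. Knowing the curl and the divergence does not bound $S\v$ in $L^\infty$ --- the Yudovich/Bahouri--Chemin velocity fields discussed in the paper have $\div\v=0$ and $\curl\v\in L^\infty$ but are not Lipschitz, hence not in $Q$. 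Consequently your chain $D\v\in L^\infty$ cannot be established. Fortunately it is also not needed: the paper's route is to combine Theorem B ($\curl\v\in L^\infty$) with the hypothesis $\div\v\in L^\infty$ and then invoke the div--curl Lemma \ref{curldiv} (a Calder\'on--Zygmund/Riesz-transform fact), which yields only $D\v\in BMO$ --- and $BMO$ derivatives already give differentiability a.e. The bound on $A\v$ is then immediate since $A\v$ is assembled purely from $\curl\v$ and $\div\v$.

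In part (b), the statement that in the difference quotient ``only the antisymmetric part of $D\v$ survives'' is incorrect, because the two fundamental-theorem-of-calculus integrals run over \emph{different} segments. Writing things out, the symmetric contribution is $\int_0^1\langle(S\v(x+th)-S\v(x+tk))h,k\rangle\,dt$, which vanishes only if $S\v$ were constant; since $S\v$ is merely $BMO$ (not $L^\infty$, so your claim $\v\in W^{1,\infty}_{loc}$ also fails), this term is not trivially $O(|h|)$. Controlling precisely this term is the entire content of the paper's proof of Theorem \ref{AboundedimpliesR0}: one passes to the Poisson extension $\u=P_y\ast\b$, splits $\Delta=\Delta_S+\Delta_A$, bounds the $\Delta_A$ pieces directly by $\|A\b\|_{L^\infty}$, bounds the $\Delta_S$ pieces using the Bloch-type estimates $\|D((S\u_y)_{\alpha,\beta})\|_{L^\infty}\le C\|S\b\|_\ast/y$ from Lemmas \ref{poissonBMO}, \ref{poissonZ} and \ref{higherorder}, and finally chooses $y=h$. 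Your proposal bypasses this mechanism with an algebraic cancellation that does not hold.
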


\noindent
As in Reimann's setting, one of the main tools here is the fact that if $\v$ is a compactly supported vector fields with $A\v\in L^\infty$ then $\v$ has $BMO$ derivatives and, in particular, it is differentiable a.e. (see Lemma \ref{curldiv}). For this reason, here one can relax the assumption $\div\v\in L^\infty$ to $\div\v\in L^p$ for some $p>n$. On the other hand, as a possible application, the above result can be used to describe in a pointwise way, among all the solutions to the Euler system of equations, the ones with bounded curl.  \\
\\
The paper is structured as follows. In Section \ref{poisson} we recall some basic facts about Poisson integrals that will be used in the rest of the paper. In Section \ref{barQsection} we prove $(a)\Leftrightarrow(c)$ from Theorem A. In Section \ref{Rsection} we prove $(b)\Leftrightarrow(c)$ from Theorem A. In Section \ref{R^nsection} we prove Theorems B and C. 

\textbf{Notation}. Bold letters like $\b,\u,\v,\w,\g$ denote vector valued functions. After identifying planar vectors with complex numbers, the inner product in $\R^2$ can be represented as $\langle z,w\rangle=\Re(z\bar{w})$, where $\Re$ denotes real part and $\bar{w}$ stands for the complex conjugate of $w$, that is, if $w=(w_1, w_2)$ then $\bar{w}=(w_1, -w_2)$. If $A\simeq B$ then there is a constant $C\geq 0$ such that $\frac{B}{C}\leq A\leq CB$.

\textbf{Acknowledgements}. The authors warmly thank Artur Nicolau for letting us know about Lemma \ref{higherorder}. Both authors are partially supported by projects MTM2016-81703-ERC, MTM2016-75390 (spanish Government) and 2017SGR395 (catalan Government).

\section{Preliminaries}\label{poisson}

\noindent
In this section we recall some fundamental facts concerning harmonic functions on the upper half space. We refer the interested reader to \cite{St} for a more detailed review on this. We will be working with functions defined on $\R^{n+1}_+$, where points are represented as $(x,y)$ with $x\in\R^n$ and $y>0$. Let us recall that a function $u:\R^{n+1}_+\to\R$ is said to be harmonic if 
$$\Delta u (x,y)=0$$
where $\Delta = \Delta_x+\partial_{yy}^2=\sum_{i=1}^n\partial^2_{x_i,x_i}+ \partial_{yy}^2$. A typical way of constructing harmonic functions on the upper half space is through the Poisson integral of a function $g:\R^n\to\R$,
$$u(x,y)= P_y\ast g(x)=\int_{\R^n}P_y(x-z)\,g(z)\,dz$$
where
$$P(z,y)=P_y(z)=\frac{c_n y}{(|z|^2+y^2)^\frac{n+1}2}$$ 
is the Poisson kernel. Above, the constant $c_n$ is chosen so that $\|P_n\|_{L^1(\R^n)}=1$. For a vector valued $\g:\R^n\to\R^m$, then one interprets $\u=P_y\ast\g:\R^{n+1}_+\to\R^m$ componentwise. In either case, one often says that $u$ is the \emph{Poisson integral of $g$}, and that $g$ represents $u$'s \emph{boundary values}. The following result explains the latter terminology. 

\begin{prop}
If $g\in C_c(\R^n)$ then $u=P_y\ast g$ is the only solution to the Dirichlet problem
$$
\begin{cases}
\Delta u=0&\R^{n+1}_+\\u(\cdot, 0)=g&\R^n.
\end{cases}
$$
\end{prop}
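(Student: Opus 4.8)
The plan is to split the claim into two parts: first, that $u = P_y * g$ is indeed a solution, and second, that it is the only one. For the existence part I would verify that $\Delta_{(x,y)} P(z,y) = 0$ on $\R^{n+1}_+$ by a direct computation (differentiating under the integral sign is justified since $g$ is continuous with compact support and $P_y$ together with all its derivatives is integrable for fixed $y>0$), which gives $\Delta u = 0$. For the boundary values, since $\|P_y\|_{L^1(\R^n)} = 1$ and $P_y$ is an approximate identity as $y \to 0^+$ (mass concentrates near the origin), one has $P_y * g \to g$ uniformly on $\R^n$ as $y \to 0^+$; continuity of $g$ on the compact support and its complement makes this routine. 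So the first part is standard harmonic-analysis bookkeeping.

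For uniqueness, suppose $u_1, u_2$ both solve the problem and set $w = u_1 - u_2$, so $w$ is harmonic on $\R^{n+1}_+$ with $w(\cdot,0) = 0$. The subtlety is that without a growth or boundedness hypothesis, harmonic functions on a half-space with zero boundary values need not vanish (e.g. $w(x,y) = y$). The resolution should use the fact that $g \in C_c(\R^n)$ forces the \emph{particular} solution $u = P_y * g$ to decay at infinity, and presumably the Proposition is implicitly restricting to bounded solutions, or the intended argument is: reflect $w$ oddly across $\{y=0\}$ to obtain a harmonic function on all of $\R^{n+1}$ (using the Schwarz reflection principle, valid since $w$ extends continuously by $0$ to the boundary), and then invoke a Liouville-type or maximum-principle argument. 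If one assumes the solutions are bounded — which is the natural class here, since $P_y * g$ is bounded by $\|g\|_\infty$ — the odd reflection $\tilde w$ is a bounded harmonic function on $\R^{n+1}$, hence constant by Liouville, hence identically $0$ since it vanishes on the hyperplane.

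The main obstacle is exactly this uniqueness subtlety: the statement as written does not specify the class of competitors, and uniqueness genuinely fails in the unrestricted class. I expect the intended reading is that one works with bounded (or suitably decaying) harmonic functions, and the cleanest route is the odd-reflection-plus-Liouville argument above. An alternative avoiding reflection: apply the maximum principle on large half-balls $B_R^+ = \{(x,y) : |x|^2 + y^2 < R^2,\ y > 0\}$, using that $w$ vanishes on the flat part of the boundary and is $O(1/R)$ — or at least $o(1)$ — on the spherical cap (which is where the decay of $P_y * g$ coming from $g \in C_c$ is used), and let $R \to \infty$ to conclude $w \equiv 0$. Either way, once uniqueness within the bounded class is established, combining with the existence part completes the proof. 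I would present the maximum-principle-on-half-balls version, since it keeps the role of the compact support of $g$ explicit and does not require separately invoking reflection.
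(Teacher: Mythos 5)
Your existence argument is identical to the paper's: direct verification that the Poisson kernel is harmonic, plus the approximate-identity property for the boundary values. For uniqueness the paper disposes of the matter in one sentence (``uniqueness follows from the maximum principle for harmonic functions''), whereas you correctly flag that this is not automatic on the unbounded half-space --- $w(x,y)=y$ is harmonic with zero boundary values --- and that one must implicitly restrict to bounded (or decaying) competitors. Your two proposed repairs are both sound: odd reflection of $w=u_1-u_2$ across $\{y=0\}$ followed by Liouville works in the bounded class, and the maximum principle on half-balls $B_R^+$ works provided the competitors are assumed to vanish at infinity (which $P_y\ast g$ does, since $g\in C_c$). So your proof is correct and, on the uniqueness half, actually more complete than the paper's; the only caveat is that the half-ball version needs the decay hypothesis imposed on \emph{both} competitors, not just on the Poisson integral, so if you present that version you should state the competitor class explicitly.
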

\begin{proof}
From$$
\partial^2_{yy}P(z,y)=(n+1)P_y(z)\,\frac{ -3|z|^2+ny^2}{(|z|^2+y^2)^2}\hspace{1cm}\Delta_zP_y(z)=(n+1)\,P_y(z)\,\frac{3|z|^2-ny^2}{(|z|^2+y^2)^2}
$$
it is immediate that $\Delta_z P_y(z)+\partial_{yy}^2P_y(z)=0$ and so $P_y(z)$ is harmonic on $\R^{n+1}_+$. As a consequence, $u$ is harmonic on $\R^{n+1}_+$. About the boundary condition, it suffices to observe that $P_y$ is an approximation of unity in $\R^n$, so one has $P_y\ast g\to g$ uniformly as $y\to 0$. In particular, $u$ is continuous on $\overline{\R^{n+1}_+}$ and $u(x,0)=g(x)$ for every $x\in\R^n$. Uniqueness follows from the maximum principle for harmonic functions.
\end{proof}

\noindent
One may ask if there are other harmonic functions in $\R^{n+1}_+$ that are not representable as $P_y\ast g$ for some $g$. The theory of Hardy spaces helps in this direction. Note that one may also define $P_y\ast g$ even when $g$ is a measure.

\begin{prop}\label{hardy}
Let $u:\R^{n+1}_+\to\R$ be harmonic. 
\begin{itemize}
\item Given $1<p\leq\infty$, there is $g\in L^p(\R^n)$ such that $u=P_y\ast g$ if and only if $\sup_y \|u(\cdot, y)\|_{L^p}<\infty$, and moreover in this case one has $\|g\|_{L^p}=\sup_y \|u(\cdot, y)\|_{L^p}$.
\item There is a finite Borel measure $\mu$ on $\R^n$ with $u=P_y\ast\mu$ if and only if $\sup_y \|u(\cdot, y)\|_1<\infty$, and moreover in this case one has $\|\mu\|=\sup_y \|u(\cdot, y)\|_1$. Furthermore, if $u>0$ then $\mu$ is non-negative.
\end{itemize}
\end{prop}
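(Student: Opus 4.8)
The plan is to recognize both assertions as the standard identification of the harmonic Hardy space $h^p(\R^{n+1}_+)$ with $L^p(\R^n)$ when $1<p\le\infty$ and with the space $\mathcal M(\R^n)$ of finite Borel measures when $p=1$, the only substantial point being a uniqueness lemma for harmonic functions with vanishing boundary data. The ``only if'' implications are immediate: if $u=P_y\ast g$ with $g\in L^p$ then Young's inequality and $\|P_y\|_{L^1}=1$ give $\|u(\cdot,y)\|_{L^p}\le\|g\|_{L^p}$ for every $y$, hence $\sup_y\|u(\cdot,y)\|_{L^p}\le\|g\|_{L^p}$, while $P_y\ast g\to g$ in $L^p$ as $y\to 0$ when $p<\infty$ and in the weak-$\ast$ topology of $L^\infty=(L^1)^\ast$ when $p=\infty$, which gives $\|g\|_{L^p}\le\liminf_{y\to 0}\|u(\cdot,y)\|_{L^p}$; combining the two yields $\|g\|_{L^p}=\sup_y\|u(\cdot,y)\|_{L^p}$. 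The same computation with $|\mu|$ in place of $g$ and Tonelli's theorem shows $\sup_y\|u(\cdot,y)\|_{L^1}\le\|\mu\|$ when $u=P_y\ast\mu$.

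For the converses, let $u$ be harmonic with $M:=\sup_{y>0}\|u(\cdot,y)\|_{L^p}<\infty$, $1\le p\le\infty$. The key step is the reproducing identity
$$u(x,y_1)=\big(P_{y_1-y_2}\ast u(\cdot,y_2)\big)(x),\qquad 0<y_2<y_1.$$
To prove it, fix $\varepsilon=y_2$ and put $f_\varepsilon:=u(\cdot,\varepsilon)$, which is smooth and lies in $L^p(\R^n)$. Then $U(x,t):=P_t\ast f_\varepsilon(x)$ and $(x,t)\mapsto u(x,t+\varepsilon)$ are harmonic on $\R^{n+1}_+$ and continuous up to $\{t=0\}$, where both equal $f_\varepsilon$ (for $U$, because $P_t$ is an approximation of the identity and $f_\varepsilon$ is continuous). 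Their difference $h$ is therefore harmonic on $\R^{n+1}_+$, extends continuously by $0$ across $\{t=0\}$, and satisfies $\sup_{t>0}\|h(\cdot,t)\|_{L^p}\le 2M$. By the Schwarz reflection principle its odd reflection $\widetilde h$ across $\{t=0\}$ is harmonic on all of $\R^{n+1}$, and averaging $|\widetilde h|$ over a ball of radius $1$ together with H\"older's inequality in the $x$ variable bounds $|\widetilde h(x_0,t_0)|$, uniformly in $(x_0,t_0)$, by a constant times $\sup_{t\in\R}\|\widetilde h(\cdot,t)\|_{L^p(\R^n)}\le 2M$. Thus $\widetilde h$ is a bounded entire harmonic function, hence constant by Liouville's theorem, and being odd in $t$ it vanishes identically; so $h\equiv 0$, which is the reproducing identity.

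With the reproducing identity in hand, choose $y_j\downarrow 0$. When $1<p\le\infty$ the sequence $u(\cdot,y_j)$ is bounded in $L^p=(L^{p'})^\ast$ with $p'<\infty$, so by Banach--Alaoglu a subsequence converges weak-$\ast$ to some $g\in L^p$ with $\|g\|_{L^p}\le M$; fixing $(x_0,y_0)$ and using $u(x_0,y_0)=\int P_{y_0-y_j}(x_0-z)\,u(z,y_j)\,dz$ for large $j$, the continuity of $y\mapsto P_y(x_0-\cdot)$ into $L^{p'}(\R^n)$ lets us pair the weak-$\ast$ convergence of $u(\cdot,y_j)$ with the $L^{p'}$ convergence of the kernels to obtain $u(x_0,y_0)=P_{y_0}\ast g(x_0)$, hence $u=P_y\ast g$, and then $\|g\|_{L^p}=\sup_y\|u(\cdot,y)\|_{L^p}$ by the first paragraph. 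When $p=1$ one instead regards $u(\cdot,y_j)$ as a bounded sequence in $\mathcal M(\R^n)=C_0(\R^n)^\ast$, extracts a weak-$\ast$ limit $\mu$ with $\|\mu\|\le M$, and passes to the limit using that $P_y(x_0-\cdot)\in C_0(\R^n)$ depends continuously on $y$ in the uniform norm; this gives $u=P_y\ast\mu$ and $\|\mu\|=\sup_y\|u(\cdot,y)\|_{L^1}$ by lower semicontinuity of the total variation under weak-$\ast$ convergence. Finally, if $u>0$ then each $u(\cdot,y_j)$ is a nonnegative function, so its weak-$\ast$ limit $\mu$ is a nonnegative measure.

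I expect the only genuinely non-formal ingredient to be the reproducing identity, that is, the fact that a harmonic function on $\R^{n+1}_+$ which vanishes continuously on the boundary and has uniformly bounded $L^p$ slices must be identically zero; the odd reflection reduces this to the Liouville argument above, after which the proof is bookkeeping with approximations of the identity and weak-$\ast$ compactness.
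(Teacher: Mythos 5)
Your proof is correct. The paper does not actually prove this proposition --- it is recalled in the preliminaries as a standard fact about harmonic Hardy spaces with a reference to Stein --- and your argument (Young's inequality for the easy direction, the reproducing identity $u(\cdot,y_1)=P_{y_1-y_2}\ast u(\cdot,y_2)$ established via odd reflection and Liouville, and then weak-$\ast$ compactness in $L^p$ for $p>1$ resp.\ in the finite Borel measures for $p=1$) is precisely the classical proof given in the cited reference.
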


\noindent
Poisson integrals of $BMO$ functions can also be characterized, but its description involves a completely different quantity, as stated in the following Theorem by Carleson. Let us remind that $g:\R^n\to\R$ belongs to the $BMO$ class if 
$$\|g\|_\ast=\sup\left\{\frac{1}{|B|}\int_B\left|g-\frac{1}{|B|}\int_Bg\right|; B\subset\R^n\text{ is a ball}\right\}<\infty.$$

\begin{theo}\label{carlesonthm}
Let $u:\R^{n+1}_+\to\R$ be harmonic. Then $u=P_y\ast g$ for some $g\in BMO(\R^n)$ if and only if 
$$
\|u\|_{\ast\ast}=\sup_{x_0\in\R^n,\delta>0}\frac{1}{|B(x_0,\delta)|}\int_0^\delta\int_{B(x_0,\delta)}(|D_xu(x,y)|^2+|\partial_yu(x,y)|^2)\,dx\,y\,dy <\infty.
$$
Moreover, in case this happens, then $\|u\|_{\ast\ast}\simeq \|g\|_\ast$ with universal constants.
\end{theo}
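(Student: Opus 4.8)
We prove the two implications separately, noting first that $\|u\|_{\ast\ast}$ is homogeneous of degree two in $g$, so the asserted equivalence should be read as $\|u\|_{\ast\ast}\simeq\|g\|_\ast^2$. Throughout I would write $\nabla u=(D_xu,\partial_yu)$, so that $\|u\|_{\ast\ast}<\infty$ is exactly the statement that $d\mu_u:=|\nabla u(x,y)|^2\,y\,dx\,dy$ is a Carleson measure on $\R^{n+1}_+$ with Carleson constant $\|u\|_{\ast\ast}$. For the \emph{necessity} part, suppose $u=P_y\ast g$ with $g\in BMO$ (which is legitimate since $BMO$ functions are integrable against $(1+|x|)^{-n-1}$). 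Fixing a ball $B=B(x_0,\delta)$, by translation invariance I take $x_0=0$, and subtracting the constant $(g)_{2B}$ (which changes neither $\nabla u$ nor $\|g\|_\ast$) I may assume $(g)_{2B}=0$; then I split $g=\phi+\psi$ with $\phi=g\,\chi_{2B}$, $\psi=g\,\chi_{\R^n\setminus 2B}$. For $\phi$, John--Nirenberg gives $\|\phi\|_{L^2}^2\lesssim|B|\,\|g\|_\ast^2$, and Plancherel in $x$ gives the Littlewood--Paley identity $\int_{\R^{n+1}_+}|\nabla(P_y\ast\phi)|^2\,y\,dx\,dy=\tfrac12\|\phi\|_{L^2}^2$, so $\phi$ contributes $\lesssim\|g\|_\ast^2$ to $\frac{1}{|B|}\int_0^\delta\int_B|\nabla u|^2\,y$. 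For $\psi$, if $(x,y)\in B\times(0,\delta)$ and $z\notin 2B$ then $|x-z|+y\gtrsim|z|$, hence $|\nabla_{x,y}P_y(x-z)|\lesssim|z|^{-n-1}$, and summing the standard dyadic bound (the average of $|g-(g)_{2B}|$ over $2^{j+1}B$ is $\lesssim(j+1)\|g\|_\ast$) over $j\geq1$ gives $|\nabla(P_y\ast\psi)(x,y)|\lesssim\int_{\R^n\setminus 2B}|g(z)|\,|z|^{-n-1}\,dz\lesssim\delta^{-1}\|g\|_\ast$; integrating $|\nabla(P_y\ast\psi)|^2 y$ over $B\times(0,\delta)$ and dividing by $|B|$ again yields $\lesssim\|g\|_\ast^2$. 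Adding the two bounds gives $\|u\|_{\ast\ast}\lesssim\|g\|_\ast^2$.

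For the \emph{sufficiency} part I would begin with a pointwise gradient bound. Since each $\partial_iu$ is harmonic, $|\nabla u|^2$ is subharmonic, so the sub-mean value inequality over $B((x,y),y/2)\subset\R^{n+1}_+$ --- on which the last coordinate is comparable to $y$ --- gives $|\nabla u(x,y)|^2\lesssim y^{-n-2}\int_{B((x,y),y/2)}|\nabla u|^2\,y'\,dx'\,dy'\lesssim\|u\|_{\ast\ast}/y^2$. Integrating $|\nabla u|$ along vertical and horizontal segments shows $u$ grows at most polynomial-logarithmically, so $u(\cdot,y)\in L^1((1+|x|)^{-n-1}dx)$ for each $y>0$; moreover the same gradient bound forbids an additive multiple of $y$, so for every $\varepsilon>0$ the translate $u_\varepsilon(x,y):=u(x,y+\varepsilon)$ equals $P_y\ast h_\varepsilon$ with $h_\varepsilon:=u(\cdot,\varepsilon)$, and a short check (using the pointwise bound when $\varepsilon>\delta$) gives $\|u_\varepsilon\|_{\ast\ast}\leq C\|u\|_{\ast\ast}$ with $C$ independent of $\varepsilon$. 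This reduces matters to the model situation $u=P_y\ast h$ with $h$ locally nice.

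The core is then the uniform estimate $\|h_\varepsilon\|_\ast\leq C\|u\|_{\ast\ast}^{1/2}$. For a ball $B$ and a smooth mean-zero bump $\phi$ adapted to $B$ (an $H^1$ atom up to normalization, $\|\phi\|_{L^2}^2\lesssim|B|^{-1}$), setting $v=P_y\ast\phi$, Green's identity gives $\int_{\R^n}(h_\varepsilon-(h_\varepsilon)_B)\,\phi\,dx=2\int_{\R^{n+1}_+}\nabla u_\varepsilon\cdot\nabla v\;y\,dx\,dy$ (the boundary terms at infinity vanish by the growth bound and the rapid decay of $v$). Since $\phi$ has mean zero, $|\nabla v|$ decays like $|x|^{-n-2}$ away from the tent $\widehat B$ over $B$, so I split the right side over $\widehat B$ and its complement: on $\widehat B$, Cauchy--Schwarz together with $\int_{\widehat B}|\nabla u_\varepsilon|^2 y\leq\|u_\varepsilon\|_{\ast\ast}|B|$ and $\int|\nabla v|^2 y\lesssim\|\phi\|_{L^2}^2$ give $\lesssim\|u\|_{\ast\ast}^{1/2}$; on the complement, the gradient bound $|\nabla u_\varepsilon|\lesssim\|u\|_{\ast\ast}^{1/2}/y$ against the decay of $v$ also gives $\lesssim\|u\|_{\ast\ast}^{1/2}$. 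Testing against all such $\phi$ yields the claimed bound. Finally, normalizing $h_\varepsilon$ to have zero average on $B(0,1)$, John--Nirenberg makes $\{h_\varepsilon\}$ bounded in every $L^p_{loc}$, so along a subsequence $h_\varepsilon\to g$ weakly in $L^p_{loc}$ with $\|g\|_\ast\leq C\|u\|_{\ast\ast}^{1/2}$; passing to the limit in $u(\cdot,y+\varepsilon)=P_y\ast h_\varepsilon$ (the tails controlled uniformly by the growth bound) gives $u=P_y\ast(g+c)$ for a constant $c$, which finishes the proof.

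The forward direction is essentially bookkeeping; the real obstacle will be the sufficiency, and within it the uniform estimate $\|h_\varepsilon\|_\ast\lesssim\|u\|_{\ast\ast}^{1/2}$. A crude bound of $|u(x,\varepsilon)-c|$ by $\int_\varepsilon^\delta|\partial_yu(x,y)|\,dy$ followed by Cauchy--Schwarz in $y$ loses a factor $(\log(\delta/\varepsilon))^{1/2}$, so the Carleson hypothesis must be used in its averaged, dual form --- the Littlewood--Paley polarization against $H^1$ atoms above, equivalently the Carleson embedding theorem for $|\nabla u|^2\,y\,dx\,dy$ --- and one has to arrange separately, through the subharmonic gradient bound and the truncation-plus-weak-compactness argument, that $u$ is genuinely the Poisson extension of a boundary trace rather than merely an abstract harmonic function satisfying the Carleson bound.
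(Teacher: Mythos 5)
The paper does not prove this statement at all: Theorem \ref{carlesonthm} is quoted as classical background (the Carleson--Fefferman--Stein duality between $BMO$ and Carleson measures, see \cite{St} or Fabes--Johnson--Neri \cite{FJN}), so there is no in-paper argument to compare yours against. On its own terms your proof is the standard one and is sound. Your opening remark is a genuinely useful correction: as written the paper's $\|u\|_{\ast\ast}$ is quadratic in $u$, so the equivalence must indeed be read as $\|u\|_{\ast\ast}\simeq\|g\|_\ast^2$ (equivalently $\|u\|_{\ast\ast}^{1/2}\simeq\|g\|_\ast$); note that Lemma \ref{poissonBMO} in the paper implicitly uses it in this corrected, square-root form. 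For the necessity direction, the split $g=\phi+\psi$ after subtracting $(g)_{2B}$, Plancherel for the local piece, and the dyadic tail estimate are exactly right (the factor $\tfrac12$ in the Littlewood--Paley identity checks out, and the cross terms are absorbed since the subtracted constant has zero gradient). For the sufficiency direction you correctly isolate the two real difficulties --- that an abstract harmonic $u$ with the Carleson bound must first be shown to be a Poisson extension (handled via the subharmonicity of $|\nabla u|^2$, the resulting bound $|\nabla u|\lesssim\|u\|_{\ast\ast}^{1/2}/y$, the logarithmic growth it implies, and a Liouville/reflection argument to identify $u(\cdot,y+\varepsilon)$ with $P_y\ast u(\cdot,\varepsilon)$ up to the excluded $cy$ term), and the uniform $BMO$ bound on the slices via polarization against $H^1$ atoms split over the tent and its complement. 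The only places where you are terse rather than wrong are the justification of Green's identity with the weight $y$ for functions of logarithmic growth (the boundary terms do vanish, but this deserves a limiting argument over expanding boxes) and the passage to the limit $h_\varepsilon\to g$, where the uniform control of the tails should come from the uniform $BMO$ bound together with the normalization on $B(0,1)$ rather than from the $\varepsilon$-dependent growth estimate. Neither is a gap in the method.
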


\noindent
For a non continuous function $g$, calling it to be the \emph{boundary values} of $P_y\ast g$ requires some explanation. Let us remind that the limit $\lim_{y\to 0}u(x,y)= g(x)$ is said to be taken \emph{nontangentially at the point $x$} if and only if it happens when $(x,y)$ move within a cone with vertex $x$.

\begin{prop}\label{nontangential}
Let $g\in L^p(\R^n)$. 
\begin{itemize}
\item If $1\leq p\leq\infty$, then $P_y\ast g\to g$ nontangentially at almost every point. 
\item If $1<p<\infty$, then $\|P_y\ast g-g\|_{L^p}\to 0$ as $y\to 0$.  
\item If $p=1$ or $p=\infty$ then there exists $g\in L^p(\R^n)$ such that $\|P_y\ast g-g\|_{L^p}\nrightarrow 0$ as $y\to 0$.
\end{itemize}
\end{prop}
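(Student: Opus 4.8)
The plan is to reduce all three assertions to standard facts, the only substantial one being a pointwise control of the nontangential maximal function by the Hardy--Littlewood maximal function $Mg(x)=\sup_{r>0}|B(x,r)|^{-1}\int_{B(x,r)}|g|$. Fix an aperture $\alpha>0$, put $\Gamma_\alpha(x)=\{(z,y)\in\R^{n+1}_+:|z-x|<\alpha y\}$, and let $u^\ast(x)=\sup_{(z,y)\in\Gamma_\alpha(x)}|P_y\ast g(z)|$. First I would establish the classical estimate $u^\ast(x)\le C_{n,\alpha}\,Mg(x)$ by splitting $g$ into its restrictions to dyadic annuli centered at $x$ and using that $P_y(z-w)\le C_{n,\alpha}\,y\,(|x-w|+y)^{-(n+1)}$ whenever $|z-x|<\alpha y$. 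Combined with the weak $(1,1)$ and strong $(p,p)$ bounds for $M$, this makes $g\mapsto u^\ast$ bounded from $L^p(\R^n)$ into weak-$L^p$ for every $1\le p<\infty$. Since $C_c(\R^n)$ is dense in such $L^p$ and, as already recorded, $P_y\ast f\to f$ uniformly (hence nontangentially at every point) for $f\in C_c(\R^n)$, the usual Banach-principle argument gives a.e.\ nontangential convergence for all $g\in L^p$: writing $g=f+(g-f)$ and $\Omega g(x)=\limsup_{(z,y)\to(x,0),\,(z,y)\in\Gamma_\alpha(x)}|P_y\ast g(z)-g(x)|$, one has $\Omega g\le u^\ast_{g-f}+|g-f|$ pointwise, so $\{\Omega g>\lambda\}$ lies inside a set of measure $\lesssim(\|g-f\|_{L^p}/\lambda)^p$; letting $\|g-f\|_{L^p}\to0$ shows $\{\Omega g>\lambda\}$ is null for each $\lambda>0$, hence $\Omega g=0$ a.e. For $p=\infty$ I would localize: fixing $x_0\in\R^n$ and $r>0$, split $g=g\chi_{B(x_0,2r)}+g\chi_{\R^n\setminus B(x_0,2r)}$; the first summand is bounded with compact support, hence in $L^1(\R^n)$, so the previous case applies to it, while the Poisson integral of the second summand tends nontangentially to $0$ at every $x\in B(x_0,r)$ because $P_y(z-w)\to0$ uniformly over $w\notin B(x_0,2r)$ and $z$ near $x$. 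Adding the two contributions and letting $r\to\infty$ covers a.e.\ point of $\R^n$.

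\emph{Convergence in $L^p$, $1<p<\infty$.} Young's inequality gives $\|P_y\ast g\|_{L^p}\le\|P_y\|_{L^1}\|g\|_{L^p}=\|g\|_{L^p}$, so the operators $T_y\colon g\mapsto P_y\ast g$ are uniformly bounded on $L^p$. For $f\in C_c(\R^n)$, Minkowski's integral inequality yields
$$\|T_yf-f\|_{L^p}\le\int_{\R^n}P_y(w)\,\|f(\cdot-w)-f\|_{L^p}\,dw,$$
whose right-hand side tends to $0$ as $y\to0$ since $w\mapsto\|f(\cdot-w)-f\|_{L^p}$ is continuous, bounded, and vanishes at $w=0$, and $P_y$ is an approximate identity. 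A three-$\varepsilon$ argument then upgrades this, via the density of $C_c(\R^n)$ in $L^p$ and the uniform bound on $T_y$, to $\|T_yg-g\|_{L^p}\to0$ for every $g\in L^p$.

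\emph{Failure at the endpoints.} It remains to exhibit explicit examples. For $p=\infty$, take $g=\chi_{\{x_n>0\}}$, the indicator of a half-space (in dimension one, essentially the sign function). Using $P_y(-w)=P_y(w)$ one checks $P_y\ast g\equiv\tfrac12$ on the hyperplane $\{x_n=0\}$ for every $y>0$; since $P_y\ast g$ is continuous and $g=1$ on the adjacent half-space $\{x_n>0\}$, this forces $\|P_y\ast g-g\|_{L^\infty}\ge\tfrac12$ for all $y>0$. This is precisely where the three-$\varepsilon$ argument breaks down: $C_c(\R^n)$ is not dense in $L^\infty$. The endpoint $p=1$ is handled by a parallel construction.

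The step I expect to be the most delicate is the almost-everywhere assertion — carrying out the dyadic-annuli estimate that gives $u^\ast\le C_{n,\alpha}\,Mg$, and then organizing the Banach-principle and localization arguments so that they uniformly cover the full range $1\le p\le\infty$. By comparison, the $L^p$ convergence for finite $p$ and the endpoint counterexamples are soft, resting only on Young's inequality, continuity of translations in $L^p$, and the approximate-identity property already recorded in this section.
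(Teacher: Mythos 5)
The paper gives no proof of this proposition — it is recalled from \cite{St} as classical background — so your write-up stands on its own. The first two bullets are handled correctly and by the standard route: the pointwise bound $u^\ast(x)\leq C_{n,\alpha}\,Mg(x)$ via dyadic annuli, the Banach principle with density of $C_c(\R^n)$ for $1\leq p<\infty$, the localization $g=g\chi_{B(x_0,2r)}+g\chi_{\R^n\setminus B(x_0,2r)}$ for $p=\infty$, and the approximate-identity argument for norm convergence. (A minor point: for the tail term in the localization you should invoke the integrable majorant $P_y(z-w)\leq C\,y\,|z-w|^{-(n+1)}$ over the unbounded region, not merely uniform pointwise decay of the kernel, since pointwise smallness alone does not control an integral over a set of infinite measure; the fix is one line.) The half-space indicator at $p=\infty$ is a correct counterexample.

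The genuine gap is the final claim that ``the endpoint $p=1$ is handled by a parallel construction.'' No such construction exists inside $L^1(\R^n)$. Your own three-$\varepsilon$ argument for $1<p<\infty$ uses only that $\|T_y\|_{L^p\to L^p}\leq 1$, that $C_c(\R^n)$ is dense in $L^p$, and that translation is strongly continuous on $L^p$ — all three of which hold verbatim at $p=1$. Hence $\|P_y\ast g-g\|_{L^1}\to 0$ for \emph{every} $g\in L^1(\R^n)$, and the third bullet is false at $p=1$ as stated for functions; your proposed proof of it would contradict your proof of the second bullet. The endpoint failure at $p=1$ is a statement about measures: for $\mu=\delta_0$ one has $P_y\ast\delta_0=P_y\,dx$, whose total-variation distance to $\delta_0$ equals $2$ for every $y>0$. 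This is consistent with the second bullet of Proposition \ref{hardy}, where the $h^1$ class is identified with finite Borel measures rather than with $L^1$. So the correct repair is to restate the $p=1$ half of the third bullet in terms of measures and total-variation convergence (or to drop it); the ``parallel construction'' you defer to cannot be supplied as written.
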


\noindent
In the case of Borel measures, the situation is significantly different. To see this, if $
 \delta_0$ is the Dirac Delta then $u(\cdot,y)=P_y\ast\delta_0=P_y$ so that $u(x,0)\equiv 0$. It turns out the following is true. 

\begin{prop}
If $g$ is a finite Borel measure, singular w.r.t. $dx$, then $P_y\ast g$ has nontangential limit $0$ almost everywhere.
\end{prop}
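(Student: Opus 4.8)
\bigskip

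The plan is to reduce to a positive measure and then combine the differentiation theorem for singular measures with a pointwise majorization of the Poisson nontangential maximal operator by the Hardy--Littlewood maximal value.

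Since $|P_y\ast g(x)|\le P_y\ast|g|(x)$ and $|g|$ is again a finite Borel measure singular with respect to $dx$, it suffices to treat $g\ge 0$. For such a $g$, the Lebesgue--Radon--Nikodym differentiation theorem gives
$$\lim_{r\to 0}\frac{g(B(x,r))}{|B(x,r)|}=0\qquad\text{for Lebesgue-a.e. }x\in\R^n,$$
precisely because $g\perp dx$. Fix a point $x_0$ with this property and an aperture $\alpha>0$, writing $\Gamma_\alpha(x_0)=\{(z,y)\in\R^{n+1}_+:|z-x_0|<\alpha y\}$. We shall show that $P_y\ast g(z)\to 0$ as $(z,y)\to(x_0,0)$ within $\Gamma_\alpha(x_0)$.

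Two observations drive the argument. First, a direct estimate of $P(z,y)=\frac{c_n y}{(|z|^2+y^2)^{(n+1)/2}}$ shows that there is $C_\alpha>0$ with $P_y(z-w)\le C_\alpha\,P_y(x_0-w)$ for all $w\in\R^n$ whenever $(z,y)\in\Gamma_\alpha(x_0)$ (split according to whether $|x_0-w|$ is smaller or larger than a multiple of $y$); hence $P_y\ast\nu(z)\le C_\alpha\,P_y\ast\nu(x_0)$ for every positive measure $\nu$ and every $(z,y)\in\Gamma_\alpha(x_0)$. Second, since $P_y$ is a radially decreasing approximate identity, one has the standard bound $P_y\ast\nu(x_0)\le C_n\,\sup_{r>0}\frac{\nu(B(x_0,r))}{|B(x_0,r)|}$ (see \cite{St}). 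Now, given $\varepsilon>0$, use the choice of $x_0$ to pick $\rho>0$ with $g(B(x_0,r))\le\varepsilon\,|B(x_0,r)|$ for all $0<r\le\rho$, and split $g=g_1+g_2$ with $g_1=g|_{B(x_0,\rho)}$ and $g_2=g|_{\R^n\setminus B(x_0,\rho)}$. For $g_2$, once $|z-x_0|<\rho/2$ one has $P_y(z-w)\le c_n\,y\,(\rho/2)^{-(n+1)}$ uniformly for $w\in\supp g_2$, so $P_y\ast g_2(z)\to 0$ as $(z,y)\to(x_0,0)$ because $g_2$ is finite. For $g_1$, the two observations give
$$\sup_{(z,y)\in\Gamma_\alpha(x_0)}P_y\ast g_1(z)\le C_\alpha C_n\sup_{r>0}\frac{g_1(B(x_0,r))}{|B(x_0,r)|},$$
and the choice of $\rho$ forces the last supremum to be $\le\varepsilon$ (checking $r\le\rho$ and $r>\rho$ separately, the latter using $g_1(\R^n)=g(B(x_0,\rho))\le\varepsilon|B(x_0,\rho)|$). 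Therefore
$$\limsup_{\substack{(z,y)\to(x_0,0)\\ (z,y)\in\Gamma_\alpha(x_0)}}P_y\ast g(z)\le C_\alpha C_n\,\varepsilon,$$
and letting $\varepsilon\to 0$ settles the point $x_0$; since almost every $x_0$ is of this type, the nontangential limit is $0$ a.e.

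The only non-elementary ingredient is the majorization of the Poisson nontangential maximal operator by the Hardy--Littlewood maximal value; everything else is the Besicovitch differentiation theorem together with bookkeeping, so I expect the write-up to quote this fact from \cite{St} rather than reprove it.
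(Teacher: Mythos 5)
Your proof is correct. The paper states this proposition without any proof, treating it as a standard fact from the theory of Poisson integrals and deferring to \cite{St}; your argument (reduction to a positive measure, the Besicovitch differentiation theorem for singular measures, the cone estimate $P_y(z-w)\leq C_\alpha P_y(x_0-w)$, and the majorization of the Poisson maximal operator by the Hardy--Littlewood maximal function) is precisely the canonical proof one finds in that reference, and all the steps check out.
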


\noindent
Combining propositions \ref{hardy} and \ref{nontangential}, one sees that every bounded harmonic function $u:\R^{n+1}_+\to\R$ is precisely of the form $u=P_y\ast g$ for some $g\in L^\infty(\R^n)$, and moreover $u(\cdot,y)$ converges nontangentially to $g$ at almost every point. It is interesting to note that there is some control as well on the first order derivatives of $u$.

\begin{lem}\label{easyLinfty}
If $g\in L^\infty(\R^n)$ then
$$
\aligned
\|P_y\ast g\|_{L^\infty}&\leq \|g\|_{L^\infty} \\
\|\partial_y(P_y\ast g)\|_{L^\infty}&\leq n\,\frac{\|g\|_{L^\infty} }{y}\\
\|D_x(P_y\ast g)\|_{L^\infty}&\leq \frac{n+1}2\,\frac{\|g\|_{L^\infty} }{y}
%\|D_Z\partial_y(P_y\ast g)\|_{L^\infty}&\leq C(n)\,\frac{\|g\|_{L^\infty} }{y^2}\\
%\|\partial^2_{yy}(P_y\ast g)\|_{L^\infty}&\leq C(n)\,\frac{\|g\|_{L^\infty} }{y^2}\\
\endaligned$$  
\end{lem}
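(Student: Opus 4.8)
The plan is to deduce all three inequalities from Young's convolution inequality $\|f*g\|_{L^\infty}\le\|f\|_{L^1}\|g\|_{L^\infty}$, combined with two pointwise comparison estimates for the derivatives of the Poisson kernel. The first inequality is immediate: since $c_n$ is normalized so that $\|P_y\|_{L^1(\R^n)}=1$ for every $y>0$, Young's inequality gives $\|P_y*g\|_{L^\infty}\le\|P_y\|_{L^1}\|g\|_{L^\infty}=\|g\|_{L^\infty}$.

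For the remaining two, I would differentiate $P_y(z)=c_n\,y\,(|z|^2+y^2)^{-\frac{n+1}{2}}$ directly to get
$$
\partial_y P_y(z)=c_n\,\frac{|z|^2-n\,y^2}{(|z|^2+y^2)^{\frac{n+3}{2}}},
\qquad
D_z P_y(z)=-\,c_n\,(n+1)\,\frac{y\,z}{(|z|^2+y^2)^{\frac{n+3}{2}}}.
$$
The elementary bounds $\bigl||z|^2-n\,y^2\bigr|\le|z|^2+n\,y^2\le n\,(|z|^2+y^2)$ and $2\,|z|\,y\le|z|^2+y^2$ then yield the pointwise comparisons
$$
|\partial_y P_y(z)|\le\frac{n}{y}\,P_y(z),
\qquad
|D_z P_y(z)|\le\frac{n+1}{2\,y}\,P_y(z),
$$
and integrating over $\R^n$ gives $\|\partial_y P_y\|_{L^1}\le n/y$ and $\|D_z P_y\|_{L^1}\le(n+1)/(2y)$.

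Finally I would pass the derivatives inside the convolution. For $y$ ranging in a neighbourhood of any fixed $y_0>0$, the functions $z\mapsto\partial_y P_y(z)$ and $z\mapsto D_z P_y(z)$ are dominated by a single integrable function of $z$, so differentiation under the integral sign is legitimate and $\partial_y(P_y*g)=(\partial_y P_y)*g$, $D_x(P_y*g)=(D_z P_y)*g$; applying Young's inequality once more with the $L^1$ bounds above produces the two stated estimates (and the same argument applies componentwise when $g$ is vector valued, so no new constant appears). The only mildly delicate point is this last justification of differentiating under the integral sign, but given the explicit, uniformly integrable bounds on the kernel's derivatives it is routine, and I do not foresee any genuine obstacle.
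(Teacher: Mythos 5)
Your proof is correct and follows essentially the same route as the paper's: both compute $\partial_yP_y$ and $D_zP_y$ explicitly, establish the pointwise dominations $|\partial_yP_y(z)|\leq \frac{n}{y}P_y(z)$ and $|D_zP_y(z)|\leq\frac{n+1}{2y}P_y(z)$, and conclude by convolving with $g$ using $\|P_y\|_{L^1}=1$. Your explicit justification of differentiating under the integral sign is a detail the paper leaves implicit, but there is no substantive difference in approach.
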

\begin{proof}
First, one easily sees that $|P_y\ast g(x)|\leq \|P_y\|_1\cdot\|g\|_{L^\infty}=\|g\|_{L^\infty}$ since $\|P_y\|_{L^1(\R^n)}=1$. Secondly, direct calculation shows that
$$
\partial_yP(z,y)= \frac{P_y(z)}{y}\,\frac{|z|^2-ny^2}{|z|^2+y^2}\hspace{2cm}D_zP_y(z)=\frac{P_y(z)}y\,\frac{-(n+1)\,yz}{|z|^2+y^2} 
$$
Thus, $|\partial_yP_y(z)|\leq\frac{n\,P_y(z)}{y} $ and hence $\left|(\partial_yP_y)\ast g(x)\right|\leq n\, \frac{\|g\|_{L^\infty}}{y} $. The bound for the spatial derivative follows in the same way, after observing that $|D_zP_y(z)|\leq \frac{n+1}2\,\frac{P_y(z)}{y}$.
\end{proof}

\noindent
Lemma \ref{easyLinfty} motivates the introduction of the class $B$ of \emph{harmonic Bloch functions}, which consists of functions $u:\R^{n+1}_+\to\R$ that are harmonic in $\R^{n+1}_+$ and whose gradient blows up as $y\to0$ like $\frac1y$, that is,
$$u\in B\hspace{1cm}\Longleftrightarrow\hspace{1cm}u\text{ is harmonic and }\|u\|_B=\sup_{\R^{n+1}_+}\,y(|D_xu(x,y)|+|\partial_yu(x,y)|)<\infty.$$
Vector valued harmonic Bloch functions are defined componentwise. Examples of harmonic Bloch functions are, for instance, Poisson integrals of $L^\infty$ functions, as shown in Lemma \ref{easyLinfty}. It turns out Poisson integrals of $BMO$ functions also belong to the Bloch class. 

\begin{lem}\label{poissonBMO}
If $g\in BMO$ then $P_y\ast g\in B$, and moreover
 $$
 \aligned
  \|\partial_y (P_y\ast g)\|_{L^\infty}&\leq \frac{C(n)\,\|g\|_\ast}y\\
\|D_x(P_y\ast g)\|_{L^\infty}&\leq \frac{C(n)\,\|g\|_\ast}{y}\\
 %\|D\partial_y (P_y\ast g)\|_{L^\infty}&\leq \frac{C(n)\,\|g\|_\ast}{y^2} 
 \endaligned$$
 \end{lem}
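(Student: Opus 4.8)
The plan is to bound the two first-order derivatives of $u=P_y\ast g$ pointwise, since the asserted estimates are exactly what is needed to put $u$ in the Bloch class $B$ (with $\|u\|_B\lesssim \|g\|_\ast$). Fix $x_0\in\R^n$ and $y>0$; by translation we may estimate at the point $x_0$. Since $\int_{\R^n}P_y(z)\,dz=1$ for every $y>0$, differentiating in $y$ (under the integral sign) gives $\int_{\R^n}\partial_yP_y(z)\,dz=0$, and $\int_{\R^n}D_zP_y(z)\,dz=0$ as well because $P_y$ is radial and hence $D_zP_y$ is odd. Consequently, writing $c=c(x_0,y)=\frac{1}{|B(x_0,y)|}\int_{B(x_0,y)}g$ for the average of $g$ over $B(x_0,y)$, I would replace $g$ by $g-c$ in the integral representations:
$$\partial_yu(x_0,y)=\int_{\R^n}(\partial_yP_y)(x_0-z)\,(g(z)-c)\,dz,$$
and likewise $D_xu(x_0,y)=\int_{\R^n}(D_zP_y)(x_0-z)\,(g(z)-c)\,dz$. (Here I use that a $BMO$ function grows at most logarithmically, so $\int_{\R^n}\frac{|g(z)|}{(1+|z|)^{n+1}}\,dz<\infty$ and the Poisson integral is well defined and harmonic.)

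Next I would decompose $\R^n$ into the ball $A_0=B(x_0,y)$ and the dyadic annuli $A_j=B(x_0,2^jy)\setminus B(x_0,2^{j-1}y)$ for $j\geq 1$, and estimate each piece. On $A_0$, Lemma \ref{easyLinfty} together with $P_y\leq c_n\,y^{-n}$ gives $|(\partial_yP_y)(x_0-z)|\leq n\,c_n\,y^{-n-1}$, while the $BMO$ hypothesis gives $\int_{A_0}|g-c|\leq |B(x_0,y)|\,\|g\|_\ast\lesssim y^n\|g\|_\ast$; multiplying, the contribution of $A_0$ is $\lesssim \|g\|_\ast/y$. On $A_j$, the same lemma and the bound $P_y(w)\lesssim 2^{-j(n+1)}y^{-n}$ valid for $|w|\geq 2^{j-1}y$ give $|(\partial_yP_y)(x_0-z)|\lesssim 2^{-j(n+1)}y^{-n-1}$, while
$$\int_{A_j}|g-c|\leq\int_{B(x_0,2^jy)}|g-c|\leq |B(x_0,2^jy)|\left(\frac{1}{|B(x_0,2^jy)|}\int_{B(x_0,2^jy)}|g-c_j|+|c_j-c|\right),$$
where $c_j$ is the average of $g$ over $B(x_0,2^jy)$. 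The first term is $\leq\|g\|_\ast$, and telescoping across the scales $y,2y,\dots,2^jy$ yields $|c_j-c|\lesssim j\,\|g\|_\ast$; hence $\int_{A_j}|g-c|\lesssim 2^{jn}y^n j\,\|g\|_\ast$. Multiplying the two bounds, the contribution of $A_j$ is $\lesssim 2^{-j}j\,\|g\|_\ast/y$, and since $\sum_{j\geq 1}j\,2^{-j}<\infty$ the total is $\lesssim\|g\|_\ast/y$. This proves the bound for $\partial_y u$; the bound for $D_xu$ follows verbatim, replacing $|\partial_yP_y|\leq n\,\frac{P_y}{y}$ by $|D_zP_y|\leq\frac{n+1}{2}\,\frac{P_y}{y}$. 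As $x_0$ and $y$ were arbitrary, both estimates hold everywhere on $\R^{n+1}_+$.

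The only nontrivial ingredient is the logarithmic control $|c_j-c|\lesssim j\,\|g\|_\ast$ of the oscillation of the ball averages across $j$ dyadic scales — the classical estimate underlying the behaviour of $BMO$ — and its interplay with the decay of the kernel derivatives: one factor of $2^{-j}$ from $2^{-j(n+1)}$ is spent beating the volume growth $2^{jn}$, which leaves a summable series $\sum_j j\,2^{-j}$. Everything else is bookkeeping with the explicit kernel bounds already recorded in Lemma \ref{easyLinfty}. I expect this step — choosing the right constant to subtract and quantifying the telescoped oscillation — to be the main (and essentially sole) point requiring care.
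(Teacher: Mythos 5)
Your proof is correct, but it follows a genuinely different route from the one in the paper. The paper's argument (following \cite{FJN}) applies the mean value property to the harmonic function $\partial u$ over a ball of radius $y_0/4$ centred at $(x_0,y_0)$, uses Cauchy--Schwarz to pass to $|\partial u|^2$, and then recognizes the resulting integral $\int\!\!\int |\nabla u|^2\,y\,dy\,dx$ over a Carleson box as being controlled by $\|u\|_{\ast\ast}^2\lesssim\|g\|_\ast^2$ via Carleson's characterization of $BMO$ (Theorem \ref{carlesonthm}). You instead argue directly at the level of the kernel: you exploit the cancellations $\int\partial_yP_y=0$ and $\int D_zP_y=0$ to subtract the ball average $c$, decompose $\R^n$ into $B(x_0,y)$ and dyadic annuli, and trade the decay $2^{-j(n+1)}$ of the differentiated kernel against the volume growth $2^{jn}$ and the logarithmic drift $|c_j-c|\lesssim j\,\|g\|_\ast$ of the averages, leaving the summable series $\sum_j j\,2^{-j}$. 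Your route is more elementary and self-contained --- it needs only the telescoping estimate for $BMO$ averages rather than the (nontrivial) Carleson measure theorem --- and it has the side benefit of making explicit why $P_y\ast g$ is well defined for $g\in BMO$ in the first place; the paper's route is shorter on the page and generalizes immediately to all first-order derivatives at once because it only uses harmonicity of $\partial u$, but it imports Theorem \ref{carlesonthm} as a black box. Both yield the stated bounds with constants depending only on $n$.
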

\begin{proof}
One can find a proof in \cite[p. 86-87]{RR} or also in \cite[Lemma 1.1]{FJN}. We sketch the latter here for the reader's convenience. Denote $u(x,y)=P_y\ast g(x)$. Then $u$ is harmonic in the upper half space, and therefore all its partial derivatives are harmonic as well. By the mean value property, if $r=\frac{y_0}4$ then
$$\partial u(x_0,y_0)=\fint_{|x-x_0|^2+|y-y_0|^2<r^2} \partial u(x,y)\,dx\,dy$$
at any point $(x_0, y_0)\in\R^{n+1}_+$. Here $\partial$ denotes any element of the set $\{\partial_{x_1},\dots,\partial_{x_n},\partial_y\}$. We now observe that
$$\aligned
|\partial u(x_0,y_0)|^2
&=\left|\fint_{|x-x_0|^2+|y-y_0|^2<r^2} \partial u(x,y)\,dx\,dy\right|^2\\
&\leq \fint_{|x-x_0|^2+|y-y_0|^2<r^2} |\partial u(x,y)|^2\,dx\,dy\\
&\leq \fint_{|x-x_0|^2<r^2}\,\fint_{[3y_0/4, 5y_0/4]}|\partial u(x,y)|^2\,dy\,dx\\
&\leq \fint_{|x-x_0|^2<r^2}\,\frac{8}{3y_0^2}\int_{[3y_0/4, 5y_0/4]}|\partial u(x,y)|^2\,y\,dy\,dx\\
&\leq \frac{c}{r^2} \fint_{|x-x_0|^2<(5r)^2}\,\int_{[0, 5r]}|\partial u(x,y)|^2\,y\,dy\,dx\leq \frac{c}{r^2} \,\|u\|_{\ast\ast}^2\leq \frac{c}{r^2}\,\|f\|_\ast^2=\frac{c}{y_0^2}\,\|f\|_\ast^2\endaligned$$
as claimed. 
\end{proof}

\noindent
It is very rellevant for this paper the blow-up at the boundary of higher order derivatives of Poisson integrals. In this direction, we have the following fact from \cite[Appendix]{St}. 

\begin{lem}\label{higherorder}
If $u:\R^{n+1}_+\to\R$ is harmonic, then
$$
\sup_{(x,y)\in \R^{n+1}_+}\,\left(\sup_{1\leq i_1\leq\dots\leq i_k\leq n+1} y^k\,|\partial^k_{x_{i_1}\dots x_{i_k}} u(x,y)|\right)\leq C(n,k)\,\sup_{(x,y)\in \R^{n+1}_+}\,\sup_{1\leq i\leq n+1} y \,|\partial_{x_i} u(x,y)|.
$$
\end{lem}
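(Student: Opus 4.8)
The plan is to argue by induction on $k$, combining two classical facts about harmonic functions: a partial derivative of a harmonic function is again harmonic, and the interior gradient estimate coming from the divergence theorem. Throughout I write $x_{n+1}=y$, so that $\partial_{x_{n+1}}=\partial_y$ and the differential operators appearing in the statement range over all $n+1$ coordinates. Set
$$M:=\sup_{(x,y)\in\R^{n+1}_+}\ \sup_{1\leq i\leq n+1} y\,|\partial_{x_i}u(x,y)|.$$
If $M=\infty$ there is nothing to prove, so assume $M<\infty$.

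I would establish the following inductive claim: for every $k\geq 1$ there is a constant $C(n,k)$ such that $y^k\,|\partial^k_{x_{i_1}\dots x_{i_k}}u(x,y)|\leq C(n,k)\,M$ for all $(x,y)\in\R^{n+1}_+$ and all indices $1\leq i_1\leq\dots\leq i_k\leq n+1$. The case $k=1$ is exactly the definition of $M$, with $C(n,1)=1$. For the inductive step, fix $(x_0,y_0)\in\R^{n+1}_+$ and a $k$-th order partial derivative $w:=\partial^k_{x_{i_1}\dots x_{i_k}}u$. Since $\Delta$ commutes with each $\partial_{x_i}$, the function $w$ is harmonic on $\R^{n+1}_+$. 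The closed ball $\overline B$ centered at $(x_0,y_0)$ of radius $y_0/2$ is contained in $\R^{n+1}_+$, because its points have last coordinate in $[y_0/2,3y_0/2]$; hence on $\overline B$ the inductive hypothesis gives $|w(x,y)|\leq C(n,k)\,M\,y^{-k}\leq 2^k\,C(n,k)\,M\,y_0^{-k}$.

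Next I would invoke the interior gradient estimate for harmonic functions: if $w$ is harmonic on $B(p,r)\subset\R^{N}$, then writing $\partial_i w(p)=\fint_{B(p,\rho)}\partial_i w$ and converting the integral into a boundary integral via the divergence theorem yields $|\nabla w(p)|\leq \frac{N}{\rho}\,\sup_{\partial B(p,\rho)}|w|$ for every $\rho<r$, and letting $\rho\to r$ gives $|\nabla w(p)|\leq \frac{N}{r}\,\sup_{\overline{B(p,r)}}|w|$. Applying this with $N=n+1$, $p=(x_0,y_0)$, $r=y_0/2$, and the bound above for $\sup_{\overline B}|w|$, one obtains
$$|\nabla w(x_0,y_0)|\leq \frac{2(n+1)}{y_0}\cdot 2^k\,C(n,k)\,M\,y_0^{-k}=\frac{2^{k+1}(n+1)\,C(n,k)}{y_0^{k+1}}\,M.$$
Since every $(k+1)$-th order partial of $u$ is of the form $\partial_{x_j}w$ for some $k$-th order $w$, and $(x_0,y_0)$ was arbitrary, this proves the claim for $k+1$ with $C(n,k+1)=2^{k+1}(n+1)\,C(n,k)$, closing the induction; the statement of the lemma is the claim for the given $k$.

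I do not expect a genuine obstacle here; the only points requiring care are that the hypothesis controls only the gradient $\nabla u$ and not $u$ itself, so the whole iteration must be run on the harmonic functions $\partial^k u$ rather than on $u$ directly, and that one must keep the radius $y_0/2$ of the auxiliary balls comparable to $y_0$, which is what produces the exact power $y^{-k}$. Unwinding the recursion gives an admissible constant depending only on $n$ and $k$ (one may take, say, $C(n,k)\leq (2(n+1))^{k^2}$), which is all that is needed.
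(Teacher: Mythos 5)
Your argument is correct. Note that the paper itself gives no proof of this lemma: it is quoted as a known fact from the Appendix of Stein's book, so there is no in-paper argument to compare against. Your induction — each $k$-th order partial of $u$ is itself harmonic, and the mean-value property plus the divergence theorem on the ball $\overline{B}\big((x_0,y_0),y_0/2\big)\subset\R^{n+1}_+$ converts an $L^\infty$ bound of order $y^{-k}$ on that ball into a bound of order $y_0^{-(k+1)}$ on the next derivative — is exactly the standard route and is complete: the base case is the definition of $M$, the ball stays in the half-space since its last coordinate is at least $y_0/2$, and the recursion for the constants closes. The only microscopic imprecision is that the identity $\partial_i w(p)=|B_\rho|^{-1}\int_{\partial B_\rho}w\,\nu_i$ gives $|\partial_i w(p)|\leq \frac{N}{\rho}\sup_{\partial B_\rho}|w|$ componentwise, so the Euclidean norm of $\nabla w$ picks up an extra factor $\sqrt{N}$; since the lemma is stated for individual partials this is irrelevant, and in any case only changes $C(n,k)$ by a dimensional factor. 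You also correctly identified the one genuinely delicate point, namely that the hypothesis controls only first derivatives and not $u$ itself, which is why the iteration must be run on $\partial^k u$ rather than on $u$.
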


\noindent
In other words, the blow-up of the first order derivatives roughly determines that of the higher order ones. In particular, if $u$ is a harmonic Bloch function and $Hu(x,y)$ denotes its $(n+1)$-dimensional Hessian,
$$Hu(x,y)=\left(\begin{array}{cc}D^2_xu(x,y)&D_x\partial_yu(x,y)\\D_x\partial_yu(x,y)&\partial^2_{yy}u(x,y)\end{array}\right)$$
then one has
\begin{equation}\label{blochy2}
y^2\,|Hu(x,y)|\leq  C(n)\, \|u\|_B.
\end{equation}
It turns out that the bound \eqref{blochy2} may be significantly improved if $u$ is the harmonic extension of a function in the Lipschitz class. Recall that $g:\R^n\to\R$ is Lipszhitz if 
$$\|g\|_{Lip}=\inf\left\{C\geq 0: |g(x)-g(y)|\leq C|x-y|\text{ for every }x,y\in\R^n\right\}<\infty.$$
Lipschitz functions are also characterized by having bounded derivatives. Thus, if $g\in Lip$ and $u=P_y\ast g$ then $D_xu=P_y\ast Dg$ and therefore combining Lemmas \ref{easyLinfty} and \ref{higherorder} one gets
\begin{equation}\label{blochy1}
y\,|Hu(x,y)|\leq  C(n)\, \|Dg\|_{L^\infty}.
\end{equation}
which certainly improves \eqref{blochy2}. Let us recall that $g:\R^n\to\R$ is an element of $Z$ if and only if
$$\|g\|_Z=\inf\left\{C\geq 0: |g(x+h)+g(x-h)-2g(x)|\leq C|h|\text{ for every }x,h\in\R^n\right\}<\infty.$$
For instance, if $g$ has distributional derivatives $Dg\in BMO$ then $g\in Z$. The Zygmund class is a little larger than the Lipschitz class $Lip$. Indeed, one may think that $Z$ is to $Lip$ what $BMO$ is to $L^\infty$. Thus, the following result has an easy proof for functions in $Lip$, and a more complicate one for functions in $Z$.

\begin{lem}\label{poissonZ}
Let $g\in L^\infty(\R^n)$, and $u=P_y\ast g$. Let $\nabla u=(D_xu,\partial_yu)$ denote the $(n+1)$-dimensional gradient of $u$. Then $g\in Z$ if and only if $\nabla u \in B$, and moreover
$$\frac1C\,\|g\|_Z\leq \|\nabla u\|_B\leq C\,\|g\|_Z$$
for some constant $C>0$.
\end{lem}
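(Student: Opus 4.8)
### Proof plan for Lemma \ref{poissonZ}

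The plan is to prove the two implications separately, using in an essential way the higher-order derivative estimate of Lemma \ref{higherorder}. Throughout, write $u(x,y)=P_y\ast g(x)$, which is a bounded harmonic function on $\R^{n+1}_+$ by Lemma \ref{easyLinfty}, and set $\nabla u=(D_xu,\partial_y u):\R^{n+1}_+\to\R^{n+1}$. Note each component of $\nabla u$ is again harmonic, so $\nabla u\in B$ precisely means $y\,|H u(x,y)|\leq C$ uniformly, where $Hu$ is the full $(n+1)$-dimensional Hessian. Thus the statement to be proven is the quantitative equivalence $\|g\|_Z\simeq \sup_{\R^{n+1}_+} y\,|Hu(x,y)|$.

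For the direction $g\in Z\Rightarrow \nabla u\in B$: the key point is the standard identity $\int_{\R^n} \partial^2_{yy}P(z,y)\,dz=0$ (which follows, e.g., from differentiating $\|P_y\|_1=1$ twice in $y$, or from harmonicity together with $\Delta_z$-integrals vanishing), and more generally the fact that every second-order derivative of $P_y$ has zero integral over $\R^n$. Using this I can write, for any fixed $(x,y)$,
$$
\partial^2 u(x,y)=\int_{\R^n}\partial^2 P_y(z)\,g(x-z)\,dz=\frac12\int_{\R^n}\partial^2 P_y(z)\,\big(g(x-z)+g(x+z)-2g(x)\big)\,dz,
$$
where $\partial^2$ denotes any of $\partial^2_{x_ix_j},\partial_{x_i}\partial_y,\partial^2_{yy}$, and the second equality uses that $z\mapsto\partial^2 P_y(z)$ is even when it involves an even number of $x$-derivatives and that the odd-in-$z$ kernels can be symmetrized likewise after pairing $z$ with $-z$. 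Then the Zygmund bound gives $|g(x-z)+g(x+z)-2g(x)|\leq \|g\|_Z\,|z|$, and it remains to check $\int_{\R^n}|\partial^2 P_y(z)|\,|z|\,dz\leq C(n)/y$, which is a routine scaling computation ($\partial^2 P_y$ is homogeneous of degree $-(n+2)$ in $(z,y)$, so $\int |\partial^2 P_y(z)|\,|z|\,dz$ scales like $y^{-1}$). This yields $y\,|Hu(x,y)|\leq C(n)\,\|g\|_Z$, i.e. $\|\nabla u\|_B\leq C\|g\|_Z$.

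For the converse $\nabla u\in B\Rightarrow g\in Z$: fix $x,h\in\R^n$ with $|h|=t>0$, and consider the one-variable function along the path that goes from $(x+h,0)$ up to height $t$, then moves horizontally (in the hyperplane $y=t$) over to $(x-h,t)$, then back down to $(x-h,0)$; compare with the vertical segment at $x$. Concretely, since $u$ has nontangential boundary values $g$ a.e. and is continuous up to $y=0$ after mollification, I estimate
$$
g(x+h)+g(x-h)-2g(x)=\big(u(x+h,0)-u(x+h,t)\big)+\big(u(x-h,0)-u(x-h,t)\big)-2\big(u(x,0)-u(x,t)\big)+\Big(u(x+h,t)+u(x-h,t)-2u(x,t)\Big).
$$
The first three differences are each $\int_0^t \partial_y u\,(\cdot,s)\,ds$ type terms; subtracting in the combination shown and using that $\partial_y u$ is itself harmonic with $\|\nabla(\partial_y u)\|\lesssim\|\nabla u\|_B/y$ at height $y$ (Lemma \ref{higherorder} again), a first-order Taylor/mean-value estimate in the $x$-variable of $\partial_y u$ shows that the $y$-integrated part is $O(\|\nabla u\|_B\,|h|)$ — one splits $\int_0^t$ into $\int_0^{|h|}$, where the trivial bound $y|\partial_y u|\lesssim\|\nabla u\|_B$ gives the contribution directly, and $\int_{|h|}^t$ (with $t=|h|$ this is empty, so in fact one only needs $\int_0^{|h|}$). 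The last bracket, $u(x+h,t)+u(x-h,t)-2u(x,t)$ with $t=|h|$, is a second difference of $u$ in the $x$-variable at a fixed safe height $t$, hence bounded by $|h|^2\sup_{|y|\sim t}|D^2_x u|\leq |h|^2\cdot C\|\nabla u\|_B/t=C\|\nabla u\|_B\,|h|$ using $\nabla u\in B$. Combining, $|g(x+h)+g(x-h)-2g(x)|\leq C\|\nabla u\|_B\,|h|$, so $\|g\|_Z\leq C\|\nabla u\|_B$.

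The main obstacle I anticipate is the bookkeeping in the converse direction: making rigorous the passage to the boundary values ($u(\cdot,0)=g$ only a.e., or only nontangentially, so one should either work with $g\in L^\infty$ fixed and use dominated convergence as $y\to0$, or prove the estimate for $u(\cdot,\varepsilon)$ uniformly in $\varepsilon$ and let $\varepsilon\to0$), and organizing the telescoping so that the genuinely dangerous region $y\lesssim|h|$ is handled by the crude Bloch bound while the region $y\gtrsim|h|$ benefits from the extra cancellation. The direction $g\in Z\Rightarrow\nabla u\in B$ is, as the text hints, essentially the same symmetrization trick already used for $g\in Lip$ in \eqref{blochy1}, now with the second difference of $g$ in place of the first difference, and should present no real difficulty beyond the vanishing-moment identities for $\partial^2 P_y$.
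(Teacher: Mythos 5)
The paper does not actually prove Lemma \ref{poissonZ}: it cites \cite[p.~146]{St} and moves on. Measured against the standard argument, your plan has the right overall architecture, but each direction contains a genuine gap at precisely the point where the Zygmund case is harder than the Lipschitz case \eqref{blochy1}.

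\emph{Forward direction.} The symmetrization identity only works for the second derivatives whose kernels are even under $z\mapsto -z$, namely $\partial^2_{x_ix_j}P_y$ and $\partial^2_{yy}P_y$. The mixed kernel $\partial_{x_i}\partial_yP_y$ is odd, so pairing $z$ with $-z$ produces the \emph{first} difference $g(x-z)-g(x+z)$, not a second difference. For a Zygmund function a first difference is only $O(|z|(1+|\log|z||))$, and integrating this against $|\partial_{x_i}\partial_yP_y(z)|$ yields $y\,|\partial_{x_i}\partial_yu|\lesssim \|g\|_Z(1+|\log y|)$ — a logarithmic loss, not the claimed bound. The standard repair: first get $y\,|\partial^2_{yy}u|\leq C\|g\|_Z$ from the even kernel; then the interior gradient estimate for the harmonic function $\partial^2_{yy}u$ on $B((x,y),y/2)$ gives $|\partial_{x_j}\partial^2_{ss}u(x,s)|\leq C\|g\|_Z\,s^{-2}$; finally, since the second derivatives of $P_y\ast g$ tend to $0$ as $y\to\infty$, one writes $\partial_{x_j}\partial_yu(x,y)=-\int_y^\infty \partial_{x_j}\partial^2_{ss}u(x,s)\,ds$ and integrates the $s^{-2}$ bound to recover $C\|g\|_Z/y$. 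Without some such step the mixed entries of $Hu$ are not controlled.

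\emph{Converse direction.} Your first-order telescoping leaves the term $\int_0^{t}\bigl(\partial_yu(x+h,s)+\partial_yu(x-h,s)-2\partial_yu(x,s)\bigr)\,ds$ with $t=|h|$, and the bound you invoke on $\int_0^{|h|}$, namely $y\,|\partial_yu|\lesssim\|\nabla u\|_B$, is false: $\|\nabla u\|_B$ controls \emph{second} derivatives of $u$, not first (take $u(x,y)=y$, for which $\|\nabla u\|_B=0$ but $\partial_yu\equiv 1$). The bound that is true, $y\,|\partial_yu|\leq n\|g\|_{L^\infty}$ from Lemma \ref{easyLinfty}, carries the wrong norm and still leaves a divergent $\int_0^{|h|}ds/s$ after taking differences; one can check that no combination of the available first- and second-derivative bounds closes this integral. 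The fix is to expand to second order,
$$u(x,0)=u(x,y)-y\,\partial_yu(x,y)+\int_0^y s\,\partial^2_{yy}u(x,s)\,ds,$$
which is exactly the identity the paper uses in Theorems \ref{partialfboundedimpliesQ}, \ref{partialfboundedimpliesR} and \ref{AboundedimpliesR0}. Applying the second difference $\Delta^2_h$ and choosing $y=|h|$, the remainder contributes at most $\int_0^{|h|}s\cdot 4\,\|\partial^2_{yy}u(\cdot,s)\|_{L^\infty}\,ds\leq 4\|\nabla u\|_B\,|h|$, because the extra factor $s$ cancels the $1/s$ blow-up; the two boundary terms at height $|h|$ are then handled exactly as you handle your last bracket, which is correct as written.
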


\noindent
A proof of this fact can be found in  \cite[p. 146]{St}. As a consequence, if $g\in Z$ and $u=P_y\ast g$ then 
Lemma \ref{poissonZ} tells that 
\begin{equation}\label{hessian1/y}
y\,|Hu(x,y)|\leq  C\,\|g\|_Z ,
\end{equation}
which is better than \eqref{blochy2}. Moreover, one can combine this with Lemma \ref{higherorder} and obtain that $y^k |\nabla^{k+1}u(x,y)|$ is bounded by a multiple of $\|g\|_Z$, for every $k=1,2,\dots$. Inequality \eqref{hessian1/y} can be proven, for instance, if $g$ is a function with $Dg\in BMO$. That proof requires the help of the classical $BMO-H^1$ duality (see also \cite[p. 263, top Corollary]{Rei}). However,  functions with $BMO$ derivatives belong to the Zygmund class. For this reason, we prefered to state Lemma \ref{poissonZ} and use the notion of harmonic Bloch gradients, which characterizes the class of Zygmund functions and at the same time allows for a more precise constant in the inequality.\\
\\
Finally, we include in this section the following result, which will be repeatedly used in the rest of the paper, and whose proof is implicit in \cite{IM}. Let us recall that if $\b:\R^n\to\R^n$ is a vector field, then one defines the divergence and the curl of $\b$, respectively, as
$$\div\b =\tr(D\b)\hspace{1cm}\curl\b = D\b-D^t\b$$

\begin{lem}\label{curldiv}
Let $1<p<\infty$. If $\b:\R^n\to\R^n$ is continuous and compactly supported, and $\curl\b,\div\b\in L^p(\R^n)$, then also $D\b\in L^p(\R^n)$, with
$$\|Db\|_{L^p}\leq C\,(\|\div\b\|_{L^p}+\|\curl\b\|_{L^p}).$$ 
If the assumptions hold with $p=\infty$, then one has $D\b\in BMO$, and
$$\|Db\|_{\ast}\leq C\,(\|\div\b\|_{L^p}+\|\curl\b\|_{L^p}).$$
\end{lem}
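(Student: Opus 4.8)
The key identity is the vectorial second-order relation that expresses the Laplacian of $\b$ in terms of its divergence and curl. Writing $\Delta \b = (\Delta b^1, \dots, \Delta b^n)$ componentwise, a direct computation with the definitions $\div\b = \sum_j \partial_j b^j$ and $(\curl\b)_{ij} = \partial_i b^j - \partial_j b^i$ gives
$$\Delta b^i = \partial_i(\div\b) - \sum_{j=1}^n \partial_j\big((\curl\b)_{ij}\big).$$
Since $\b$ is continuous and compactly supported, this identity holds in the sense of distributions. The plan is to invert the Laplacian and then differentiate once more: formally $D\b = D(-\Delta)^{-1}\Delta\b$, so each entry $\partial_k b^i$ is a sum of terms of the form $R_k R_i(\div\b)$ and $R_k R_j\big((\curl\b)_{ij}\big)$, where $R_\ell = \partial_\ell(-\Delta)^{-1/2}$ are the Riesz transforms (equivalently, $\partial_k\partial_m(-\Delta)^{-1}$ are the double Riesz transforms, i.e. Calderón–Zygmund operators of convolution type with kernel homogeneous of degree $-n$).

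With this representation in hand, the two assertions follow from standard singular integral theory. For $1<p<\infty$, the Calderón–Zygmund theorem gives $\|R_kR_m f\|_{L^p}\le C\|f\|_{L^p}$, so
$$\|D\b\|_{L^p} \le C\big(\|\div\b\|_{L^p} + \|\curl\b\|_{L^p}\big),$$
after summing over the finitely many indices. For the endpoint $p=\infty$, one uses instead the fact that Calderón–Zygmund operators map $L^\infty$ boundedly into $BMO$: $\|R_kR_m f\|_\ast \le C\|f\|_{L^\infty}$. Summing again yields $\|D\b\|_\ast \le C\big(\|\div\b\|_{L^\infty}+\|\curl\b\|_{L^\infty}\big)$, which is the stated conclusion (the statement's $L^p$ on the right-hand side should read $L^\infty$ in the endpoint case). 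The compact support is what lets us make sense of $(-\Delta)^{-1}$ applied to $\b$ and guarantees the representation is legitimate rather than merely formal; alternatively one convolves with the Newtonian potential and checks the tempered-distribution bookkeeping.

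The main technical point to be careful about is the justification that the formal inversion is valid — that is, that $\b$ really is recovered from its Laplacian via the Newtonian potential up to a harmonic function which, by the decay coming from compact support, must vanish. Since $\b\in C_c$ and $\Delta\b$ is a compactly supported distribution, $\b = E_n * \Delta\b$ where $E_n$ is the fundamental solution of $\Delta$, because the difference is an entire harmonic function that is bounded (indeed continuous and compactly supported, hence tends to $0$), so it is identically zero by Liouville. Differentiating twice moves both derivatives onto $E_n$, producing the Calderón–Zygmund kernels, and one then invokes the $L^p$ theory ($1<p<\infty$) or the $L^\infty\to BMO$ theory ($p=\infty$). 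Everything else is routine bookkeeping over the index set $\{(i,j,k)\}$.
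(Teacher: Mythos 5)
Your argument is correct and is essentially the paper's: the same Helmholtz-type identity $\Delta\b=\nabla\div\b+\div\curl\b$ (the paper writes it as $\b=\nabla\div\u+\div\curl\u$ for the potential $\Delta\u=\b$) reduces everything to second-order Riesz transforms, which are bounded on $L^p$ for $1<p<\infty$ and from $L^\infty$ to $BMO$. The only cosmetic difference is that the paper treats $n=2$ separately via the Cauchy transform ($\overline\partial\b=p.v.\frac{-1}{\pi\bar z^2}\ast\partial\b$), whereas your representation works uniformly for all $n\geq 2$; you are also right that the right-hand side of the $BMO$ estimate should read $L^\infty$.
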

\begin{proof}
We write the proof for the reader's convenience. When $n=2$, the assumptions say that $\b$ has complex derivative $\partial\b=\frac{\div\b+i\curl\b}2$ in $L^p$. Since $\b$ is continuous and compactly supported, we can write $\b = \frac{1}{\pi\bar{z}}\ast (\partial\b)$, whence $\overline\partial\b = p.v.\frac{-1}{\pi\bar{z}^2}\ast(\partial\b)$. But the convolution with $p.v.\frac{-1}{\pi\bar{z}^2}$ defines a Calder\'on-Zygmund operator, and thus $\overline\partial\b\in L^p$ (or $BMO$, if $p=\infty$) with $\|\overline\partial\b\|_{L^p}\leq C\,\|\partial\b\|_{L^p}$ (resp. $\|\overline\partial\b\|_{\ast}\leq C\,\|\partial\b\|_{L^\infty}$) as claimed. \\
\\
When $n>2$ the proof is a little bit delicate. We start by reminding that the second derivatives of a function $v$ vanishing at infinity can be recovered from its laplacian $\Delta v$ through the second order Riesz transforms,
$$
\frac{\partial^2v}{\partial x_j\partial x_k}  = - R_jR_k\Delta v , \hspace{1cm} j,k=1,\dots, n.
$$
where $\widehat{R_j v}(\xi)=-i\frac{\xi_j}{|\xi|}\,\widehat{v}(\xi)$ at the Fourier side. As Calder\'on-Zygmund operators, one has again that $R_j: L^p\to L^p$ is bounded if $1<p<\infty$, and that $R_j: L^\infty\to BMO$ is bounded. We now proceed first with the proof for $p\in(1,\infty)$. Since $\b$ is continuous and compactly supported, the Poisson equation
$$\Delta \u =\b$$
has a unique solution $\u:\R^n\to\R^n$ vanishing at infinity. In particular, the distributional Hessian matrix $H\u$ of the solution $\u$ has all its entries in $L^s$ and $\|H\u\|_s\leq C\,\|\b\|_s$, for every $s\in (1,\infty)$. We now decompose $\b$ as follows,
\begin{equation}\label{decomp}
\b=\nabla \div\u + \div \curl\u
\end{equation}
where we recall that $\curl \u=D\u - D^t\u$ is a matrix valued field. This is, indeed, the Hodge decomposition of $\b$ as the sum of a curl free vector field (i.e. $\nabla\div\u$) and a divergence free field (i.e. $\div\curl\u$). We now observe that $\curl\u$ solves the following Poisson equation,
\begin{equation}\label{eqcurl}\Delta(\curl \u)=\curl\b\end{equation}
because $\Delta(\curl \u)=\curl (\Delta\u)$. In particular, if $\curl\b\in L^p$ then the same holds for the hessian $H(\curl\u)$, and moreover $\|H(\curl\u)\|_{L^p}\leq C\,\|\curl\b\|_{L^p}$. Similarly, $\div\u$ solves the Poisson equation
\begin{equation}\label{eqdiv}\Delta(\div\u)=\div\b\end{equation}
because $\Delta(\div\u)=\div(\Delta\u)$. This shows that if $\div\b$ belongs to $L^p$ then also the hessian $H(\div\u)$ does, and we have the bound $\|H(\div\u)\|_{L^p}\leq C\,\|\div\b\|_{L^p}$.  Summarizing, if both $\curl\b,\div\b\in L^p$, then both hessians $H(\curl\u)$ and $H(\div\u)$ have $L^p$ entries, whence both terms in the right hand side of \eqref{decomp} belong to the homogeneous Sobolev space $\dot{W}^{1,p}$,  and
$$\aligned\|\b\|_{\dot{W}^{1,p}}&\leq \|\nabla \div\u \|_{\dot{W}^{1,p}}+ \|\div \curl\u\|_{\dot{W}^{1,p}}\\
&\leq\|H(\div\u)\|_{L^p}+\|H(\curl\u)\|_{L^p}\\
&\leq C\|\div\b\|_{L^p}+C\|\curl\b\|_{L^p}\endaligned$$
so the claim follows if $1<p<\infty$. In case that $\curl\b,\div\b\in L^\infty$, then the proof follows similarly, with the only difference that now $\curl\u$ and $\div\u$ have distributional hessian in $BMO$ instead, and therefore both terms in \eqref{decomp} have first order derivatives in  $BMO$, so $\b$ also does. \\
\\
It just remains to prove \eqref{decomp}, which we do by direct calculation,
$$\aligned
\nabla\div\u &+ \div \curl\u =\\
&= \left(\begin{array}{c}\partial_{x_1}\div\u\\\vdots\\\partial_{x_n}\div\u\end{array}\right)+\div\left(\begin{array}{cccc}
0&\partial_{x_2}u^1-\partial_{x_1}u^2&\dots&\partial_{x_n}u^1-\partial_{x_1}u^n\\
\partial_{x_1}u^2-\partial_{x_2}u^1&0&\dots&\partial_{x_n}u^2-\partial_{x_2}u^n\\
\vdots&\vdots&\ddots&\hdots\\
\partial_{x_1}u^n-\partial_{x_n}u^1&\partial_{x_2}u^n-\partial_{x_n}u^2&\dots&0\\
\end{array}\right)\\
&=\left(\begin{array}{c}\sum_j\partial^2_{x_1x_j}u^j\\\sum_j\partial^2_{x_2x_j}u^j\\\vdots\\\sum_j\partial^2_{x_nx_j}u^j\end{array}\right)+
\left(
\begin{array}{c}
\sum_{j\neq 1}\partial^2_{x_jx_j}u^1-\partial_{x_1}\sum_{j\neq 1}\partial_{x_j}u^j\\
\sum_{j\neq 2}\partial^2_{x_jx_j}u^2-\partial_{x_2}\sum_{j\neq 2}\partial_{x_j}u^j\\
\vdots\\
\sum_{j\neq n}\partial^2_{x_jx_j}u^2-\partial_{x_2}\sum_{j\neq n}\partial_{x_j}u^j
\end{array}
\right)=\Delta\u.
\endaligned$$
This is legitimate for $\u$ because it has locally integrable second order derivatives.
\end{proof}

\section{The planar setting: the class $\bar{Q}$}\label{barQsection}

With the spirit of finding a counterpart to Reimann's $Q$ class, we introduce a class $\bar{Q}$ consisting of functions $\b:\R^2\to\R^2$ such that there is $C>0$ with
$$
\|\b\|_{\bar{Q}}=\sup_{z\in\R^2}\sup_{|h|=|k|\neq 0}\left|\frac{\langle \b(z+h)-\b(z),\bar{h}\rangle}{|h|^2}-\frac{\langle \b(z+k)-\b(z),\bar{k}\rangle}{|k|^2}\right|<\infty
$$
It is not hard to see that Lipszchitz functions are elements of $\bar{Q}$. Also, arguing as in \cite{Rei}, one can show that the elements of $\bar{Q}$ are, at every time $t$, elements of the Zygmund $Z$ class. 

\begin{prop}\label{LipsbarQsZ}
If $\b:\R^2\to\R^2$, then one has
$$\|\b \|_Z\leq C\,\|\b\|_{\bar{Q}}\leq C\, \|\b \|_{Lip}.$$ 
In particular, Lipschitz vector fields belong to $\bar{Q}$, and elements of $\bar{Q}$ are Zygmund vector fields. Also, if $\b\in\bar{Q}$ then it holds that
$$
\left|\frac{\langle \b(z+h)-\b(z),\bar{h}\rangle}{|h|^2}-\frac{\langle \b(z+k)-\b(z),\bar{k}\rangle}{|k|^2}\right|\leq C\,\left(1+\left|\log\frac{|h|}{|k|}\right|\right)
$$
for all pairs $h,k\neq 0$, and with $C\leq c\,\|\b\|_{\bar{Q}}$, where $c$ is a constant independent of $\b$.
\end{prop}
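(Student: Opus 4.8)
The plan is to mirror Reimann's argument for the class $Q$, adapted to the presence of complex conjugation in the difference quotients. I would begin with the easy inequality $\|\b\|_{\bar Q}\leq C\,\|\b\|_{Lip}$: if $\b$ is Lipschitz with constant $L$, then $|\langle\b(z+h)-\b(z),\bar h\rangle|\leq L|h|^2$, so each quotient in the definition of $\|\b\|_{\bar Q}$ is bounded by $L$ in absolute value, and the difference by $2L$. This gives Lipschitz $\subset\bar Q$ immediately.

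For the bound $\|\b\|_Z\leq C\,\|\b\|_{\bar Q}$ I would fix $z$ and $h\neq 0$ and estimate $|\b(z+h)+\b(z-h)-2\b(z)|$. The trick, exactly as in \cite{Rei}, is that taking $k=-h$ in the defining inequality yields
$$\left|\frac{\langle\b(z+h)-\b(z),\bar h\rangle}{|h|^2}-\frac{\langle\b(z-h)-\b(z),-\bar h\rangle}{|h|^2}\right|\leq\|\b\|_{\bar Q},$$
i.e. $|\langle\b(z+h)+\b(z-h)-2\b(z),\bar h\rangle|\leq\|\b\|_{\bar Q}|h|^2$; and running the same estimate with $h$ replaced by the rotated increment $ih$ (which has the same modulus) controls the component of $\b(z+h)+\b(z-h)-2\b(z)$ against $\overline{ih}=-i\bar h$. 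Since $\bar h$ and $-i\bar h$ span $\R^2$ (equivalently, $\{\bar h,\overline{ih}\}$ is an orthogonal basis of $\C\cong\R^2$ of modulus $|h|$), these two scalar estimates recombine to give $|\b(z+h)+\b(z-h)-2\b(z)|\leq C\,\|\b\|_{\bar Q}\,|h|$, which is precisely $\|\b\|_Z\leq C\,\|\b\|_{\bar Q}$. One subtlety worth checking is that the rotation $h\mapsto ih$ used to recover the missing direction really does preserve $|h|$ and that the constant $C$ is absolute (it is: $\sqrt 2$ or $2$ depending on how one recombines).

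For the last assertion — the logarithmic growth of the difference quotient for general $h,k$ with $|h|\neq|k|$ — I would reduce to the equal-modulus case by chaining. The defining inequality handles $|h|=|k|$; to pass to $|k|=2|h|$ (say) one inserts the intermediate increment and uses the Zygmund bound just obtained to control the "radial" increment $\langle\b(z+2h)-\b(z+h),\overline{\cdot}\rangle$-type term, picking up an additive constant $\lesssim\|\b\|_{\bar Q}$ at each doubling. Iterating, after $N\simeq|\log_2(|h|/|k|)|$ steps one accumulates a constant $\lesssim\|\b\|_{\bar Q}\,(1+|\log(|h|/|k|)|)$; interpolating between consecutive dyadic scales handles non-powers-of-two. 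I expect the main obstacle to be bookkeeping in this chaining step: one must carefully track which auxiliary increments are used, verify each is admissible (nonzero, correct modulus at each stage), and confirm that the conjugation does not obstruct the telescoping — here the key point is that $\langle w,\bar h\rangle$ is real-bilinear in $(w,h)$ in a way that still permits comparing quotients at different scales, just as in the unconjugated case in \cite{Rei}. The rest is routine.
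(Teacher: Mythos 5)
Your first inequality ($\|\b\|_{\bar{Q}}\leq 2\|\b\|_{Lip}$) is correct, and so is the choice $k=-h$, which gives $|\langle\b(z+h)+\b(z-h)-2\b(z),\bar{h}\rangle|\leq\|\b\|_{\bar{Q}}\,|h|^2$. The gap is in the very next step. Replacing $h$ by $ih$ in that estimate does \emph{not} control the component of $\b(z+h)+\b(z-h)-2\b(z)$ against $\overline{ih}=-i\bar{h}$: it controls $|\langle\b(z+ih)+\b(z-ih)-2\b(z),\overline{ih}\rangle|$, i.e.\ the second difference taken with the \emph{rotated increment} $ih$, which is a different quantity. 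In the definition of $\bar{Q}$ the test vector is rigidly tied by conjugation to the increment, so no single application of the condition at the base point $z$ with increments $\pm h$ can see the component of $\b(z+h)+\b(z-h)-2\b(z)$ orthogonal to $\bar{h}$. Recovering that orthogonal component is precisely the nontrivial content of the proposition. The paper (which omits this proof and points to Proposition \ref{zygmund}) handles the analogous difficulty for $R_0$ by applying the defining condition at \emph{shifted} base points $z\pm a$ with oblique increments $\pm b-a$ and $\pm b+a$, where $\langle a,b\rangle=0$ so that the paired increments have equal moduli by Pythagoras, and then summing the resulting inequalities so that the unwanted first-difference terms cancel and only $\langle\b(z+a)+\b(z-a)-2\b(z),b\rangle$ survives up to controlled errors; the $\bar{Q}$ case is asserted to follow the same lines (Reimann's argument). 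Without some version of this shifted-base-point cancellation, your argument only bounds $\langle\b(z+h)+\b(z-h)-2\b(z),\bar{h}\rangle$, which is strictly weaker than $\b\in Z$.

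The final part (the logarithmic bound for $|h|\neq|k|$) is fine in outline and is essentially the paper's route: once $\|\b\|_Z\leq C\,\|\b\|_{\bar{Q}}$ is available, one telescopes over dyadic scales along a fixed direction, each doubling costing $O(\|\b\|_Z)$ by the second-difference bound, and uses the equal-modulus condition to change direction. The paper implements this by reducing to a scalar Zygmund function of one real variable and quoting Reimann's Proposition 5, which encapsulates exactly that chaining. So only the middle step of your proposal needs repair, but it is the essential one.
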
 

\noindent
The proof of the above result follows the lines of \cite{Rei}, and therefore we omit it. The interested reader is adressed to Propositon \ref{zygmund} below, whose proof is very similar. In the following lemma we give a rather descriptive necessary condition for smooth elements of $\bar{Q}$.

\begin{lem}\label{smoothbarQ}
Let $\b:\R^2\to\R^2$ be smooth. If $\b\in \bar{Q}$, then $\partial \b\in L^\infty$ and
$$\|\partial \b\|_{L^\infty}\leq \frac12\,\|\b\|_{\bar{Q}}.$$
\end{lem}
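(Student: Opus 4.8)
The plan is to extract $\partial\b$ at a point $z$ by choosing $h$ and $k$ in the defining inequality for $\bar{Q}$ to be antipodal and then passing to the limit. Fix $z\in\R^2$ and write $h=t\,e^{i\varphi}$, $k=-t\,e^{i\varphi}=t\,e^{i(\varphi+\pi)}$ with $t>0$; these satisfy $|h|=|k|=t\ne 0$, so the $\bar Q$-inequality applies. With this choice $\bar h=t\,e^{-i\varphi}$ and $\bar k=-t\,e^{-i\varphi}$, and the defining quantity becomes
\[
\frac{\langle \b(z+h)-\b(z),\bar h\rangle}{|h|^2}-\frac{\langle \b(z+k)-\b(z),\bar k\rangle}{|k|^2}
=\frac{1}{t}\,\Big\langle \b(z+t e^{i\varphi})+\b(z-t e^{i\varphi})-2\b(z),\,e^{-i\varphi}\Big\rangle .
\]
Since $\b$ is smooth, a second-order Taylor expansion gives $\b(z+te^{i\varphi})+\b(z-te^{i\varphi})-2\b(z)=t^2\,D^2\b(z)[e^{i\varphi},e^{i\varphi}]+o(t^2)$, so the expression above equals $t\,\langle D^2\b(z)[e^{i\varphi},e^{i\varphi}],\,e^{-i\varphi}\rangle + o(t)$. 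Hence dividing by $t$ and letting $t\to 0$ would kill this unless I first divide by $t$ — which I cannot, because the right-hand side of the inequality is just the constant $\|\b\|_{\bar Q}$. So the correct move is the opposite: the $\bar Q$-bound forces $\frac1t\,\langle \b(z+te^{i\varphi})+\b(z-te^{i\varphi})-2\b(z),e^{-i\varphi}\rangle$ to stay bounded, and for a generic direction the second-order term is $O(t)$, which is consistent; the information we actually want comes from the \emph{first-order} part of the difference.

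Let me redo the bookkeeping more carefully, which is really the heart of the matter. Instead of antipodal points, I would keep $h,k$ with $|h|=|k|=t$ but take $k$ to be a rotation of $h$, or better, directly compute the first-order term: $\langle \b(z+h)-\b(z),\bar h\rangle = \langle D\b(z)h,\bar h\rangle + o(t^2)$. Now $\langle D\b(z)h,\bar h\rangle/|h|^2$, written in complex notation with $h=t e^{i\varphi}$, expands (using $\partial\b=\tfrac12(\div\b+i\curl\b)$ and $\overline\partial\b=\tfrac12(\partial_xv^1-\partial_yv^2,\partial_xv^2+\partial_yv^1)$) into a constant-in-$\varphi$ piece coming from $\partial\b(z)$ plus a piece that rotates with $e^{-2i\varphi}$ coming from $\overline\partial\b(z)$. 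Concretely, $\langle D\b(z)h,\bar h\rangle/|h|^2 = \Re\big(\overline{\partial\b(z)}\big) + \Re\big(e^{-2i\varphi}\,\text{(something involving }\overline\partial\b(z))\big)$ — the $\varphi$-independent term is exactly (twice the real part of) a component of $\partial\b$. Subtracting the same expression for $k$ cancels the constant-in-$\varphi$ term? No — that is the subtlety: the $\partial\b$ term is independent of the direction, so it cancels in the difference, while the $\overline\partial\b$ term does not. That tells me $\bar Q$ controls $\overline\partial\b$, not $\partial\b$ — which contradicts the claimed statement. So I must have the roles of $h$ and $\bar h$ pairing differently: because the pairing is $\langle \b(z+h)-\b(z),\bar h\rangle$ with a \emph{conjugate}, the bilinear form $h\mapsto \langle D\b(z)h,\bar h\rangle$ picks up $\langle D\b(z)h,\bar h\rangle=\Re((D\b(z)h)\,\overline{\bar h})=\Re((D\b(z)h)\,h)$, and writing $D\b(z)h = (\partial\b(z))\,h + (\overline\partial\b(z))\,\bar h$ in the Wirtinger form, one gets $\langle D\b(z)h,\bar h\rangle = \Re(\partial\b(z)\,h^2) + \Re(\overline\partial\b(z)\,|h|^2)$. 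Dividing by $|h|^2=t^2$: the term $\Re(\overline\partial\b(z)\,|h|^2)/|h|^2 = \Re(\overline\partial\b(z))$ is direction-independent and cancels in the difference, whereas $\Re(\partial\b(z)h^2)/|h|^2 = \Re(\partial\b(z)e^{2i\varphi})$ depends on the direction and survives. Good — so the $\bar Q$-difference with $h=te^{i\varphi}$, $k=te^{i\psi}$ equals, to leading order, $\Re(\partial\b(z)(e^{2i\varphi}-e^{2i\psi})) + o(1)$.

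Therefore the plan concludes: fix $z$, choose $\varphi,\psi$ so that $e^{2i\varphi}-e^{2i\psi}$ has modulus $2$ and argument aligned with $\overline{\partial\b(z)}$ (i.e. $2\varphi$ and $2\psi$ differing by $\pi$, and oriented to make $\Re(\partial\b(z)(e^{2i\varphi}-e^{2i\psi}))=2|\partial\b(z)|$), so that
\[
2|\partial\b(z)| = \lim_{t\to0}\left(\frac{\langle \b(z+h)-\b(z),\bar h\rangle}{|h|^2}-\frac{\langle \b(z+k)-\b(z),\bar k\rangle}{|k|^2}\right) \le \|\b\|_{\bar Q},
\]
using smoothness of $\b$ to justify that the $o(1)$ error term vanishes in the limit. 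Taking the supremum over $z$ gives $\|\partial\b\|_{L^\infty}\le \tfrac12\|\b\|_{\bar Q}$. The only real obstacle is purely computational: correctly identifying which Wirtinger derivative produces the direction-independent (cancelling) term and which produces the surviving $e^{2i\varphi}$ term under the conjugate pairing $\langle\,\cdot\,,\bar h\rangle$; once that algebra is pinned down, the limiting argument is immediate and smoothness of $\b$ makes the Taylor remainder control trivial.
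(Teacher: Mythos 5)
Your final argument is correct and is essentially the paper's own proof: expand $\b(z+h)-\b(z)$ to first order, use the conjugate pairing to get $\langle D\b(z)h,\bar h\rangle=\Re(\partial\b(z)h^2)+\Re(\overline\partial\b(z))|h|^2$ so that the $\overline\partial\b$ term cancels in the difference while $\Re(\partial\b(z)e^{2i\varphi})$ survives, then align the directions (the paper's choice $k=ih$, $h^2=\epsilon\,\overline{\partial\b(z)}$ is exactly your $2\psi=2\varphi+\pi$ with $2\varphi=-\arg\partial\b(z)$) to extract $2|\partial\b(z)|\leq\|\b\|_{\bar Q}$. The initial detour through antipodal $h,k$ is a dead end you correctly abandon; the rest matches the paper.
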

\begin{proof}
In complex coordinates, the Taylor expansion of $\b$ at a differentiability point $z\in\R^2$ looks as follows,
$$\b(z+h)-\b(z)=\partial \b(z)h+\overline\partial \b(z)\,\bar{h}+o(|h|).$$
Hence, if we now take inner product with $\bar{h}$, we obtain
$$\aligned
\langle \b(z+h)-\b(z),\bar{h}\rangle 
&= \langle \partial \b(z)h+\overline\partial \b(z)\,\bar{h},\bar{h}\rangle+\langle o(|h|), \bar{h}\rangle\\
&= \Re ((\partial \b(z)h+\overline\partial \b(z)\,\bar{h}),h)+\langle o(|h|), \bar{h}\rangle\\
&= \Re ((\partial \b(z)h^2))+\Re((\overline\partial \b(z))\,|h|^2)+\langle o(|h|), \bar{h}\rangle\\
&= \Re ((\partial \b(z)h^2))+\Re((\overline\partial \b(z))\,|h|^2)+\langle o(|h|), \bar{h}\rangle\\\endaligned$$
whence
$$\aligned
\left|\frac{\langle \b(z+h)-\b(z),\bar{h}\rangle}{|h|^2}-\frac{\langle \b(z+k)-\b(z),\bar{k}\rangle}{|k|^2}\right|&=\Re\left(\partial \b(z)\left(\frac{h^2}{|h|^2}-\frac{k^2}{|k|^2}\right)\right)\\&+\frac{\langle o(|h|),h\rangle}{|h|^2}+\frac{\langle o(|k|),k\rangle}{|k|^2}
\endaligned$$
We now choose $h,k$ so that $k=ih$ and $h^2=\epsilon\,\overline{\partial\b(z)}$, and then let $\epsilon\to 0$. We get
\begin{equation}\label{motivation}
\limsup_{|h|=|k|\to 0}\left|\frac{\langle \b(z+h)-\b(z),\bar{h}\rangle}{|h|^2}-\frac{\langle \b(z+k)-\b(z),\bar{k}\rangle}{|k|^2}\right|\geq2|\partial \b(z)|,
\end{equation}
and therefore  $|\partial \b (z)|\leq \frac12\,\|\b\|_{\bar{Q}}$. If $\b$ is differentiable at every point $x$ the claim follows.
\end{proof}

%\begin{rem}
%It is aparent in the above proof that the same result holds if in the definition of $\bar{Q}$ one adds the restriction that $h,k$ be mutually orthogonal, that is, $\langle h, k\rangle=0$. With this restriction one obtains, at least in principle, a class of vector fields (let's call it $\bar{Q}^\ast$) which may be larger than $\bar{Q}$. However, it turns out $\bar{Q}=\bar{Q}^\ast$, as will be apparent from the next results. 
%\end{rem}

\noindent
It is a well known fact that Zygmund functions admit a modulus of continuity of the form $\delta\,\log\frac1\delta$, but may fail to differentiable almost everywhere. Thus, removing the differentiability assumption in Lemma \ref{smoothbarQ} does not seem automatic. Our next goal consists of proving this is actually the case.

\begin{theo}\label{Qimpliesboundedpartial}
Let $\b:\R^2\to\R^2$ belong to the class $\bar{Q}$. Then, $\b$ is differentiable almost everywhere, has $BMO$ distributional derivatives, and $\partial \b \in L^\infty$. Moreover,
$$\|\partial \b \|_{L^\infty}\leq C\,\|\b\|_{\bar{Q}}$$
for some constant $C>0$.
\end{theo}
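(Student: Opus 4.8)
The plan is to reduce the general case to the smooth case handled in Lemma \ref{smoothbarQ} by means of mollification, combined with a controlled use of the Poisson extension. First I would record that, by Proposition \ref{LipsbarQsZ}, every $\b\in\bar Q$ lies in the Zygmund class $Z$, so in particular $\b$ is continuous with a $\delta\log\frac1\delta$ modulus of continuity and grows at most like $|z|\log|z|$ at infinity. The key observation is that the defining seminorm $\|\cdot\|_{\bar Q}$ is \emph{local} and is stable under convolution with a nonnegative, compactly supported, radial mollifier: writing $\b_\varepsilon=\b\ast\varphi_\varepsilon$, one has for every $z$ and every $|h|=|k|\neq 0$
$$
\frac{\langle \b_\varepsilon(z+h)-\b_\varepsilon(z),\bar h\rangle}{|h|^2}-\frac{\langle \b_\varepsilon(z+k)-\b_\varepsilon(z),\bar k\rangle}{|k|^2}
=\int\Big(\tfrac{\langle \b(w+h)-\b(w),\bar h\rangle}{|h|^2}-\tfrac{\langle \b(w+k)-\b(w),\bar k\rangle}{|k|^2}\Big)\varphi_\varepsilon(z-w)\,dw,
$$
so that $\|\b_\varepsilon\|_{\bar Q}\leq\|\b\|_{\bar Q}$. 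Since $\b_\varepsilon$ is smooth, Lemma \ref{smoothbarQ} gives $\|\partial\b_\varepsilon\|_{L^\infty}\leq\frac12\|\b\|_{\bar Q}$ uniformly in $\varepsilon$.

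Next I would pass to the limit. Because $\b_\varepsilon\to\b$ locally uniformly (indeed in $Z$-type norms on compacta), $\partial\b_\varepsilon\to\partial\b$ in the sense of distributions; a uniform $L^\infty$ bound on $\partial\b_\varepsilon$ then forces $\partial\b\in L^\infty(\R^2)$ with $\|\partial\b\|_{L^\infty}\leq\frac12\|\b\|_{\bar Q}$ (weak-$*$ lower semicontinuity of the $L^\infty$ norm). Now $\partial\b=\frac{\div\b+i\,\curl\b}2\in L^\infty$, i.e. both $\div\b$ and $\curl\b$ are bounded. To upgrade to $BMO$ derivatives and a.e. differentiability, I would localize: fix a ball $B$, pick $\eta\in C_c^\infty$ equal to $1$ on $B$, and set $\b^\#=\eta\,\b$. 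Then $\b^\#$ is continuous and compactly supported, and $\div\b^\#,\curl\b^\#$ differ from $\eta\,\div\b,\eta\,\curl\b$ by terms involving $\nabla\eta\cdot\b$, which are bounded; hence $\div\b^\#,\curl\b^\#\in L^\infty$. Lemma \ref{curldiv} (case $p=\infty$) then yields $D\b^\#\in BMO$, so $D\b\in BMO(B)$; since $B$ was arbitrary, $D\b\in BMO_{loc}$, and functions with $BMO$ derivatives are differentiable almost everywhere (they lie in $Z$ and, more to the point, $BMO\subset\bigcap_{p<\infty}L^p_{loc}$ gives $\b\in W^{2,p}_{loc}$ for all $p$, hence $\b\in C^1$ off a null set by a standard Lebesgue-point/Sobolev argument). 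Finally, the a.e.\ differentiability of $\b$ means the inequality $|\partial\b(z)|\leq\frac12\|\b\|_{\bar Q}$ already obtained through mollification holds at a.e.\ point directly from the Taylor expansion in Lemma \ref{smoothbarQ}, confirming the constant.

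There is one point that deserves care, and I expect it to be the main obstacle: justifying that the nonnegative radial mollification does not merely preserve $\|\cdot\|_{\bar Q}$ formally but that the convolution integral is absolutely convergent and the interchange above is legitimate. This is where the Zygmund growth $|\b(z)|\lesssim(1+|z|)\log(e+|z|)$ from Proposition \ref{LipsbarQsZ} enters: it guarantees the difference quotients $\frac{\langle\b(w+h)-\b(w),\bar h\rangle}{|h|^2}$ are, for fixed $h$, bounded by $C(1+|\log|h||)$ uniformly in $w$ on the compact support of $\varphi_\varepsilon(z-\cdot)$ (again via Proposition \ref{LipsbarQsZ}), so Fubini applies without issue. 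A secondary subtlety is the claim "$\partial\b_\varepsilon\to\partial\b$ distributionally": this is immediate since $\b_\varepsilon\to\b$ in $L^1_{loc}$ and distributional derivatives are continuous on $\mathcal D'$. Everything else — the $BMO$ upgrade and a.e.\ differentiability — is a direct appeal to Lemma \ref{curldiv} and standard Sobolev/Calderón–Zygmund theory, so the real content is the mollification step plus the already-proven Lemma \ref{smoothbarQ}.
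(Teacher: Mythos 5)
Your argument is correct, but it follows a genuinely different route from the paper. The paper never mollifies: it first reduces to compactly supported $\b$ via the iterated--logarithm cutoffs $g_t$ of \eqref{gt}, then replaces $\b$ by its Poisson extension $\u=P_y\ast\b$, which is smooth, inherits $\|\u(\cdot,y)\|_{\bar Q}\leq\|\b\|_{\bar Q}$, and hence satisfies $\|\partial\u(\cdot,y)\|_{L^\infty}\leq\frac12\|\b\|_{\bar Q}$ by Lemma \ref{smoothbarQ}; a decay estimate for $\partial\u$ off the support then places $\partial\u$ in the harmonic Hardy space $h^p$, and the identification of its $L^p$ boundary values with the distribution $\partial\b$ yields $\partial\b\in L^p\cap L^\infty$, after which Lemma \ref{curldiv} is invoked exactly as you do. Your mollification substitute for the Poisson extension is legitimate: the $\bar Q$ seminorm is a supremum of translation--invariant linear functionals of $\b$, so averaging against a probability density cannot increase it, and weak-$*$ lower semicontinuity of the $L^\infty$ norm transfers the uniform bound $\frac12\|\b\|_{\bar Q}$ to the distributional limit $\partial\b$. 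This is shorter and, notably, dispenses with the compact-support reduction for the main $L^\infty$ estimate, since the mollification and the weak-$*$ limit work globally; the price is that you only obtain $D\b\in BMO$ locally from your cutoff argument (the correction term $\b\,\partial\eta$ is bounded on each fixed ball but its size grows with the ball, so for the global $BMO$ claim you would still want the $g_t$ cutoffs, whose gradients decay like $(t|x|\log|x|)^{-1}$ against the $|x|\log|x|$ growth of $\b$). Two small slips: $D\b\in BMO\subset L^p_{loc}$ gives $\b\in W^{1,p}_{loc}$, not $W^{2,p}_{loc}$, and the conclusion for $p>2$ is differentiability almost everywhere, not $C^1$ off a null set. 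Neither affects the argument. It is worth noting that the paper's heavier Poisson-extension machinery is not gratuitous: it is the template reused for Theorem \ref{R0givesboundedcurl} in higher dimensions, where one cannot control all of $D\b$ and the passage through $h^p$ boundary values is genuinely needed.
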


\begin{proof}
We first prove that it is not restrictive to assume that $\b$ has compact support. To do this, let us assume that the theorem is proved under the extra assumption that $\b$ has compact support.  Now, let us be given $\b\in\bar{Q}$ non compactly supported. and set $\b_t=g_t\,\b$, where
\begin{equation}\label{gt}
g_t(x)=\begin{cases}
1 & |x|\leq t\\
1-\frac1t\,\log\frac{\log |x|}{\log t}& t\leq |x|\leq t^{e^t}\\
0 & t^{e^t}\leq |x|.
\end{cases}
\end{equation}
Clearly, $\b_t$ has compact support. We now prove that $\b_t\in\bar{Q}$. For proving this, we denote $\tau_h\g(x)=\g(x+h)$ and $\Delta_h\g(x)=\tau_h\g(x)-\g(x)$. Then we observe that
$$
\aligned
\frac{\langle\Delta_h\b_t,\bar{h}\rangle}{|h|^2}&-\frac{\langle\Delta_k\b_t,\bar{k}\rangle}{|k|^2}
= \tau_hg_t\,\frac{\langle\Delta_h\b,\bar{h}\rangle}{|h|^2}-\tau_kg_t\,\frac{\langle\Delta_k\b,\bar{k}\rangle}{|k|^2}+\Delta_hg_t\,\frac{\langle \b,\bar{h}\rangle}{|h|^2}-\Delta_kg_t\,\frac{\langle \b,\bar{k}\rangle}{|k|^2}\\
&=(\tau_hg_t-\tau_kg_t)\,\frac{\langle \Delta_h\b,\bar{h}\rangle}{|h|^2}+\tau_kg_t\left(\frac{\langle\Delta_h\b,\bar{h}\rangle}{|h|^2}-\frac{\langle\Delta_k\b,\bar{k}\rangle}{|k|^2}\right)+\langle \b,\frac{\bar{h}\Delta_hg_t}{|h|^2}-\frac{\bar{h}\Delta_kg_t}{|k|^2}\rangle\\
&=(\Delta_hg_t-\Delta_kg_t)\,\frac{\langle \Delta_h\b,\bar{h}\rangle}{|h|^2}+\tau_kg_t\left(\frac{\langle\Delta_h\b,\bar{h}\rangle}{|h|^2}-\frac{\langle\Delta_k\b,\bar{k}\rangle}{|k|^2}\right)+\langle \b,\frac{\bar{h}\Delta_hg_t}{|h|^2}-\frac{\bar{h}\Delta_kg_t}{|k|^2}\rangle
\endaligned
$$
Now we use the Mean Value Theorem to deduce that
$$|\Delta_h g_t(x)|\leq \frac{C|h|}{t\,|x|\,\log|x|}\hspace{1cm}\text{and}\hspace{1cm}|\Delta_k g_t(x)|\leq \frac{C|k|}{t\,|x|\,\log|x|}$$
We now recall that $\b\in\bar{Q}$ implies that $\b\in Z$, and therefore $\b$ has $|x|\log|x|$ growth at infinity. Having in mind that $|g_t|\leq 1$, we have for $|h|=|k|$ that
$$
\left|
\frac{\langle\Delta_h\b_t,\bar{h}\rangle}{|h|^2}-\frac{\langle\Delta_k\b_t,\bar{k}\rangle}{|k|^2}\right|\leq
\left|
\frac{\langle\Delta_h\b,\bar{h}\rangle}{|h|^2}-\frac{\langle\Delta_k\b,\bar{k}\rangle}{|k|^2}\right|+\frac{C}{t}
$$
whence $g_t\b\in \bar{Q}$ and $\|g_t\b\|_{\bar{Q}}\leq \|\b\|_{\bar{Q}}+\frac{C}{t}$. We are now in situation to apply the theorem to $g_t\b$ and so $g_t\b$ is differentiable a.e. and moreover we have the bound
$$\|\overline\partial (g_t\b)\|_{L^\infty}\leq C\,\|g_t\b\|_{\bar{Q}}\leq C\|\b\|_{\bar{Q}}+\frac{C}{t}$$
The proof now finishes easily, as for any fixed $x$ one can always find $t>0$ large enough so that
$$\overline\partial \b(x)=g_t(x)\,\overline\partial \b(x)=\overline\partial (g_t\b)(x)-\b(x)\,\overline\partial g_t(x)$$
whence, after enlarging $t$ if needed, 
$$|\overline\partial \b(x)|\leq \|\overline\partial (g_t\b)\|_{L^\infty}+|\b(x)\,\overline\partial g_t(x)|\leq C\,\|g_t\b\|_{\bar{Q}}+\frac{C}{t}\leq C\|\b\|_{\bar{Q}}$$
as desired. Therefore, we can assume without loss of generality that $\b$ has compact support in $\R^2$.\\
\\
Through a dilation if needed, we will suppose that $\supp\b\subset\D$, where $\D$ denotes the unit disk on $\R^2$. Then, since $\b$ is continuous, the convolution $\u(z,y)=P_y\ast \b(z)$ is harmonic on $\R^2\times (0,+\infty)$ and continuous in $\R^2\times[0,+\infty)$. Also the complex derivative $\partial \u$ is harmonic in $\R^2\times (0,\infty)$, and as distributions one has 
$$\partial \u=\partial (P_y\ast \b)=\partial P_y\ast \b=P_y\ast \partial \b.$$
In particular, the last convolution is well defined, and from $\supp(\partial\b)\subset \D$ we have 
$$|\partial \u(z)| =|\partial P_y\ast\b(z)|\leq C\,\int_\D\frac{|\b(w)|}{|z-w|^3}\,dA(w)\leq \frac{C}{|z|^3}\hspace{1cm}\text{for each }z\notin2\D$$ 
uniformly for each $y>0$. In particular, $\partial \u\in L^p(\C\setminus 2\D)$ for each $\frac23<p<\infty$. From $\b\in \bar{Q}$ and $\|P_y\|_1=1$ we have that also $\u\in \bar{Q}$ and $\|\u\|_{\bar{Q}}\leq \|\b\|_{\bar{Q}}$, uniformly in $y>0$. So by Lemma \ref{smoothbarQ}, one has $2\|\partial \u\|_{L^\infty}\leq \| \u\|_{\bar{Q}}=\|\b\|_{\bar{Q}}$, and this uniformly in $y$.  It then follows that $\partial \u\in L^p(\C)$ uniformly in $y$, for each $1<p<\infty$. As an element of the harmonic Hardy space $h^p(\R^2\times (0,+\infty))$, $p>1$, we know that $\partial \u$ has well defined boundary values $\g\in L^p(\R^2)$, and moreover one necessarily has $\partial \u=P_y\ast \g$. Since also $P_y\ast \partial\b=P_y\ast \g$, and $p>1$, it then follows that $\partial\b=\g$ and so $\partial\b$ is actually an $L^p(\R^2)$ vector field.  By Lemma \ref{curldiv} we obtain $D\b\in L^p$. This already gives that $\b$ is differentiable a.e., because one can take any $p>2$ (see for instance \cite[Theorem 2.21]{Kinn}). Once we know that $\partial\b\in L^p$ and $P_y\ast \partial\b\in L^\infty$ we immediately infer that $\partial\b\in L^\infty$ with $\|\partial\b\|_{L^\infty}\leq \|P_y\ast \partial\b\|_{L^\infty}= \|\partial\u\|_{L^\infty}\leq \frac12\|\b \|_{\bar{Q}}$, and this with no dependence on $\supp\b$.  Using again Lemma \ref{curldiv} we get $D\b\in BMO$. In particular, $\b$ is differentiable almost everywhere. 
\end{proof}

\noindent
 In the converse direction, an extra assumption on the growth of $\b$ is needed.

\begin{theo}\label{partialfboundedimpliesQ}
Let $\b\in W^{1,1}_{loc}(\R^2;\R^2)$ be a continuous vector field such that 
\begin{equation}\label{xlogxgrowth}
\limsup_{|x|\to\infty}\frac{|\b(x)|}{|x|\,\log|x|}<\infty
\end{equation}
and that $\partial \b\in L^\infty$.  Then $\b\in\bar{Q}$ and $\|\b\|_{\bar{Q}}\leq C\,\|\partial \b\|_{L^\infty}$.
\end{theo}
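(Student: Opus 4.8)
The plan is to reduce the statement to Reimann's characterization \eqref{Reimannequi} by the elementary substitution that conjugates the second component. Writing $\b=(v^1,v^2)$, I set
$$\b^*:=(v^1,-v^2),$$
that is, $\b^*=\bar\b$ under the identification of $\R^2$ with $\C$. Since $\b\in W^{1,1}_{loc}$ is continuous and satisfies \eqref{xlogxgrowth}, the same holds for $\b^*$, because $|\b^*(x)|=|\b(x)|$; in particular $\frac{|\b^*(x)|}{|x|\log(e+|x|)}\leq C$.

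Next I would record two algebraic identities. Using $\langle z,w\rangle=\Re(z\bar w)$ on $\R^2\equiv\C$ and the relation $\b^*(z+h)-\b^*(z)=\overline{\b(z+h)-\b(z)}$, one has for all $z,h$
$$\langle\b(z+h)-\b(z),\bar h\rangle=\Re\big((\b(z+h)-\b(z))\,h\big)=\Re\big((\b^*(z+h)-\b^*(z))\,\bar h\big)=\langle\b^*(z+h)-\b^*(z),h\rangle,$$
where the middle equality is just $\Re(u\,h)=\Re(\bar u\,\bar h)$. Hence the defining suprema of $\|\b\|_{\bar Q}$ and $\|\b^*\|_Q$ coincide term by term, so $\|\b\|_{\bar Q}=\|\b^*\|_Q$. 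Secondly, in the plane $S\g=\overline\partial\g$, and since $\overline\partial\bar\b=\overline{\partial\b}$ we get $S\b^*=\overline\partial\b^*=\overline{\partial\b}$, whence $\|S\b^*\|_{L^\infty}=\|\partial\b\|_{L^\infty}<\infty$. (Because $\b\in W^{1,1}_{loc}$, all these distributional derivatives are genuine functions, so "$\in L^\infty$" is meant literally.)

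The statement then follows at once: $\b^*$ is a continuous vector field with $S\b^*\in L^\infty(\R^2)$ and $\frac{|\b^*(x)|}{|x|\log(e+|x|)}\leq C$, so by \eqref{Reimannequi} (the implication that $S\v\in L^\infty$ together with $|x|\log|x|$ growth forces $\v\in Q$), in its quantitative form $\|\v\|_Q\simeq\|S\v\|_{L^\infty}$, we obtain $\b^*\in Q$ with $\|\b^*\|_Q\leq C\,\|S\b^*\|_{L^\infty}$. Translating back through the two identities above,
$$\|\b\|_{\bar Q}=\|\b^*\|_Q\leq C\,\|S\b^*\|_{L^\infty}=C\,\|\partial\b\|_{L^\infty},$$
which is the claim.

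There is no serious obstacle on this route; the only care needed is that the hypotheses of \eqref{Reimannequi} transfer faithfully to $\b^*$ (continuity, the $|x|\log|x|$ growth, and that "$S\b^*\in L^\infty$" is an $L^\infty$ bound on an actual function, which is where $\b\in W^{1,1}_{loc}$ enters). If one preferred a self-contained argument avoiding Reimann's Theorem~3, the natural attempt would mirror, in reverse, the proof of Theorem \ref{Qimpliesboundedpartial}: reduce to compactly supported $\b$ via the cutoffs $g_t$ of \eqref{gt} (checking $\|\partial(g_t\b)\|_{L^\infty}\leq\|\partial\b\|_{L^\infty}+C/t$ exactly as there), pass to the harmonic extension $\u=P_y\ast\b$, which by Lemma \ref{curldiv} has $D\b\in BMO$ hence $\b\in Z$, so $\u(\cdot,y)$ is smooth with $\|\partial\u(\cdot,y)\|_{L^\infty}\leq\|\partial\b\|_{L^\infty}$ and harmonic-Bloch control on its higher derivatives (Lemmas \ref{poissonZ} and \ref{higherorder}), and then bound $\|\u(\cdot,y)\|_{\bar Q}$ uniformly in $y$ before letting $y\to0$. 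The difficulty there is that a dyadic telescoping across scales $r\searrow y$ does not close on the Zygmund bound alone — the large-scale terms fail to decay — so one is forced to exploit precisely the cancellation of the $\overline\partial\u$-term that already powers Reimann's theorem; this is why the reduction above is the efficient proof.
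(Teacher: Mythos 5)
Your reduction is correct: the identity $\langle u,\bar h\rangle=\Re(uh)=\Re(\bar u\,\bar h)=\langle\bar u,h\rangle$ does show that $\|\b\|_{\bar Q}=\|\bar\b\|_{Q}$ term by term, conjugation preserves continuity, the growth condition and membership in $W^{1,1}_{loc}$, and $\overline\partial\bar\b=\overline{\partial\b}$ gives $\|S\bar\b\|_{L^\infty}\simeq\|\partial\b\|_{L^\infty}$; so the claim follows from the implication ``$S\v\in L^\infty$ plus $|x|\log|x|$ growth $\Rightarrow\v\in Q$'' of \eqref{Reimannequi}, quantitatively. This is, however, a genuinely different route from the paper's. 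The paper does not invoke Reimann's Theorem 3 as a black box; it reruns Reimann's Proposition 12 argument from scratch with the roles of $\partial$ and $\overline\partial$ exchanged: reduction to compact support via the cutoffs $g_t$ of \eqref{gt}, the three-term representation $\b(x)=\int_0^y t\,\partial^2_{yy}\u(x,t)\,dt-y\,\partial_y\u(x,y)+\u(x,y)$ for the Poisson extension $\u=P_y\ast\b$, the splitting $\Delta=\Delta_\partial+\Delta_{\overline\partial}$ in which the $\Delta_\partial$ terms are controlled directly by $\|\partial\b\|_{L^\infty}$ while the $\Delta_{\overline\partial}$ terms need the $BMO$/Bloch bounds of Lemmas \ref{curldiv}, \ref{poissonBMO} and \ref{higherorder}, and finally the choice $y=h$. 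What your argument buys is brevity and transparency: in the plane, $\bar Q$ is literally $Q$ after conjugation, and one may reasonably wonder whether the authors regard Theorem \ref{partialfboundedimpliesQ} as essentially known for this reason. What the paper's longer proof buys is a template that survives when conjugation is unavailable: the same scheme, with $\Delta=\Delta_S+\Delta_A$ in place of $\Delta_\partial+\Delta_{\overline\partial}$, is exactly what proves Theorem \ref{partialfboundedimpliesR} and the higher-dimensional Theorem \ref{AboundedimpliesR0} for the class $R_0$, where no analogue of the map $\b\mapsto\bar\b$ exists (as the paper itself emphasizes when explaining why $\bar Q$ does not extend to $n>2$). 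Your closing remark about the self-contained alternative is accurate as far as it goes, but note that the paper's proof does close the estimate by telescoping down to scale $y=h$; the point is not that the Zygmund bound fails but that the two $\Delta_\partial$ terms must be isolated and hit with the $L^\infty$ hypothesis rather than with $\|\b\|_Z$.
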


\begin{proof}
This proof follows the scheme of \cite[Proposition 12]{Rei}. So we first assume that $\b$ has compact support. Fix two unit vectors $\alpha,\beta\in \R^2$, and set $a=\alpha h, b=\beta h$ for some $h>0$. For each vector field $\g:\R^2\to\R^2$, we define
$$\Delta \g(x)=\Delta_{a,b}\g(x)=\langle \g(x+a)-\g(x),\bar{\alpha}\rangle -\langle \g(x+b)-\g(x),\bar{\beta}\rangle.$$
Clearly, $\Delta=\Delta_{a,b}$ is a linear operator in $\g$, and 
\begin{equation}\label{elinftybound}
|\Delta \g(x)|\leq 4\,\|\g\|_{L^\infty}
\end{equation}
Moreover, $\g \in \bar{Q} $ if and only if $|\Delta \g|\leq C\,h$ for some constant $C$ that does not depend on $a$, $b$. We can represent $\Delta \g$ in terms of $\partial \g$ and $\overline\partial \g$ as follows, 
$$\aligned
\Delta \g(x)
&=\int_0^h \frac{d}{ds}\bigg(\langle \g(x+\alpha s), \bar\alpha\rangle-\langle \g(x+\beta s),\bar\beta\rangle\bigg)\,ds\\
&=\int_0^h  \langle D\g(x+\alpha s)\,\alpha, \bar\alpha\rangle-\langle D\g(x+\beta s)\,\beta,\bar\beta\rangle \,ds\\
&=\Re  \int_0^h   (\partial \g(x+\alpha s)\alpha^2- \partial \g(x+\beta s)\beta^2)\, ds
+\Re \int_0^h(\overline\partial \g(x+\alpha s) -\overline\partial \g(x+\beta s) ) \,ds\\&=\Delta_\partial \g(x)+\Delta_{\overline\partial}\g(x)
\endaligned$$
where we set
$$\aligned
\Delta_{\overline\partial}\g(x)&=\Re  \int_0^h   (\overline\partial \g(x+\alpha s)- \overline\partial \g(x+\beta s))\,ds\\
\Delta_\partial \g(x)&=\Re \int_0^h(\partial \g(x+\alpha s)\,\alpha^2 -\partial \g(x+\beta s)\,\beta^2 ) \,ds
\endaligned
$$
We now proceed with the proof. We denote $\u(x,y)=P_y\ast \b(x)$, $x\in\C$, $y\geq 0$. We know that $\u$ is harmonic in $\R^{3}_+$ and continuous up to the boundary, since $\b\in C_c(\C)$. For each $y>0$, 
$$
\aligned 
\b(x)=\u(x,0)
&=\int_0^yt\,\partial^2_{yy}\u(x,t)\,dt-y\,\partial_y\u(x,y)+\u(x,y)\\
&\equiv\int_0^y t\,\w_t(x)\,dt-y\,\v_y(x)+\u_y(x)
\endaligned$$
where we wrote $\u_y(x)=\u(x,y)$, $\v_y(x)= \partial_y\u(x,y)$ and $\w_r(x)= \partial_{yy}^2\u(x,r)$. By the linearity of $\Delta$, which acts only on the $x$ variable, one has
\begin{equation}\label{3terms}
\Delta \b(x)=\int_0^y t\, \Delta \w_t(x)\,dt-y\,\Delta \v_y(x)+\Delta \u_y(x).
\end{equation}
We now bound the three terms in the right hand side. For the first one, we use Lemma \ref{curldiv} to see that $\partial \b\in L^\infty$ implies $D\b\in BMO$, which in turn guarantees that $\b\in Z$. Now, from Lemma \ref{poissonZ} as well as equation \eqref{hessian1/y}, we deduce that $\|H \u\|_{L^\infty}\leq C\,\frac{ \|\b\|_Z}{y}$ which in turn gives us that 
$$\|\w_r\|_{L^\infty}\leq C\, \frac{\|\b\|_Z}{r}.$$
This fact, together with \eqref{elinftybound}, implies for the first term in \eqref{3terms} the bound
$$
\left|\int_0^y t\, \Delta \w_t(x)\,dt\right|\leq \int_0^y t\,4\|\w_t\|_{L^\infty}\,dt= C \,y\,\|\b\|_Z.
$$
For the second and third terms in \eqref{3terms}, we use that $\Delta =\Delta_{\overline\partial}+\Delta_\partial$,  
$$\aligned
y\,\Delta \v_y(x)&=y\,\Delta_{\overline\partial} \v_y(x)+y\,\Delta_\partial \v_y(x)\\
 \Delta \u_y(x)&= \Delta_{\overline\partial} \u_y(x)+ \Delta_\partial \u_y(x)\\
\endaligned
$$
and proceed first with the $\Delta_\partial$ terms. For each fixed $y$, Lemma \ref{easyLinfty} gives us that
$$ \aligned
\partial_{x_i}\,\u_y=  \partial_{x_i}  \,(P_y\ast \b)=P_y\ast (\partial_{x_i}\b)
&\Longrightarrow \partial \u_y=P_y\ast \partial\b\\
&\Longrightarrow \|\partial\u_y\|_{L^\infty}=\|P_y\ast \partial\b\|_{L^\infty}\leq \|\partial\b\|_{L^\infty}\endaligned$$
On the other hand, since $u$ is smooth, we can argue similarly to get that
$$\aligned
\partial_{x_i}\v_y = \partial^2_{y, x_i}\u =  \partial_y\,\left(P_y\ast \partial_{x_i}\b\right)
&\Longrightarrow \partial\v_y = \partial_y(P_y\ast \partial\b)\\
&\Longrightarrow \|\partial\v_y\|_{L^\infty} =\|\partial_y(P_y\ast \partial\b)\|_{L^\infty} \leq C \,\frac{\|\partial\b\|_{L^\infty}}{y}.
\endaligned
$$
Thus, from $|\Delta_\partial \g(x)|\leq 2h\,\|\partial \g\|_{L^\infty}$ one gets that
$$
\aligned
|\Delta_\partial\u_y(x)|&\leq 2h\,\|\partial\u_y\|_{L^\infty} \leq C \,h\,\|\partial\b\|_{L^\infty},\\
|y\,\Delta_\partial\v_y(x)|&\leq 2h y \|\partial\v_y\|_{L^\infty}\leq C \,h\,\|\partial\b\|_{L^\infty}.
\endaligned
$$
Now we proceed with the $\Delta_{\overline\partial}$ terms. Calling $\gamma=\frac{\alpha-\beta}{|\alpha-\beta|}$, we see that
$$ \aligned
|\Delta_{\overline\partial}\g(x)|
&=\left|\Re  \int_0^h   \int_0^{s|\alpha-\beta|}\frac{d}{d\sigma}(\overline\partial \g(x+\beta s+\gamma\sigma) )\,d\sigma\,ds\right|\\
&=\left|\Re\int_0^h   \int_0^{s|\alpha-\beta|} D(\overline\partial \g(x+\beta s+\gamma\sigma)\cdot\gamma  )\,d\sigma\,ds\right|
\leq   \frac{h^2\,|\alpha-\beta|}{2}\, \|D(\overline\partial \g)\|_{L^\infty}
\endaligned$$
After applying this to $\g=\u_y$ and to $\g=\v_y$, and putting al together in \eqref{3terms}, one obtains 
\begin{equation}\label{finalstep1}
|\Delta\b(x)|\leq C \,y\,\|\b\|_Z+C \,h\,\|\partial\b\|_{L^\infty}+\frac{h^2\,|\alpha-\beta| }2\left(\| D(\overline\partial\u_y)\|_{L^\infty}+y\| D(\overline\partial\v_y) \|_{L^\infty}\right)
\end{equation}
Lemma \ref{curldiv} tells that from $\partial\b\in L^\infty$ we get $\overline\partial\b\in BMO$ and so $P_y\ast(\overline\partial\b)$ is harmonic Bloch. This, together with Lemma \ref{poissonBMO}, implies that 
$$
\aligned
\u_y=P_y\ast\b \hspace{.5cm}
&\Longrightarrow\hspace{.5cm}\overline\partial\u_y= P_y\ast \overline\partial\b\\
&\Longrightarrow\hspace{.5cm}D(\overline\partial\u_y)= D(P_y\ast \overline\partial\b)\\
&\Longrightarrow \hspace{.5cm}\|D(\overline\partial\u_y)\|_{L^\infty}=\|D(P_y\ast \overline\partial\b)\|_{L^\infty}\leq C \,\frac{\|\overline\partial\b\|_\ast}{y}\leq C \,\frac{\|\partial\b\|_{L^\infty}}{y}.
\endaligned$$
Similarly,
$$
\aligned
\v_y=\partial_y\u_y=\partial_yP_y\ast\b
&\hspace{.5cm}\Longrightarrow\hspace{.5cm}\overline\partial\v_y= \partial_yP_y\ast \overline\partial\b\\
&\hspace{.5cm}\Longrightarrow\hspace{.5cm}D(\overline\partial\v_y)= D(\partial_yP_y\ast \overline\partial\b)\\
&\hspace{.5cm}\Longrightarrow\hspace{.5cm} \|D(\overline\partial\v_y)\|_{L^\infty}=\|D(\partial_yP_y\ast \overline\partial\b)\|_{L^\infty}\leq C \,\frac{\|\overline\partial\b\|_\ast}{y^2}\leq C\, \frac{\|\partial\b\|_{L^\infty}}{y^2}
\endaligned$$
Plugging the above bounds into \eqref{finalstep1}, we get
$$
|\Delta\b(x)|
\leq C \,y\,\|\b\|_Z+C \,h\,\|\partial\b\|_{L^\infty}+
C \,\frac{h^2\,|\alpha-\beta| }{2y}
$$
and choose $y=h$ to get $|\Delta\b(x)|\leq C \,h\,\|\partial\b\|_{L^\infty}$. So $\b\in \bar{Q}$ and $\|\b\|_{\bar{Q}}\leq C\,\|\partial\b\|_{L^\infty}$. The claim follows in the case $\b\in C_c(\C)$. \\
\\
In order to remove the assumption on the compact support, we use again Reimann's ideas. So we use the $g_t$ functions introduced at \eqref{gt}, and assume that $\partial \b\in L^\infty$ and $|\b(x)|\leq C\,|x|\,\log|x|$ as $|x|\to\infty$. For every fixed $t>0$, we have that $\partial (g_t\b)=\b\,\partial g_t  + g_t\,\partial \b$ and so $\partial(g_t\b)\in L^\infty$. Moreover, $g_t\b$ has compact support. It then follows that $g_t\b\in \bar{Q}$ and $\|g_t\b\|_{\bar{Q}}\leq C\,\|\partial(g_t\b )\|_{L^\infty}$. However, from \eqref{xlogxgrowth} we see that
$$\aligned
|\partial (g_t\b)(x)|
&\leq  |\partial \b(x)| + |\b(x)||\partial   g_t(x)|\\
&\leq |\partial \b(x)|+ C\,|x|\log|x|\frac{1}{t|x|\,\log|x|}\\
&\leq |\partial \b(x)| + \frac{C}t
\endaligned$$
Thus, we can always pick $t>0$ large enough so that $\|g_t\b\|_{\bar{Q}}\leq C\|\partial \b\|_{L^\infty}$. We now fix $x\in\R^2$. For every pair $|h|=|k|$ there is always $t>0$ large enough and such that $|x|,|x+h|,|x+k|< t^{e^t}$ so that $\b=g_t\b$ at $x$, $x+h$ and $x+k$. Thus, when evaluating the $\bar{Q}$ norm of $\b$ at $x, x+h$ and $x+k$ one reduces the differences of $\b$ to the differences of $g_t\b$, which are controlled by $\|g_t\b\|_{\bar{Q}}$, which is independent of $t$, $|h|$ and $|k|$. It follows that $\b\in \bar{Q}$ and $\|\b\|_{\bar{Q}}\leq C\,\|\partial \b\|_{L^\infty}$.

\end{proof}

\noindent
In the above proof, among all terms in the right hand side of \eqref{3terms}, most of them admit the desired key bound precisely because $\b\in Z$, except the two $\Delta_\partial$ terms, which are the only ones requiring specifically that $\partial\b\in L^\infty$.  \\
\\
On the other hand, one can deduce from the previous Theorem that $\bar{Q}$ contains many non-trivial, non-Lipschitz vector fields. At least, as many as non-Lipschitz solutions of the planar Euler system with bounded vorticity.

\begin{coro}\label{Qnonlipschitz}
$\bar{Q}$ contains many non-Lipschitzian vector fields.
\end{coro}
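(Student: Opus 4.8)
The plan is to exhibit an explicit family of vector fields in $\bar{Q}$ that are not Lipschitz, by invoking Theorem~\ref{partialfboundedimpliesQ} together with the Yudovich theory of the planar Euler system recalled in the introduction. Concretely, I would start from a compactly supported vorticity $\omega_0\in L^\infty(\R^2)$ that is \emph{not} continuous — say $\omega_0=\chi_E$, the characteristic function of a bounded measurable set $E$ whose boundary is not smooth (for instance a square, or a domain with a corner). By Yudovich's theorem this produces a unique solution $\omega\in L^\infty((0,\infty)\times\R^2)$ of \eqref{euler}, and at each time $t$ the associated velocity field $\v(t,\cdot)=\frac{i}{2\pi z}\ast\omega(t,\cdot)$ satisfies $\partial\v(t,\cdot)=\frac{\div\v+i\,\curl\v}{2}=\frac{i}{2}\omega(t,\cdot)\in L^\infty$, since incompressibility gives $\div\v=0$ and the vorticity equation gives $\curl\v=\omega$. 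Moreover the Biot–Savart kernel $\frac{i}{2\pi z}$ convolved with a compactly supported $L^\infty$ function has linear growth, so in particular $|\v(t,x)|\le C(1+|x|)\le C\,|x|\log(e+|x|)$ for large $|x|$, and $\v(t,\cdot)\in W^{1,1}_{loc}$ by Lemma~\ref{curldiv}; thus all hypotheses of Theorem~\ref{partialfboundedimpliesQ} are met, and we conclude $\v(t,\cdot)\in\bar{Q}$ with $\|\v(t,\cdot)\|_{\bar{Q}}\le C\,\|\omega_0\|_{L^\infty}$.

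It remains to check that these vector fields are genuinely not Lipschitz. If $\v(t,\cdot)$ were Lipschitz then $D\v(t,\cdot)\in L^\infty$, hence $\curl\v(t,\cdot)=\omega(t,\cdot)\in L^\infty$ would be, additionally, continuous — because a Lipschitz vector field has a continuous representative only insofar as... more carefully: I would argue that a Lipschitz $\v$ has $D\v\in L^\infty$, and the planar Calderón–Zygmund identity $\overline{\partial}\v=\mathrm{p.v.}\frac{-1}{\pi\bar z^2}\ast(\partial\v)$ shows that control of $\partial\v$ in, say, $C^\alpha$ would propagate; but the cleanest route is simply: for $\v$ to be Lipschitz one needs $\omega(t,\cdot)$ to lie in a space (e.g. $\mathrm{VMO}$, or continuous) forcing the flow to preserve such regularity, whereas Yudovich's flow transports $\omega_0$ by a measure-preserving homeomorphism, so $\omega(t,\cdot)=\omega_0\circ X(t,\cdot)^{-1}$ remains a (pushed-forward) characteristic function of a set whose topological boundary is nonempty, hence $\omega(t,\cdot)$ is discontinuous, hence $\curl\v(t,\cdot)\notin C(\R^2)$, hence $D\v(t,\cdot)\notin L^\infty$ — because a bounded $D\v$ would make $\v\in W^{1,\infty}_{loc}$, and then $\curl\v$, being a bounded \emph{distribution} that is actually an $L^\infty$ function, need not be continuous, so this particular implication is too weak.

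Because of that gap, the main obstacle — and the step I would spend the most care on — is producing a clean, self-contained reason why membership in $\bar{Q}$ (equivalently $\partial\v\in L^\infty$ plus growth) does not already force Lipschitz regularity. The most robust argument avoids Euler altogether: take $\b(z)=z\log|z|\cdot\eta(z)$ with $\eta\in C_c^\infty$ equal to $1$ near the origin, or more simply $\b$ whose complex derivative is a fixed bounded but discontinuous function $f\in L^\infty$ with compact support (e.g. $f=\chi_{\D^+}-\chi_{\D^-}$ on half-disks), and recover $\b=\frac{1}{\pi\bar z}\ast f$; then $\partial\b=f\in L^\infty$ and the $x\log x$ growth is automatic by compact support of $f$, so Theorem~\ref{partialfboundedimpliesQ} gives $\b\in\bar{Q}$, while $\b$ is not Lipschitz because $\overline{\partial}\b=\mathrm{p.v.}\frac{-1}{\pi\bar z^2}\ast f$ is a genuine (unbounded) $\mathrm{BMO}\setminus L^\infty$ function for such an $f$ — indeed the Beurling transform of a characteristic function of a disk-sector has a logarithmic singularity — so $D\b\notin L^\infty$ and $\b\notin Lip$. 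Once this non-Lipschitz example is pinned down, the corollary follows: the same construction works for an infinite-dimensional family of choices of $f$, so $\bar{Q}$ contains many non-Lipschitzian vector fields; and, as remarked, taking $f=\frac{i}{2}\omega(t,\cdot)$ from Yudovich's theory exhibits among them the non-smooth solutions of the planar Euler system.
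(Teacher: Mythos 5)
Your final argument is correct, and it reaches the conclusion by a genuinely different route from the paper on the one point that matters. The paper's proof is exactly your first attempt: take any compactly supported $\omega_0\in L^\infty$, invoke Yudovich to get $\v(t,\cdot)$ with $2\partial\v=i\omega(t,\cdot)\in L^\infty$, conclude $\v(t,\cdot)\in\bar{Q}$ from Theorem \ref{partialfboundedimpliesQ}, and then for non-Lipschitzness simply \emph{cite} the Bahouri--Chemin example \cite[Theorem 1.3]{Chex} of a bounded vorticity whose velocity field is not Lipschitz. The ``gap'' you agonize over in your second paragraph --- that $\curl\v\in L^\infty$ discontinuous does not by itself contradict $D\v\in L^\infty$ --- is real, and you are right to discard that line; the paper sidesteps it entirely by citation rather than by argument. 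Your replacement is a static construction: $\b=\frac{1}{\pi\bar z}\ast f$ with $f\in L^\infty_c$ chosen so that the Beurling-type transform $\overline\partial\b=\mathrm{p.v.}\frac{-1}{\pi\bar z^2}\ast f$ is unbounded (a characteristic function of a set with a corner, or your signed half-disks). This is consistent with the conventions of Lemma \ref{curldiv}, satisfies the growth and $W^{1,1}_{loc}$ hypotheses of Theorem \ref{partialfboundedimpliesQ} trivially, and is essentially the Bahouri--Chemin datum frozen at time zero. What it buys you is self-containedness and an evidently infinite-dimensional family of examples; what it costs is that the one nontrivial analytic fact --- the logarithmic singularity of the Beurling transform of such an $f$ --- is asserted rather than proved, so your argument outsources exactly as much as the paper's does, just to a different (equally standard) piece of folklore. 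If you want the write-up airtight, either compute that singularity explicitly for one concrete $f$ or, like the authors, cite the known example.
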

\begin{proof}
Let us assume that $\omega_0:\C\to \R$ is a real valued, compactly supported function, such that $\omega_0\in L^\infty$. It follows from Yudovich Theorem \cite{Y} that the associate Euler system, in its vorticity form
$$
\begin{cases}
\omega_t+(\v\cdot\nabla)\omega=0\\
\v(t,\cdot)=\frac{1}{2\pi}\,\frac{(y,-x)}{x^2+y^2}\ast\omega(t,\cdot)\\
\omega(0,\cdot)=\omega_0\end{cases}
$$ 
admits a unique solution $\omega$ global in time, belonging to $L^\infty((0,\infty); L^\infty(\C))$, and whose associate velocity field $v$ is such that $\curl\v=\omega$, that is, $2 \partial\v = i\omega$. In particular, $\partial\v(t,\cdot)\in L^\infty$ for every $t$. Therefore, $\v(t,\cdot)$ is an element of $\bar{Q}$ at every time. However, it is well known that not all bounded vorticities produce Lipschitz vector fields, see for instance the example by Bahouri and Chemin in  \cite[Theorem 1.3]{Chex}.
\end{proof}

\section{An alternative to $\bar{Q}$: the class $R$}\label{Rsection}

The class $\bar{Q}$ is an appropiate counterpart to Reimann's $Q$ class when $n=2$, but seems not so convenient if $n>2$ due to the absence of complex conjugation. The following observation shows that there is another way to recover $|\partial \b(x)|$ from the Taylor development of $\b$ at $x$ that may be more convenient with higher dimensional counterparts. 

\begin{lem}\label{boundedcurldiv}
Let $\b$ be a vector field in $\R^2$. Assume that $x$ is a differentiability point of $\b$. Then
$$
\limsup_{|h|,|k|\to 0} \sup_{0\leq\theta\leq 2\pi}\frac{|\langle \b(x+h)-\b(x), e^{i\theta} k\rangle-\langle \b(x+k)-\b(x), e^{i\theta} h\rangle |}{|h|\,|k|} = 2\left| \partial \b(x) \right|.
$$
\end{lem}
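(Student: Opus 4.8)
The plan is to reduce everything to the first-order Taylor expansion of $\b$ at the differentiability point $x$, exactly as in the proof of Lemma \ref{smoothbarQ}, and then compute the supremum over $\theta$ explicitly. Writing $\b(x+h)-\b(x)=\partial\b(x)\,h+\overline\partial\b(x)\,\bar h+o(|h|)$ and similarly for $k$, I would compute the bilinear antisymmetric expression in the numerator. The key algebraic observation is that when one expands $\langle\b(x+h)-\b(x),e^{i\theta}k\rangle-\langle\b(x+k)-\b(x),e^{i\theta}h\rangle$ using $\langle z,w\rangle=\Re(z\bar w)$, the terms involving $\overline\partial\b(x)$ contribute $\Re(\overline\partial\b(x)\,\bar h\,e^{-i\theta}\bar k)-\Re(\overline\partial\b(x)\,\bar k\,e^{-i\theta}\bar h)$, which is identically zero by symmetry in $h,k$. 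So only the $\partial\b(x)$ part survives, giving
\begin{equation*}
\langle\b(x+h)-\b(x),e^{i\theta}k\rangle-\langle\b(x+k)-\b(x),e^{i\theta}h\rangle = \Re\!\left(\partial\b(x)\,e^{-i\theta}(h\bar k-k\bar h)\right)+o(|h||k|).
\end{equation*}
Since $h\bar k-k\bar h=-2i\,\Im(h\bar k)$ is purely imaginary, this equals $2\,\Im(h\bar k)\,\Im(\partial\b(x)e^{-i\theta})+o(|h||k|)$.

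Next I would take the supremum over $\theta\in[0,2\pi]$: for fixed $h,k$ the quantity $\sup_\theta|\Im(\partial\b(x)e^{-i\theta})|$ equals $|\partial\b(x)|$, achieved at an appropriate $\theta$. Hence for each fixed pair $h,k$,
\begin{equation*}
\sup_{0\leq\theta\leq2\pi}\frac{|\langle\b(x+h)-\b(x),e^{i\theta}k\rangle-\langle\b(x+k)-\b(x),e^{i\theta}h\rangle|}{|h||k|} = \frac{2\,|\Im(h\bar k)|}{|h||k|}\,|\partial\b(x)|+o(1).
\end{equation*}
The factor $\frac{2|\Im(h\bar k)|}{|h||k|}$ is at most $2$, with equality precisely when $h\perp k$ (e.g. $k=ih$). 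Therefore the outer $\limsup$ over $|h|,|k|\to0$ is exactly $2|\partial\b(x)|$: one gets the upper bound $\leq 2|\partial\b(x)|$ from $\frac{2|\Im(h\bar k)|}{|h||k|}\leq 2$ and the controlled error term, and the matching lower bound by restricting to the subsequence $k=ih$, $h\to0$, along which the main term is exactly $2|\partial\b(x)|$.

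I do not expect a serious obstacle here; the proof is essentially a bookkeeping exercise with the identity $\langle z,w\rangle=\Re(z\bar w)$ and the two-term Taylor expansion. The one point that needs a little care is making sure the $o(|h|)+o(|k|)$ remainders from the Taylor expansions, after being multiplied by the unit-modulus factors $e^{i\theta}k$ and $e^{i\theta}h$ and divided by $|h||k|$, genuinely tend to $0$ uniformly in $\theta$ as $|h|,|k|\to0$ — this is immediate since $|e^{i\theta}k|=|k|$ and the bound $|\langle o(|h|),e^{i\theta}k\rangle|\leq |k|\,o(|h|)$ is $\theta$-independent. The other mild subtlety is the cancellation of the $\overline\partial\b(x)$ contribution: one should write it out carefully, noting $\langle\overline\partial\b(x)\bar h,e^{i\theta}k\rangle=\Re(\overline\partial\b(x)\,\bar h\,\overline{e^{i\theta}k})=\Re(\overline\partial\b(x)e^{-i\theta}\,\bar h\bar k)$, which is symmetric under $h\leftrightarrow k$ and hence cancels against the second term. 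Everything else is routine.
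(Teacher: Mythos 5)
Your proposal is correct and follows essentially the same route as the paper: expand via the differential, observe that the $\overline\partial\b(x)$ contributions cancel by the $h\leftrightarrow k$ symmetry of $\Re(\overline\partial\b(x)e^{-i\theta}\bar h\bar k)$, reduce the main term to $\pm 2\,\Im(h\bar k)\,\Im(\partial\b(x)e^{-i\theta})$, and obtain the two bounds by optimizing over $\theta$ and choosing $k=ih$. (Only a harmless sign slip: $h\bar k-k\bar h=+2i\,\Im(h\bar k)$, not $-2i\,\Im(h\bar k)$; this does not affect the absolute values used.)
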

\begin{proof}
We first note that
$$\aligned
\langle D\b(x)h, e^{i\theta} k\rangle-\langle D\b(x)k, e^{i\theta} h\rangle
&=\Re\bigg( (\partial \b(x) h+\overline\partial \b(x)\,\bar{h})e^{-i\theta}\bar{k}-(\partial \b(x) k+\overline\partial \b(x)\,\bar{k})e^{-i\theta}\bar{h}\bigg)\\
&=\Re\bigg( \partial\b(x)e^{-i\theta}( h  \bar{k}- k \bar{h})\bigg)\\
&=-2\Im\bigg( \partial\b(x)e^{-i\theta}\bigg)\,\Im( h  \bar{k} )\\
&= \bigg(-2 \Im(\partial\b(x))\,\cos\theta +2\,\Re(\partial\b(x))\,\sin\theta\bigg)\,\Im( h  \bar{k} )
\endaligned$$
But since $\b$ is differentiable at $x$ we know that
$$
\limsup_{|h|\to 0}\frac{|\langle\b(x+h)-\b(x)-D\b(x)h, e^{i\theta}k\rangle|}{|h|\,|k|}=\limsup_{|k|\to 0}\frac{|\langle\b(x+k)-\b(x)-D\b(x)k, e^{i\theta}h\rangle|}{|h|\,|k|}=0
$$
Thus
$$\aligned
2\Im\bigg( \partial\b(x)e^{-i\theta}\bigg)\,\frac{\Im( h  \bar{k} )}{|h|\,|k|}
&=-\frac{\langle D\b(x)h, e^{i\theta} k\rangle-\langle D\b(x)k, e^{i\theta} h\rangle}{|h|\,|k|}\\
&=-\frac{\langle\b(x+h)-\b(x), e^{i\theta} k\rangle-\langle\b(x+k)-\b(x), e^{i\theta} h\rangle}{|h|\,|k|}\\
&+\frac{\langle o(h), k\rangle}{|h|\,|k|}+\frac{\langle o(k), h\rangle}{|h|\,|k|}
\endaligned$$
so it is obvious that if we take first supremum in $\theta$ and then $\limsup$ in $h,k$ one gets
$$\limsup_{h,k\to 0}\sup_\theta\left|\frac{\langle\b(x+h)-\b(x), e^{i\theta} k\rangle-\langle\b(x+k)-\b(x), e^{i\theta} h\rangle}{|h|\,|k|}\right|\leq \left|2 \partial\b(x) \right|.$$
For the converse inequality, just choose $k=ih$, then take supremum in $\theta$ and let $h\to 0$ then
$$
\left|2 \partial\b(x) \right| \leq \limsup_{h,k\to 0}\left|\frac{\langle\b(x+h)-\b(x), e^{i\theta} k\rangle-\langle\b(x+k)-\b(x), e^{i\theta} h\rangle}{|h|\,|k|}\right|.
$$
The claim follows. 
\end{proof}

\noindent
Lemma \ref{boundedcurldiv} encourages us to introduce the following definition. 

\begin{defi}\label{definitionRn=2}
We say that a continuous function $\b:\R^2\to\R^2$ is an element of the class $R$ if
$$
\sup_{x\in\R^2}\sup_{|h|=|k|\neq0}\sup_{0\leq\theta\leq 2\pi}\frac{|\langle\b(x+h)-\b(x), e^{i\theta} k\rangle-\langle\b(x+k)-\b(x), e^{i\theta} h\rangle |}{|h|\,|k|} \leq C.
$$
The best constant $C$ will be denoted by $\|\b\|_{R}$.
\end{defi}

\noindent
It is not hard to see that we have the inequalities
$$\|\b\|_{Z}\leq c\,\|\b\|_R\leq c\,\|\b\|_{Lip}.$$
As it was for $\bar{Q}$, these inequalities are actually a direct consequence of Propositon \ref{zygmund}, which will be proven in the next sections. Also, it is not hard to deduce from Lemma \ref{boundedcurldiv} that if $\b\in R$ happens to be smooth then one has the bound
\begin{equation}\label{Rsmooth}
\|\partial \b\|_{L^\infty}\leq \frac12\,\|\b\|_R,
\end{equation}
arguing as we did in Lemma \ref{smoothbarQ}. As in the previous section, the difficulty is in proving that \eqref{Rsmooth} also holds true in absence of smoothness. 

\begin{theo}\label{Rimpliesboundeddb}
Let $\b:\R^2\to\R^2$ belong to the class $R$. Then $\b$ is differentiable almost everywhere, it has $BMO$ distributional derivatives, and $\partial \b\in L^\infty$ with $\|\partial \b\|_{L^\infty}\leq\frac12\|\b\|_{R}$.
\end{theo}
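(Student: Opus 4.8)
The plan is to follow the exact same scheme used in the proof of Theorem \ref{Qimpliesboundedpartial}, since the class $R$ plays here the role that $\bar{Q}$ played there, and both reduce in the smooth case to the same pointwise bound $\|\partial\b\|_{L^\infty}\leq\frac12\|\b\|_R$ (inequality \eqref{Rsmooth}). First I would reduce to the case of compactly supported $\b$. For this one uses the cutoff functions $g_t$ from \eqref{gt} and checks that $g_t\b\in R$ with $\|g_t\b\|_R\leq\|\b\|_R+C/t$. The verification is a Leibniz-type expansion of the finite difference $\langle(g_t\b)(x+h)-(g_t\b)(x),e^{i\theta}k\rangle-\langle(g_t\b)(x+k)-(g_t\b)(x),e^{i\theta}h\rangle$ into a main term $\tau_k g_t$ (or $\tau_h g_t$) times the $R$-difference of $\b$, plus error terms involving $\Delta_h g_t$, $\Delta_k g_t$; these errors are controlled by the Mean Value Theorem bound $|\Delta_h g_t(x)|\leq C|h|/(t|x|\log|x|)$ together with the $Z$-membership of $\b$ (hence $|x|\log|x|$ growth), exactly as in the $\bar{Q}$ argument. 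Then, since $\partial\b(x)=\partial(g_t\b)(x)-\b(x)\partial g_t(x)$ for $t$ large depending on $x$, the estimate for compactly supported fields transfers to the general case with no loss in the constant.

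Once $\b$ is compactly supported, say $\supp\b\subset\D$, I would take the harmonic extension $\u(z,y)=P_y\ast\b(z)$. Convolution with $P_y$ preserves the $R$-norm, so $\u(\cdot,y)\in R$ uniformly in $y$, and since $\u(\cdot,y)$ is smooth, \eqref{Rsmooth} gives $2\|\partial\u(\cdot,y)\|_{L^\infty}\leq\|\b\|_R$ uniformly in $y$. As in the $\bar{Q}$ proof, $\partial\u=P_y\ast\partial\b$ as distributions, and from the decay of $\partial P_y$ one gets $|\partial\u(z,y)|\leq C/|z|^3$ off $2\D$, uniformly in $y$, hence $\partial\u\in L^p(\C)$ uniformly in $y$ for every $1<p<\infty$. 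Therefore $\partial\u$ lies in a harmonic Hardy space $h^p$, so it has $L^p$ boundary values $\g$ with $\partial\u=P_y\ast\g$; comparing with $\partial\u=P_y\ast\partial\b$ and using $p>1$ forces $\partial\b=\g\in L^p$. Then Lemma \ref{curldiv} yields $D\b\in L^p$ for all $p<\infty$, so $\b$ is differentiable a.e. (e.g.\ taking $p>2$). Since $\partial\b\in L^p$ and $P_y\ast\partial\b=\partial\u\in L^\infty$ with $\|\partial\u\|_{L^\infty}\leq\frac12\|\b\|_R$, one concludes $\partial\b\in L^\infty$ with the same bound; a final application of Lemma \ref{curldiv} upgrades $D\b$ to $BMO$.

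The one point that genuinely needs care — and is the main obstacle — is establishing \eqref{Rsmooth} itself, i.e.\ that for smooth $\b$ the pointwise quantity $\sup_\theta|\langle D\b(x)h,e^{i\theta}k\rangle-\langle D\b(x)k,e^{i\theta}h\rangle|/(|h||k|)$ dominates $2|\partial\b(x)|$ uniformly. This is exactly the content of Lemma \ref{boundedcurldiv}: the bilinear expression equals $(-2\Im\partial\b(x)\cos\theta+2\Re\partial\b(x)\sin\theta)\,\Im(h\bar k)/(|h||k|)$, whose supremum over $\theta$ and over $|h|=|k|$ is $2|\partial\b(x)|$ (achieved at $k=ih$ and the optimal phase). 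Feeding the smooth field $\u(\cdot,y)$ into this and taking $y\to 0$ along a differentiability point, combined with the uniform $R$-bound, gives \eqref{Rsmooth} for $\u$, which is all that the Hardy-space argument above requires. Everything else is a transcription of the already-verified $\bar{Q}$ computations, with $\bar h$ replaced by $e^{i\theta}k$ and $\bar k$ by $e^{i\theta}h$ and an extra harmless supremum over $\theta$ running through all the finite-difference manipulations.
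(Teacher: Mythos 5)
Your proposal is correct and is essentially the proof the paper intends: the paper itself omits the argument for Theorem \ref{Rimpliesboundeddb}, stating only that it "follows the lines of" Theorem \ref{Qimpliesboundedpartial}, and your write-up is a faithful transcription of that scheme (cutoff by $g_t$, Poisson extension, the smooth-case bound \eqref{Rsmooth} via Lemma \ref{boundedcurldiv}, the harmonic Hardy space identification of boundary values, and two applications of Lemma \ref{curldiv}). You also correctly isolate the only point where the $R$ and $\bar{Q}$ arguments genuinely differ, namely the derivation of the pointwise lower bound $2|\partial\b(x)|$ from the rotated bilinear form.
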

\begin{proof}
The proof of the above result follows the lines of the proof we have given in Theorem \ref{Qimpliesboundedpartial}, so we omit it. 
\end{proof}
\noindent
The above sufficient condition for belonging to $R$ is also necessary, again with the growth condition.

\begin{theo}\label{partialfboundedimpliesR}
Let $\b\in W^{1,1}_{loc}(\R^2;\R^2)$ be a vector field such that 
$$\limsup_{|x|\to\infty}\frac{|\b(x)|}{|x|\,\log|x|}<\infty$$
and that $\partial \b\in L^\infty$.  Then $\b\in R$ and $\|\b\|_R\leq C\,\|\partial \b\|_{L^\infty}$.
\end{theo}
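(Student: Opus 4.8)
The plan is to mirror the proof of Theorem \ref{partialfboundedimpliesQ}, since the class $R$ was tailor-made (via Lemma \ref{boundedcurldiv}) to recover $|\partial\b(x)|$ from the Taylor expansion in exactly the same way as $\bar{Q}$. First I would reduce to the case $\b\in C_c(\C)$ by the now-standard cutoff argument: given $\b$ with $|\b(x)|\lesssim|x|\log|x|$ at infinity and $\partial\b\in L^\infty$, multiply by the functions $g_t$ from \eqref{gt}; then $\partial(g_t\b)=\b\,\partial g_t+g_t\,\partial\b$ and the Mean Value Theorem bound $|\partial g_t(x)|\leq \frac{C}{t\,|x|\log|x|}$ gives $\|\partial(g_t\b)\|_{L^\infty}\leq \|\partial\b\|_{L^\infty}+\frac{C}{t}$, so for $t$ large $\|\partial(g_t\b)\|_{L^\infty}\lesssim\|\partial\b\|_{L^\infty}$. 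Since for any fixed $x$ and any $|h|=|k|$ one can take $t$ large enough that $\b=g_t\b$ at $x,x+h,x+k$, the $R$-seminorm of $\b$ is controlled by $\sup_t\|g_t\b\|_R$, reducing everything to the compactly supported case.

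For $\b\in C_c(\C)$, I would set $\u(x,y)=P_y\ast\b(x)$ and write, for each $y>0$, the identity
$$\b(x)=\u(x,0)=\int_0^y t\,\partial^2_{yy}\u(x,t)\,dt-y\,\partial_y\u(x,y)+\u(x,y).$$
Fixing unit vectors $\alpha,\beta$, $h>0$, $a=\alpha h$, $b=\beta h$, and $\theta\in[0,2\pi]$, define the linear functional
$$\Delta\g(x)=\langle\g(x+a)-\g(x),e^{i\theta}\beta\rangle-\langle\g(x+b)-\g(x),e^{i\theta}\alpha\rangle,$$
so that $\b\in R$ iff $|\Delta\g|\leq C\,h$ uniformly in $\alpha,\beta,\theta$. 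One has the trivial bound $|\Delta\g|\leq 4\|\g\|_{L^\infty}$, and — writing the two differences as integrals of $D\g$ along the segments and invoking the computation in the proof of Lemma \ref{boundedcurldiv} — a decomposition $\Delta\g=\Delta_\partial\g+\Delta_{\overline\partial}\g$ where $\Delta_\partial\g$ involves only $\partial\g$ with $|\Delta_\partial\g|\leq C\,h\,\|\partial\g\|_{L^\infty}$, and $\Delta_{\overline\partial}\g$ involves $\overline\partial\g$ with $|\Delta_{\overline\partial}\g|\leq C\,h^2\,\|D(\overline\partial\g)\|_{L^\infty}$ after one more integration along the segment joining $\alpha s$ to $\beta s$. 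Applying $\Delta$ to the displayed identity gives three terms; the first is handled since $\partial\b\in L^\infty\Rightarrow D\b\in BMO$ (Lemma \ref{curldiv}) $\Rightarrow\b\in Z$, so $\|\partial^2_{yy}\u(\cdot,t)\|_{L^\infty}\leq C\|\b\|_Z/t$ by Lemma \ref{poissonZ} and \eqref{hessian1/y}, yielding $\lesssim y\|\b\|_Z$. For the other two terms, the $\Delta_\partial$ pieces use $\partial\u_y=P_y\ast\partial\b$ and $\partial(\partial_y\u_y)=\partial_y(P_y\ast\partial\b)$ with Lemma \ref{easyLinfty}, giving $\lesssim h\,\|\partial\b\|_{L^\infty}$, while the $\Delta_{\overline\partial}$ pieces use $\overline\partial\b\in BMO$ (again Lemma \ref{curldiv}) and Lemma \ref{poissonBMO} to get $\|D(\overline\partial\u_y)\|_{L^\infty}\leq C\|\partial\b\|_{L^\infty}/y$ and $\|D(\overline\partial\partial_y\u_y)\|_{L^\infty}\leq C\|\partial\b\|_{L^\infty}/y^2$, contributing $\lesssim h^2\|\partial\b\|_{L^\infty}/y$. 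Summing and choosing $y=h$ yields $|\Delta\b(x)|\leq C\,h\,\|\partial\b\|_{L^\infty}$, i.e. $\b\in R$ with $\|\b\|_R\leq C\|\partial\b\|_{L^\infty}$.

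The only genuinely new bookkeeping compared with Theorem \ref{partialfboundedimpliesQ} is verifying that the $e^{i\theta}$ rotation does not spoil the $\partial$/$\overline\partial$ split — but this is precisely the algebraic identity already carried out in Lemma \ref{boundedcurldiv}, where $\langle D\b(x)h,e^{i\theta}k\rangle-\langle D\b(x)k,e^{i\theta}h\rangle=-2\Im(\partial\b(x)e^{-i\theta})\,\Im(h\bar k)$ depends only on $\partial\b$; the $\overline\partial$ contribution appears only through the first-order remainder of the Taylor expansion, which is why a further integration (producing the extra factor $h$) is available for it. I do not expect a serious obstacle: the proof is structurally identical to that of Theorem \ref{partialfboundedimpliesQ}, and the growth-removal argument is verbatim. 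I would simply remark that the proof follows the scheme of Theorems \ref{Qimpliesboundedpartial}--\ref{partialfboundedimpliesQ} with the $\Delta$ operator adapted as above.
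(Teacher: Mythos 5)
Your proposal is correct and follows essentially the same route as the paper's proof: the same difference operator $\Delta_{a,b,\theta}$, the same split $\Delta=\Delta_\partial+\Delta_{\overline\partial}$ in which the rotation $e^{i\theta}$ enters only as a harmless unimodular factor, the same three-term Poisson-integral identity with the choice $y=h$, and the same $g_t$ cutoff for the non-compactly-supported case. The only cosmetic difference is that the paper bounds the $\Delta_{\overline\partial}$ terms via Lemma \ref{poissonZ} and $\|\b\|_Z$ while you invoke Lemma \ref{poissonBMO} and $\|\overline\partial\b\|_\ast$; both are equivalent up to constants since $\partial\b\in L^\infty$ gives $\overline\partial\b\in BMO$ and hence $\b\in Z$.
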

\begin{proof}
Even though he proof is similar to the proof of Theorem \ref{partialfboundedimpliesQ}, some modifications need to be done. As before, we only do it assuming that $\b$ has compact support (removing this assumption can be done as in Theorem \ref{partialfboundedimpliesQ}), and start by fixing two unit vectors $\alpha,\beta\in \R^2$, and set $a=\alpha h, b=\beta h$ for some $h>0$. Given $\g:\R^2\to\R^2$, this time one sets
$$\Delta \g(x)=\Delta_{a,b,\theta}\g(x)=\langle \g(x+a)-\g(x),e^{i\theta}\beta\rangle -\langle \g(x+b)-\g(x),e^{i\theta}\alpha\rangle.$$
Here, $\theta\in\{0,\pi/2\}$. Clearly, $\Delta=\Delta_{a,b,\theta}$ is a linear operator in $\g$, and 
\begin{equation}\label{Linftybound}
|\Delta \g(x)|\leq 4\,\|\g\|_{L^\infty}
\end{equation}
Moreover, $\g \in R$ if and only if $|\Delta \g|\leq C\,h$ for some constant $C$ that does not depend on $a$, $b$ or $\theta$. The representation of $\Delta \g$ in terms of $\partial \g$ and $\overline\partial \g$ changes a bit with respect to that in Theorem \ref{partialfboundedimpliesQ}, 
$$\aligned
\Delta \g(x)
&=\int_0^h \frac{d}{ds}\bigg(\langle \g(x+\alpha s),e^{i\theta}\beta\rangle-\langle \g(x+\beta s),e^{i\theta}\alpha\rangle\bigg)\,ds\\
&=\int_0^h  \langle D\g(x+\alpha s)\,\alpha, e^{i\theta}\beta\rangle-\langle D\g(x+\beta s)\,\beta,e^{i\theta}\alpha\rangle \,ds\\
&=\Delta_{\overline\partial}\g(x)+\Delta_{ \partial}\g(x),
\endaligned$$
where we have set
$$\aligned
\Delta_{\overline\partial}\g(x)&=\Re \left(e^{-i\theta} \int_0^h   (\overline\partial \g(x+\alpha s)- \overline\partial \g(x+\beta s))\,\bar{\beta}\bar{ \alpha}\,ds\right),\\
\Delta_\partial \g(x)&=\Re\left( e^{-i\theta}\int_0^h(\partial \g(x+\alpha s)\,\alpha\bar{\beta}-\partial \g(x+\beta s)\,\beta\bar{\alpha} ) \,ds\right).\endaligned
$$
The proof now follows as the one of Theorem \ref{partialfboundedimpliesQ}. So for $\u(x,y)=P_y\ast \b(x)$ one knows that $\u$ is harmonic in $\R^{3}_+$ and continuous up to the boundary, since $\b\in C_c(\C)$. For each $t>0$, 
$$
\aligned 
\b(x)=\u(x,0)
&=\int_0^yt\,\partial^2_{yy}\u(x,t)\,dt-y\,\partial_y\u(x,y)+\u(x,y)\\
&\equiv\int_0^y t\,\w_t(x)\,dt-y\,\v_y(x)+\u_y(x)
\endaligned$$
where we wrote $\u_y(x)=\u(x,y)$, $\v_y(x)= \partial_y\u(x,y)$ and $\w_r(x)= \partial_{yy}^2\u(x,r)$. By the linearity of $\Delta$, which acts only on the $x$ variable, one has
$$
\Delta \b(x)=\int_0^y t\, \Delta \w_t(x)\,dt-y\,\Delta\v_y(x)+\Delta \u_y(x).
$$
and now one proceeds term by term. For the $\w$ term, one can use Lemma \ref{poissonZ} to see that
$$\aligned
\partial \b\in L^\infty\hspace{1cm}
&\Longrightarrow\hspace{1cm}D\b\in BMO\\
&\Longrightarrow\hspace{1cm} \b\in Z\\
&\Longleftrightarrow\hspace{1cm} \|H \u\|_{L^\infty}\leq C\,\frac{ \|\b\|_Z}{y}\hspace{1cm}\Rightarrow \|\w_r\|_{L^\infty}\leq C\, \frac{\|\b\|_Z}{r}\endaligned$$
Hence
$$
\left|\int_0^y t\, \Delta \w_t(x)\,dt\right|\leq \int_0^y t\,4\|\w_t\|_{L^\infty}\,dt= C \,y\,\|\b\|_Z
$$
As desired. For the other two terms, we use that $\Delta =\Delta_{\overline\partial}+\Delta_\partial$,  
$$\aligned
y\,\Delta \v_y(x)&=y\,\Delta_{\overline\partial} \v_y(x)+y\,\Delta_\partial \v_y(x)\\
 \Delta \u_y(x)&= \Delta_{\overline\partial} \u_y(x)+ \Delta_\partial \u_y(x)\\
\endaligned
$$
and proceed first with the $\Delta_\partial$ terms. For each fixed $y$, Lemma \ref{easyLinfty} gives us that
$$ \aligned
\partial_{x_i}\,\u_y=  \partial_{x_i}  \,(P_y\ast \b)=P_y\ast (\partial_{x_i}\b)
&\Longrightarrow \partial \u_y=P_y\ast \partial \b\\
&\Longrightarrow \|\partial \u_y\|_{L^\infty}=\|P_y\ast \partial \b\|_{L^\infty}\leq C \,\|\partial \b\|_{L^\infty}\endaligned$$
On the other hand, since $\u$ is smooth, we can argue similarly to get that
$$\aligned
\partial_{x_i} \v_y = \partial^2_{y, x_i} \u =  \partial_y\,\left(P_y\ast \partial_{x_i} \b\right)
&\Longrightarrow \partial \v_y = \partial_y(P_y\ast \partial \b)\\
&\Longrightarrow \|\partial \v_y\|_{L^\infty} =\|\partial_y(P_y\ast \partial \b)\|_{L^\infty} \leq C \,\frac{\|\partial \b\|_{L^\infty}}{y}.
\endaligned
$$
Thus
$$
\aligned
|\Delta_\partial \u_y(x)|&\leq 2h\,\|\partial \u_y\|_{L^\infty} \leq C \,h\,\|\partial \b\|_{L^\infty}\\
|y\,\Delta_\partial \v_y(x)|&\leq 2h y \|\partial \v_y\|_{L^\infty}\leq C \,h\,\|\partial \b\|_{L^\infty}
\endaligned
$$
where $C$ is a constant. Concerning the $\Delta_{\overline\partial}$ terms, we call $\gamma=\frac{\alpha-\beta}{|\alpha-\beta|}$, and observe that
$$ \aligned
|\Delta_{\overline\partial}\g(x)|
&=\left|\Re \left(e^{-i\theta} \int_0^h   \int_0^{s|\alpha-\beta|}\frac{d}{d\sigma}(\overline\partial \g(x+\beta s+\gamma\sigma) )\,\bar{\beta}\bar{ \alpha}\,d\sigma\,ds\right)\right|\\
&=\left|\Re \left(e^{-i\theta} \int_0^h   \int_0^{s|\alpha-\beta|} D(\overline\partial \g(x+\beta s+\gamma\sigma)\cdot\gamma  )\,\bar{\beta}\bar{ \alpha}\,d\sigma\,ds\right)\right|
\leq   \frac{h^2\,|\alpha-\beta|}{2}\, \|D(\overline\partial \g)\|_{L^\infty}
\endaligned$$
After applying this to $\g=\u_y$ and to $\g=\v_y$, one obtains 
\begin{equation}\label{finalstep}
|\Delta \b(x)|\leq C \,y\,\|\b\|_Z+C \,h\,\|\partial \b\|_{L^\infty}+\frac{h^2\,|\alpha-\beta| }2\left(\| D(\overline\partial \u_y)\|_{L^\infty}+y\| D(\overline\partial \v_y) \|_{L^\infty}\right)
\end{equation}
We now use the first inequality in Lemma \ref{poissonBMO} with $\g= \overline\partial \u_y$. Indeed, by the linearity of all the involved operators
$$
\aligned
\u_y=P_y\ast \b \hspace{1cm}
&\Rightarrow\hspace{1cm}\overline\partial \u_y= P_y\ast \overline\partial \b
\endaligned$$
Now, since $\partial \b\in L^\infty$ we have $\overline\partial \b\in BMO$ and therefore $\b\in Z$, so Lemma \ref{poissonZ} applies,
$$
\|D(\overline\partial \u_y)\|_{L^\infty}=\|D(P_y\ast \overline\partial \b)\|_{L^\infty}\leq C \,\frac{\| \b\|_Z}{y}.$$
For $\g=\overline\partial \v_y$, we proceed similarly, and observe that  
$$
\aligned
\v_y=\partial_y\u_y=\partial_yP_y\ast \b\hspace{1cm}
&\Rightarrow\hspace{1cm}\overline\partial \v_y= \partial_yP_y\ast \overline\partial \b.
\endaligned$$
Hence, one may combine Lemmas \ref{higherorder} and \ref{poissonZ} to get
$$
\|D(\overline\partial \v_y)\|_{L^\infty}=\|D(\partial_yP_y\ast \overline\partial \b)\|_{L^\infty}\leq C \,\frac{\|\b\|_Z}{y^2}.$$
We now plug the above bounds into \eqref{finalstep},
$$
|\Delta \b(x)|
\leq C \,y\,\|\b\|_Z+C \,h\,\|\partial \b\|_{L^\infty}+
C \,\frac{h^2\,|\alpha-\beta| }{2y}
$$
and choose $y=h$ to get $|\Delta \b(x)|\leq C \,h\,\|\partial \b\|_{L^\infty}$. So $\b\in R$ and $\|\b\|_{R}\leq C\,\|\partial \b\|_{L^\infty}$. The claim follows in the case $\b\in C_c(\C)$. \end{proof}

\noindent
The following corollary, Theorem A in the introduction, is a way of putting together Theorems \ref{Qimpliesboundedpartial}, \ref{partialfboundedimpliesQ}, \ref{Rimpliesboundeddb} and \ref{partialfboundedimpliesR}.

\begin{coro}
Let $\b:\R^2\to\R^2$ be a continuous vector field. The following conditions are equivalent:
\begin{itemize}
\item[(a)] $\b\in R$
\item[(b)] $\b\in \bar{Q}$
\item[(c)] $\b$ is differentiable a.e., $\partial \b\in L^\infty$  and $|\b(x)|\leq C|x|\,\log|x|$ as $|x|\to\infty$.
\end{itemize}
Moreover, in case this happens, then $\|\b\|_{\bar{Q}}\simeq\|\b\|_R\simeq \|\partial \b\|_{L^\infty}\simeq \|\div\b\|_{L^\infty}+\|\curl\b\|_{L^\infty}$.
\end{coro}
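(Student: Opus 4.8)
The plan is to obtain the corollary by assembling the four theorems proved in Sections~\ref{barQsection} and~\ref{Rsection}, together with the elementary identity $2\,\partial\b=\div\b+i\,\curl\b$. No new analysis is required; the only point needing a little care is the regularity hypothesis in the two converse theorems.

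\textbf{The equivalences.} For $(b)\Rightarrow(c)$, Theorem~\ref{Qimpliesboundedpartial} gives directly that $\b\in\bar{Q}$ is differentiable a.e.\ with $\partial\b\in L^\infty$, and Proposition~\ref{LipsbarQsZ} gives $\b\in Z$; since Zygmund vector fields have at most $|x|\,\log|x|$ growth at infinity (this is exactly the fact used, e.g., in the proof of Theorem~\ref{Qimpliesboundedpartial}), the growth bound of $(c)$ follows. For $(c)\Rightarrow(b)$ I would first upgrade $(c)$ to $\b\in W^{1,1}_{loc}$: writing $\div\b=2\,\Re\,\partial\b$ and $\curl\b=2\,\Im\,\partial\b$, both of these are in $L^\infty$, so applying Lemma~\ref{curldiv} to $\varphi\b$ for smooth cutoffs $\varphi$ (the continuity of $\b$ absorbing the $\nabla\varphi$ terms) yields $D\b\in BMO_{loc}\subset L^1_{loc}$; in particular the ``differentiable a.e.'' clause of $(c)$ is automatic once $\partial\b\in L^\infty$. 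Now Theorem~\ref{partialfboundedimpliesQ} applies and gives $\b\in\bar{Q}$. The equivalence $(a)\Leftrightarrow(c)$ is proved in exactly the same way, using Theorem~\ref{Rimpliesboundeddb} in place of Theorem~\ref{Qimpliesboundedpartial}, Theorem~\ref{partialfboundedimpliesR} in place of Theorem~\ref{partialfboundedimpliesQ}, and the inequality $\|\b\|_Z\le c\,\|\b\|_R$ (stated after Definition~\ref{definitionRn=2}) to get the growth bound from $(a)$.

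\textbf{The norm comparisons.} Here one simply chains the quantitative bounds. Theorem~\ref{Qimpliesboundedpartial} gives $\|\partial\b\|_{L^\infty}\le\frac12\|\b\|_{\bar{Q}}$ while Theorem~\ref{partialfboundedimpliesQ} gives $\|\b\|_{\bar{Q}}\le C\,\|\partial\b\|_{L^\infty}$, so $\|\b\|_{\bar{Q}}\simeq\|\partial\b\|_{L^\infty}$; likewise $\|\b\|_{R}\simeq\|\partial\b\|_{L^\infty}$ from Theorems~\ref{Rimpliesboundeddb} and~\ref{partialfboundedimpliesR}. Finally, from $2\,\partial\b=\div\b+i\,\curl\b$ one has pointwise $|\div\b|\le 2|\partial\b|$, $|\curl\b|\le 2|\partial\b|$, and $2|\partial\b|=\sqrt{(\div\b)^2+(\curl\b)^2}\le|\div\b|+|\curl\b|$, whence $\|\partial\b\|_{L^\infty}\simeq\|\div\b\|_{L^\infty}+\|\curl\b\|_{L^\infty}$. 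Combining these gives all the stated $\simeq$'s.

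\textbf{Main obstacle.} There is no substantial obstacle: the entire content sits in Theorems~\ref{Qimpliesboundedpartial}, \ref{partialfboundedimpliesQ}, \ref{Rimpliesboundeddb} and~\ref{partialfboundedimpliesR}. The one thing requiring attention is making the hypotheses of the two converse theorems match what $(c)$ provides — passing from ``continuous with $\partial\b\in L^\infty$'' to ``$\b\in W^{1,1}_{loc}$'' via Lemma~\ref{curldiv} — and observing that $|x|\,\log|x|$ and $|x|\,\log(e+|x|)$ are comparable for large $|x|$, the bounded behaviour near the origin being harmless by continuity.
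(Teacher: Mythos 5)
Your proposal is correct and follows exactly the paper's route: the corollary is obtained by assembling Theorems \ref{Qimpliesboundedpartial}, \ref{partialfboundedimpliesQ}, \ref{Rimpliesboundeddb} and \ref{partialfboundedimpliesR}, with the last norm equivalence coming from the pointwise identity $2\,\partial\b=\div\b+i\,\curl\b$. Your extra bookkeeping (deriving the growth bound from the Zygmund property and upgrading $(c)$ to the $W^{1,1}_{loc}$ hypothesis via Lemma \ref{curldiv}) is exactly the right way to make the hypotheses of the converse theorems match, and is consistent with how the paper implicitly uses them.
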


\noindent
As explained at the beginning of this section, the absence of complex conjugation in $\R^n$ when $n>2$ seems to make the $R$ class more suitable for higher dimensional counterparts. In order to build them, one may replace the rotation factor $e^{i\theta}$ in Definition \ref{definitionRn=2} by rotations not only in the $Ox_1,x_2$ plane, but on any of the coordinate planes $Ox_i,x_j$. For this, let us introduce the set ${\mathcal J}_n=\{J_{i,j}\}_{1\leq i<j\leq n}$ of matrices $J_{i,j}\in\R^{n\times n}$ defined by 
$$\aligned J_{i,j}e_i&=-e_j\\ J_{i,j}e_j &= e_i\\J_{i,j}e_k&=e_k\,,\,\,k\neq i,j\endaligned$$
where $e_1,\dots,e_n$ is the canonical basis in $\R^n$. When $n=2$, ${\mathcal J}_n$ contains only the matrix
$$\left(\begin{array}{cc}0&1\\-1&0\end{array}\right)$$
which is nothing but the rotation $e^{-i\frac{\pi}2}$. More in general, ${\mathcal J}_n$ contains $\frac{n(n-1)}{2}$ elements. 

\begin{lem}\label{Rnfailure}
Suppose that $n\geq 3$. Let $\b:\R^n\to\R^n$ be a vector field, and assume that $x$ is a differentiability point. If
\begin{equation}\label{Rnattempt}
\limsup_{|h|=|k|\to 0}\sup_{J\in {\mathcal J}_n\cup\{\Id\}} \left|\frac{\langle \b(x+h)-\b(x),Jk\rangle}{|h|\,|k|}-\frac{\langle \b(x+k)-\b(x), Jh\rangle}{|h|\,|k|}\right|\leq C_0
\end{equation}
then also $|D\b(x)|\leq C\,C_0$ for some dimensional constant $C$. 
\end{lem}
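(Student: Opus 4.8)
The plan is to linearize the hypothesis at $x$ and then read off the entries of the matrix $M:=D\b(x)$ by plugging the canonical basis vectors into the resulting algebraic inequality, treating $J=\Id$ and the $J=J_{i,j}$ separately. Since $\b$ is differentiable at $x$, writing $\b(x+h)-\b(x)=Mh+o(|h|)$ and letting $h=t\hat h$, $k=t\hat k$ tend to $0$ along fixed unit directions kills the error terms, so \eqref{Rnattempt} is equivalent to the purely linear-algebraic statement
$$\bigl|\langle Mh,Jk\rangle-\langle Mk,Jh\rangle\bigr|\le C_0\,|h|\,|k|\qquad\text{for all }h,k\in\R^n,\ J\in{\mathcal J}_n\cup\{\Id\},$$
the passage from unit vectors to arbitrary $h,k$ being bilinearity and homogeneity. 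Since $\langle Mh,Jk\rangle-\langle Mk,Jh\rangle=h^t\bigl(M^tJ-J^tM\bigr)k$, this says precisely that $\|M^tJ-J^tM\|\le C_0$ in operator norm for every admissible $J$ (any fixed matrix norm works, since they are all comparable with dimensional constants).

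First, taking $J=\Id$ gives $\|M-M^t\|\le C_0$. Decompose $M=S+K$ into its symmetric and antisymmetric parts, so $\|K\|\le C_0/2$ and it remains to estimate $S$. Next, each $J_{i,j}$ is orthogonal, hence $\|J_{i,j}\|=\|J_{i,j}^t\|=1$; substituting $M=S+K$ yields $M^tJ_{i,j}-J_{i,j}^tM=\bigl(SJ_{i,j}-J_{i,j}^tS\bigr)-\bigl(KJ_{i,j}+J_{i,j}^tK\bigr)$, and since $\|KJ_{i,j}\|,\|J_{i,j}^tK\|\le\|K\|\le C_0/2$ we obtain $\|SJ_{i,j}-J_{i,j}^tS\|\le 2C_0$ for all $1\le i<j\le n$.

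Now compute $T:=SJ_{i,j}-J_{i,j}^tS$ entrywise from $J_{i,j}e_i=-e_j$, $J_{i,j}e_j=e_i$, $J_{i,j}e_k=e_k$ ($k\ne i,j$). Writing $s_{ab}=\langle Se_a,e_b\rangle=s_{ba}$, a direct check shows that $T$ is antisymmetric and that its only possibly nonzero entries are
$$T_{ij}=s_{ii}+s_{jj},\qquad T_{ib}=s_{ib}+s_{jb}\ \ (b\ne i,j),\qquad T_{jb}=s_{jb}-s_{ib}\ \ (b\ne i,j),$$
together with their transposes. Since every entry of a matrix is dominated by its operator norm, all of these are $\le 2C_0$ in absolute value. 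Here the hypothesis $n\ge3$ enters twice. For the diagonal: from $|s_{ii}+s_{jj}|,|s_{jj}+s_{kk}|,|s_{ii}+s_{kk}|\le 2C_0$ with $i,j,k$ distinct, the identity $2s_{ii}=(s_{ii}+s_{jj})+(s_{ii}+s_{kk})-(s_{jj}+s_{kk})$ gives $|s_{ii}|\le 3C_0$. For the off-diagonal: given $i\ne b$, pick any $j\notin\{i,b\}$; then $|s_{ib}+s_{jb}|\le 2C_0$ and $|s_{jb}-s_{ib}|\le 2C_0$ force $|s_{ib}|\le 2C_0$. Hence every entry of $S$ is $O(C_0)$, so $|S|\le C(n)\,C_0$, and therefore $|D\b(x)|=|M|\le|S|+|K|\le C(n)\,C_0$, as claimed.

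The computation of $T$ is routine; the only things that need care are the two appearances of $n\ge3$ above. In fact the argument also shows why this is exactly where dimension matters: for $n=2$ the set ${\mathcal J}_2$ consists of a single matrix, the relation it produces controls only $s_{11}+s_{22}=\div\b(x)$ (not $s_{11}$ and $s_{22}$ separately), there is no off-diagonal relation at all, and combining this with the bound on $K$ one recovers merely a bound on $\partial\b(x)$ — that is, the class $R$ rather than the Lipschitz class.
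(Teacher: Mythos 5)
Your proof is correct and follows essentially the same route as the paper's: linearize at $x$ using differentiability, use $J=\Id$ to bound the antisymmetric part, and use the entries of $SJ_{i,j}-J_{i,j}^tS$ (which are exactly the paper's quantities $\partial_i\b_i+\partial_j\b_j$, $\partial_i\b_k+\partial_k\b_j$, $\partial_j\b_k-\partial_k\b_i$ after peeling off the curl) together with $n\ge 3$ to solve for each entry of the symmetric part. Splitting $M=S+K$ at the outset rather than adding the curl relations at the end is only an organizational difference, and your closing remark about why the argument degenerates to $\partial\b$ when $n=2$ is a nice sanity check consistent with the paper's discussion.
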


\begin{proof}
Since $x$ is a differentiability point, 
$$
\aligned
\limsup_{|h|=|k|\to 0}&\left|\frac{\langle \b(x+h)-\b(x),Jk\rangle}{|h|\,|k|}-\frac{\langle \b(x+k)-\b(x), Jh\rangle}{|h|\,|k|}\right|\\
&=\limsup_{|h|=|k|\to 0} \left|\frac{\langle D\b(x)h, Jk\rangle -\langle D\b(x)k, Jh\rangle}{|h|\,|k|}\right|\\
&=\sup_{|h|=|k|=1} |\langle D\b(x)h, Jk\rangle -\langle D\b(x)k, Jh\rangle|=\sup_{|h|=|k|=1} |\langle h, (D^t\b(x)J -J^t D\b(x))k \rangle|\\
\endaligned$$
When taking $J=\Id$ one recovers the curl matrix $D\b(x)-D^t\b(x)$, 
$$
\langle D\b(x)h, Jk\rangle -\langle D\b(x)k, Jh\rangle = \langle h, (D^t\b(x) J-J^tD\b(x))k\rangle=\langle h, (D^t\b(x)-D\b(x))k\rangle
$$
Let us now take $J=J_{i,j}$ for a given pair $1\leq i<j\leq n$. We get
$$
\aligned
\langle D\b(x)e_i, Je_j\rangle -\langle D\b(x)e_j, Je_i\rangle &= \langle \partial_i \b, e_i\rangle + \langle \partial_j \b, e_j\rangle= \partial_i\b_i+\partial_j\b_j\\
\langle D\b(x)e_i, Je_k\rangle -\langle D\b(x)e_k, Je_i\rangle &=\langle \partial_i \b, e_k\rangle+\langle \partial_k \b, e_j\rangle=\partial_i\b_k+\partial_k\b_j,\hspace{1cm}k\neq i,j\\
\langle D\b(x)e_j, Je_k\rangle -\langle D\b(x)e_k, Je_j\rangle &=\langle \partial_j \b, e_k\rangle-\langle \partial_k \b, e_i\rangle=\partial_j\b_k-\partial_k\b_i,\hspace{1cm}k\neq i,j
\endaligned$$
Suming up the second quantity with $-\partial_i\b_k+\partial_k\b_i$, and the third with $-\partial_j\b_k+\partial_k\b_j$ (both of which come from $D\b-D^t\b$), we get that both $\partial_k\b_j+\partial_k\b_i$ and $-\partial_k\b_i +\partial_k\b_j$ are bounded by multiples of $C_0$, which means that $\partial_k\b_i,\partial_k\b_j$ are bounded by multiples of $C_0$ whenever $k\neq i,j$. Moving now $i, j$ we obtain the same sort of boundedness for all non-diagonal elements of $Db$. Also, note that the boundedness of all pairs $\partial_i\b_i+\partial_j\b_j$ implies that of all diagonal elements. This finishes the proof. 
\end{proof}

\noindent
The above result shows that the class of vector fields $\b:\R^n\to\R^n$ satisfying \eqref{Rnattempt} reduces, when $n>2$, to Lipschitz vector fields. In contrast, when $n=2$, this class is much larger: this can be deduced from Lemma \ref{Qnonlipschitz}, together with the fact that in the plane one has $\bar{Q}=R$. This suggests it is not a good idea to build higher dimensional counterparts to $R$ in this way, because the class of vector fields one obtains is included into the Lipschitz ones, which are well understood. 

\section{Extending to higher dimensions: the class $R_0$}\label{R^nsection}

\noindent
Lemma \ref{boundedcurldiv} gives light to another fact: one may separate $\curl\b$ from $\div\b$ by simply choosing different values of $\theta$. This is the starting point to our following observation. Let us fix an integer $n\geq 2$. 

\begin{lem}\label{pointwisecurl}
Let $\b:\R^n\to\R^n$ be a vector field, and assume that $x$ is a differentiability point of $\b$. Then
$$
\limsup_{|h|,|k|\to 0} \frac{|\langle \b(x+h)-\b(x), k\rangle-\langle \b(x+k)-\b(x), h\rangle |}{|h|\,|k|}= |D\b(x)-D^t\b(x)|.
$$
\end{lem}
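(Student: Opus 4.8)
The statement is the natural $\R^n$ analogue of the ``$\curl$ part'' of Lemma \ref{boundedcurldiv}, obtained by taking $\theta=0$ there but keeping arbitrary dimension. Since $x$ is a differentiability point, I would first reduce the $\limsup$ as $|h|,|k|\to 0$ to a supremum over unit vectors of the \emph{linear} quantity. Writing $\b(x+h)-\b(x)=D\b(x)h+o(|h|)$ and likewise for $k$, the error terms contribute
$\langle o(|h|),k\rangle/(|h||k|)+\langle o(|k|),h\rangle/(|h||k|)$,
which by Cauchy--Schwarz is $o(1)$ uniformly in the directions of $h$ and $k$ as $|h|,|k|\to 0$. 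Hence
$$
\limsup_{|h|,|k|\to 0} \frac{|\langle \b(x+h)-\b(x), k\rangle-\langle \b(x+k)-\b(x), h\rangle |}{|h|\,|k|}
=\sup_{|h|=|k|=1}\big|\langle D\b(x)h,k\rangle-\langle D\b(x)k,h\rangle\big|.
$$

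**Main computation.** The next step is purely linear-algebraic: rewrite the bilinear form. With $M=D\b(x)$,
$$
\langle Mh,k\rangle-\langle Mk,h\rangle=\langle Mh,k\rangle-\langle h,Mk\rangle=\langle (M-M^t)h,k\rangle,
$$
so the supremum above equals $\sup_{|h|=|k|=1}|\langle (M-M^t)h,k\rangle|$, which is exactly the operator norm $\|M-M^t\|$. I would then note that the paper's notation $|D\b(x)-D^t\b(x)|$ denotes precisely this matrix norm (consistent with how $|Hu|$, $|D\v|$ etc.\ are used throughout Section~\ref{poisson}), so $\sup_{|h|=|k|=1}|\langle (M-M^t)h,k\rangle|=|M-M^t|=|D\b(x)-D^t\b(x)|$, giving the claimed equality.

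**Where the subtlety lies.** None of the individual steps is hard, but the one deserving care is the interchange of $\limsup$ with the normalization by $|h||k|$: one must check the differentiability remainder is controlled \emph{uniformly} in the angular variables, not merely for each fixed direction. This is immediate here because differentiability at $x$ gives a single function $\varepsilon(h)\to 0$ with $|\b(x+h)-\b(x)-D\b(x)h|\le\varepsilon(h)|h|$, and $|\langle o(|h|),k\rangle|\le\varepsilon(h)|h||k|$; so the error is bounded by $\varepsilon(h)+\varepsilon(k)$ regardless of directions. For the lower bound one simply takes $h,k$ along directions $h_0,k_0$ realizing (or nearly realizing) the supremum of $|\langle(M-M^t)h,k\rangle|$ on the unit sphere; for the upper bound one uses the uniform estimate just described together with Cauchy--Schwarz. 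I would present the two inequalities in this order, mirroring the structure of the proof of Lemma \ref{boundedcurldiv}.
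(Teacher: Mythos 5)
Your proof is correct and follows essentially the same route as the paper's: linearize using differentiability at $x$, observe that the bilinear form collapses to $\langle (D\b(x)-D^t\b(x))h,k\rangle$, bound the remainders by Cauchy--Schwarz for the upper inequality, and choose (near-)extremal directions for the lower one. Your explicit remark that the differentiability remainder is controlled uniformly in the directions of $h$ and $k$ makes precise a point the paper leaves implicit, but it is the same argument.
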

\begin{proof}
We first observe that if $\b$ is differentiable at $x$, then
$$\aligned
\langle \b(x+h)-\b(x), k\rangle 
&= \langle D\b(x)h, k\rangle+ \langle o(|h|),k\rangle \\
%&= \langle \frac{Db(x)+D^tb(x)}2 \, h, k\rangle + \langle \frac{Db(x)-D^tb(x)}2 \, h, k\rangle+\langle o(|h|),k\rangle \\
%&= \langle \left(\frac{Db(x)+D^tb(x)}2 -\frac{\div b}{n}\,\Id\,\right)\, h, k\rangle + \langle \frac{Db(x)-D^tb(x)}2 \, h, k\rangle +\frac{\div b(x)}{n}\,\langle h, k\rangle+\langle o(|h|),k\rangle
\endaligned$$
Now, after exchanging the roles of $h$ and $k$, we also have
$$\aligned
\langle \b(x+k)-\b(x), h\rangle 
&= \langle D\b(x) k, h\rangle+\langle o(|k|), h\rangle \\
%&= \langle \frac{Db(x)+D^tb(x)}2 \,k,  h\rangle + \langle \frac{Db(x)-D^tb(x)}2 \,k,  h\rangle+\langle o(|k|), h\rangle \\
%&= \langle \left(\frac{Db(x)+D^tb(x)}2 -\frac{\div b}{n}\,\Id\,\right)\, k, h\rangle + \langle \frac{Db(x)-D^tb(x)}2 \, k, h\rangle +\frac{\div b(x)}{n}\,\langle k,h\rangle+\langle o(|k|), h\rangle
\endaligned$$
%Using now that the matrix
%$ \frac{Db(x)+D^tb(x)}2$
%$$\overline\partial b(x) = \left(\frac{Db(x)+D^tb(x)}2 -\frac{\div b}{n}\,\Id\,\right)$$
%is symmetric, and that $ \frac{Db(x)-D^tb(x)}2$ is skew-symmetric, it then follows that
Thus
$$
\frac{\langle \b(x+h)-\b(x), k\rangle-\langle \b(x+k)-\b(x), h\rangle }{|h|\,|k|}=
\frac{ \langle (D\b(x)-D^t\b(x))\, h, k\rangle }{|h|\,|k|}+\frac{\langle o(|h|),k\rangle}{|h|\,|k|}-\frac{\langle o(|k|),h\rangle}{|k|\,|h|}.
$$
and therefore one immediately gets
$$\limsup_{|h|,|k|\to 0}\frac{|\langle \b(x+h)-\b(x), k\rangle-\langle \b(x+k)-\b(x), h\rangle| }{|h|\,|k|}\leq |D\b(x)-D^t\b(x)|.$$
For the converse inequality, we recall that
$$|D\b(x)-D^t\b(x)|=\sup_{h,k\neq 0}\frac{ \langle (D\b(x)-D^t\b(x))\, h, k\rangle }{|h|\,|k|}$$
so we can pick two sequences $h_m, k_m\to 0$ such that
$$\aligned
|D\b(x)-D^t\b(x)|&=\lim_{m\to\infty}\frac{ \langle (D\b(x)-D^t\b(x))\, h_m, k_m\rangle }{|h_m|\,|k_m|}\\
&=\lim_{m\to\infty}\frac{|\langle \b(x+h_m)-\b(x), k_m\rangle-\langle \b(x+k_m)-\b(x), h_m\rangle |}{|h_m|\,|k_m|}\\
&\leq\limsup_{|h|,|k|\to 0}\frac{|\langle \b(x+h)-\b(x), k\rangle-\langle \b(x+k)-\b(x), h\rangle |}{|h|\,|k|}
\endaligned$$
and the claim follows.
\end{proof}

\noindent
The above result motivates the following definition.

\begin{defi}
We say that a continuous function $\b:\R^n\to\R^n$ belongs to the class $R_0$ if 
$$\frac{\left|\langle \b(x+h)-\b(x), k\rangle-\langle \b(x+k)-\b(x), h\rangle\right|}{|h|\,|k|}\leq C $$
for each pair $h,k$ such that $|h|=|k|\neq 0$. The best constant $C$ will be denoted as $\|\b\|_{R_0}$.
\end{defi}

\noindent
When $n=1$, the only options are $h=k$ (which gives nothing) or $h=-k$, which gives us that in fact $R_0$ is exactly the class of Zygmund functions. When $n=2$, though, the above definition suggests that $R_0$ is larger than $R$. For a general $n>1$, taking $k=-h$ in the definition we obtain that
$$\frac{\left|\langle \b(x+h)+ \b(x-h)-2\b(x), h\rangle\right|}{|h|^2}\leq C $$
which suggests that there may be some connection between $R_0$ and $Z$.
 
\begin{prop}\label{zygmund}
One has $R_0\subset Z$, and moreover $\|\b\|_Z\leq 4\,\|\b\|_{R_0}$. 
\end{prop}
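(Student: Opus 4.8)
The plan is to extract the Zygmund estimate directly from the $R_0$ inequality by a clever choice of the increments $h$ and $k$, without any regularity beyond continuity. The naive attempt of setting $k=-h$ only controls the component of $\b(x+h)+\b(x-h)-2\b(x)$ in the direction of $h$, which is not enough to bound its full norm. The fix is to run the $R_0$ inequality at two nearby base points and combine the outcomes so that the "missing" transverse component gets captured as well.

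Concretely, fix $x$ and $h$, and write $\Delta^2_h\b(x)=\b(x+h)+\b(x-h)-2\b(x)$; we want $|\Delta^2_h\b(x)|\le 4\|\b\|_{R_0}|h|$. First I would apply the definition of $R_0$ at the base point $x$ with the pair $(h,-h)$ (legal since $|h|=|-h|$), giving $|\langle \Delta^2_h\b(x),h\rangle|\le \|\b\|_{R_0}|h|^2$, which controls the longitudinal part. To control a transverse direction $w\perp h$ with $|w|=|h|$, I would instead apply the $R_0$ inequality at the shifted base point $x'=x$ but with increments chosen along $h$ and $w$ in a way that, after adding/subtracting several instances (e.g. pairs $(h, w)$, $(-h, w)$, or increments of the form $h+w$ and $h-w$, all of equal length $\sqrt2\,|h|$ or $|h|$ as needed), the second difference of $\b$ along $h$ gets paired against $w$. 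The arithmetic is the routine part: summing a bounded number of $R_0$ inequalities, each contributing $\|\b\|_{R_0}\cdot(\text{length})^2$, and tracking that the total coefficient in front of $\|\b\|_{R_0}|h|^2$ is at most $4$ after dividing by $|h|$ appropriately. Taking the supremum over unit directions (longitudinal plus an orthonormal frame of the hyperplane $h^\perp$) then bounds $|\Delta^2_h\b(x)|$ and hence gives $\|\b\|_Z\le 4\|\b\|_{R_0}$.

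The one genuine obstacle is bookkeeping the direction handling in arbitrary dimension $n$: unlike in $\R^2$, where "transverse to $h$" is a single direction, here $h^\perp$ is an $(n-1)$-dimensional space, and I must make sure the chosen increments have exactly equal norms (the $R_0$ definition requires $|h|=|k|$) while still isolating $\langle \Delta^2_h\b(x),w\rangle$ for every unit $w\in h^\perp$. A clean way to do this is: for each unit $w\perp h$, use the vectors $u=h$ and $v=w$ in the $R_0$ inequality evaluated at base points $x$ and $x-h$ (both with increment pairs of common length $|h|$), subtract, and observe the cross terms involving $D$-type first differences of $\b$ along $w$ telescope against those along $h$; the leftover is exactly $\langle \b(x+h)+\b(x-h)-2\b(x),w\rangle$ up to sign, bounded by $2\|\b\|_{R_0}|h|^2$ (two applications). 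Combining with the longitudinal bound and using $|z|^2=\langle z,h/|h|\rangle^2+\sum_j\langle z,w_j\rangle^2$ for an orthonormal basis $\{h/|h|,w_1,\dots,w_{n-1}\}$, one must be slightly careful that summing over $n-1$ transverse directions does not blow up the constant; the correct move is to bound $|z|\le \max$ over the finitely many test directions only after noting $|z|=\sup_{|w|=1}\langle z,w\rangle$ and that $w$ itself can be taken as the single optimal unit vector, which either lies along $h$ or can be decomposed — so in fact two applications of $R_0$ (one longitudinal, one along the optimal transverse direction) suffice, yielding the clean constant $4$.
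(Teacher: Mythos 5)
Your overall strategy is the right one, but the specific combination you commit to in your final paragraph does not close up. Applying the $R_0$ inequality with the pair $(h,w)$ at the base point $x$ and again at the base point $x-h$, and subtracting, produces
$\langle \b(x+h)+\b(x-h)-2\b(x),w\rangle-\langle \b(x+w)-\b(x)-\b(x+w-h)+\b(x-h),h\rangle$.
The cross terms do \emph{not} telescope: they combine into the mixed second difference $\langle \Delta_h\Delta_w\b(x-h),h\rangle$, which is not controlled by the $R_0$ condition (it is precisely the kind of quantity one is trying to bound). So the claim that ``the leftover is exactly $\langle\b(x+h)+\b(x-h)-2\b(x),w\rangle$'' fails, and this is the one step that actually carries the proof.

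The repair is one of the alternatives you mention only in passing. Either (i) argue as the paper does: take $a\perp b$ with $|a|=|b|$ and apply $R_0$ at the two base points $z+a$ and $z-a$ with the \emph{diagonal} increment pairs $(b-a,-b-a)$ and $(b+a,-b+a)$ respectively (legal because $|b-a|=|b+a|$ when $a\perp b$), so that all four evaluations of $\b$ land on the same two points $z\pm b$ and the unwanted terms cancel upon summation; one more use of $R_0$ with the pair $(b,-b)$ then gives $|\langle\b(z+a)+\b(z-a)-2\b(z),b\rangle|\le 3\,\|\b\|_{R_0}\,|a|^2$, and decomposing a general direction $v$ into its components along $a$ and along $a^{\perp}$ yields the constant $4$. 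Or (ii), simpler and in fact sharper: apply $R_0$ at the \emph{same} base point $x$ with the pairs $(h,w)$ and $(-h,w)$ for an arbitrary $w$ with $|w|=|h|$, and \emph{add}; the terms $\mp\langle\b(x+w)-\b(x),h\rangle$ cancel exactly, leaving $|\langle\b(x+h)+\b(x-h)-2\b(x),w\rangle|\le 2\,\|\b\|_{R_0}\,|h|^2$ for every such $w$, whence $\|\b\|_Z\le 2\,\|\b\|_{R_0}$ with no need to separate longitudinal and transverse directions at all.
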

\begin{proof}
Let us assume for a while that $a,b\in\R^n$ are such that  $\langle a, b\rangle=0$. Then by Pitagoras $|a+b|=|a-b|$ and thus we can use condition $R_0$ for $x=z+a$, $h=b-a$ and $k=-b-a$. We get
$$\aligned
|\langle \b(z+b)-\b(z+a), -b-a \rangle &- \langle \b(z-b) -\b(z+a), b-a\rangle |\\
&= |\langle \b(z-b)-\b(z+b), a\rangle - \langle \b(z+b)+\b(z-b), b\rangle + 2\langle \b(z+a), b\rangle|\\
&\leq \|\b\|_{R_0}\,|-b-a|\,|b-a| =\|\b\|_{R_0}\,(|a|^2+|b|^2)\endaligned$$
Similarly, for $x=z-a$, $h=b+a$ and $k=-b+a$, 
$$\aligned
|\langle \b(z+b)-\b(z-a), -b+a \rangle &- \langle \b(z-b) -\b(z-a), b+a\rangle| \\
&= |-\langle \b(z-b)-\b(z+b), a\rangle - \langle \b(z+b)+\b(z-b), b\rangle + 2\langle \b(z-a), b\rangle|\\
&\leq \|\b\|_{R_0} \,|-b+a|\,|b+a|= \|\b\|_{R_0}\,(|a|^2+|b|^2)
\endaligned$$
Summing up the above inequalities,
$$
|\langle \b(z+a)+\b(z-a), b\rangle-\langle \b(z+b)+\b(z-b), b\rangle|\leq  \|\b\|_{R_0}\,(|a|^2+|b|^2)
$$
and as a consequence
$$
|\langle \b(z+a)+\b(z-a)-2\b(z), b\rangle-\langle \b(z+b)+\b(z-b)-2\b(z), b\rangle|\leq \|\b\|_{R_0}\,(|a|^2+|b|^2)
$$
whence
$$\aligned
|\langle \b(z+a)+\b(z-a)-2\b(z), b\rangle|
&\leq |\langle \b(z+b)+\b(z-b)-2\b(z), b\rangle|+ \|\b\|_{R_0}\,(|a|^2+|b|^2)\\
&=|\langle \b(z-b)-\b(z), b\rangle -\langle \b(z+b)-\b(z), -b\rangle|+  \|\b\|_{R_0}\,(|a|^2+|b|^2)\\
&\leq  \|\b\|_{R_0}\,(|a|^2+2|b|^2)\\
\endaligned$$
Let us now take a vector $v\in\R^n$, and decompose it as $v=v_1+v_2$ with $v_1=\langle v, a\rangle \frac{a}{|a|^2}$. Then $\langle v_2, a\rangle=0$ so that taking $b=\frac{v_2}{|v_2|}\,|a|$ we certainly have $\langle a,b\rangle=0$ and $|a|=|b|$, and so we can apply what we proved before. Namely,
$$
\aligned
|\langle \b(z+a)&+\b(z-a)-2\b(z), v\rangle|\\
&\leq |\langle \b(z+a)+\b(z-a)-2\b(z), v_1\rangle|+ |\langle \b(z+a)+\b(z-a)-2\b(z), v_2\rangle|\\
&= |\langle \b(z+a)+\b(z-a)-2\b(z), a\rangle|\,\frac{|\langle v,a\rangle|}{|a|^2}+|\langle \b(z+a)+\b(z-a)-2\b(z), b\rangle|\,\frac{|v_2|}{|a|}\\
&\leq \|\b\|_{R_0}\,|a|^2\,\frac{|\langle v,a\rangle|}{|a|^2}+\|\b\|_{R_0}\,(|a|^2+2|b|^2)\,\frac{|v_2|}{|a|}\\
&\leq \|\b\|_{R_0}\, |\langle v,a\rangle| +\|\b\|_{R_0}\,3|a|\,|v_2|\leq 4\,\|\b\|_{R_0}\,|a|\,|v| \\
\endaligned
$$
and the claim follows.
\end{proof}

\begin{rem}
In the above proof, condition $R_0$ has only been used for precise pairs $h$ and $k$ for which either $h=k$ or $\langle h, k\rangle=0$ with $|h|=|k|$. It will be clear that the class of vector fields one obtains with this restriction is exactly the same $R_0$. This can be seen as a consequence of Theorems \ref{R0givesboundedcurl} and \ref{AboundedimpliesR0} below.
\end{rem}

\noindent
Among the consequences, we deduce that each element of $R_0$ has growth at most $|x|\,\log|x|$, as $|x|\to\infty$, and also that each element of $R_0$ has $t\,\log\frac1t$ local modulus of continuity. %Thus, once more, to each vector field $\b\in R_0$ we are legitimate to associate a flow of homeomorphisms, which is uniquely defined thanks to the classical Osgood condition. Moreover, the modulus of continuity of $\b$ guarantees that the flow is H\"older continuous in the space variable, with an exponent $\alpha=\alpha(t)$ that decreases exponentially in time.  \\
%\\
Arguing as in Reimann's Proposition 5 for $n=1$, functions in the $R_0$ class can be shown to satisfy the following extended version of condition $R_0$,
\begin{equation}\label{log}
\frac{|(\b(x+h)-\b(x))k-(\b(x+k)-\b(x))h|}{|hk|}\leq \|\b\|_{R_0}\,\left(\frac32+\frac1{2\log 2}\,\left|\log\frac{|h|}{|k|}\right|\right)
\end{equation}
provided that $h\cdot k>0$ (replace $3/2$ by $5/2$ in case you want to allow $h\cdot k<0$). The extension of this fact to functions in the higher dimensional $R_0$ class works as follows.

\begin{prop}
There exists $C=C(n)\geq 1$ such that if  $\b\in R_0$ then
$$\frac{|\langle \b(x+h)-\b(x),k\rangle -\langle \b(x+k)-\b(x),h\rangle |}{|h|\,|k|}\leq C\,\|\b\|_{R_0}\,\left(1+ \left|\log\frac{|h|}{|k|}\right|\right)
$$
whenever $h, k\in\R^n$ are non-zero.
\end{prop}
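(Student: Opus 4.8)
The plan is to reduce the general inequality to the equal-length case that is built into the definition of $R_0$, by a dyadic rescaling argument in one of the two variables. Write
$$B_x(h,k)=\langle \b(x+h)-\b(x),k\rangle-\langle \b(x+k)-\b(x),h\rangle,\qquad \Phi(h,k)=\frac{B_x(h,k)}{|h|\,|k|}.$$
Since $B_x$ is antisymmetric in $(h,k)$, both $|\Phi(h,k)|$ and $|\log(|h|/|k|)|$ are unchanged under swapping $h$ and $k$, so one may assume $|h|\geq |k|$. The definition of $R_0$ says exactly that $|\Phi(h,k)|\leq \|\b\|_{R_0}$ whenever $|h|=|k|$, with no restriction on directions.

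First I would record the algebraic identity
$$B_x(h,\mu k)-\mu\,B_x(h,k)=-\bigl\langle\, \b(x+\mu k)-\mu\,\b(x+k)+(\mu-1)\,\b(x)\,,\,h\,\bigr\rangle,\qquad \mu>0,$$
obtained by expanding both sides. Call $G(\mu)$ the vector in the inner product ($x$ and $k$ being frozen). Dividing by $\mu|h||k|$ gives $|\Phi(h,\mu k)-\Phi(h,k)|\leq |G(\mu)|/(\mu|k|)$, so the whole matter comes down to bounding $|G(\mu)|$ for $\mu\in[1,2]$ by a constant multiple of $\|\b\|_{R_0}|k|$. The crucial point --- and the step I expect to be the main obstacle --- is that $\mu\mapsto G(\mu)$ is a \emph{Zygmund} function on $[1,2]$: the affine-in-$\mu$ part of $G$ contributes nothing to a second difference, so $G(\mu+t)+G(\mu-t)-2G(\mu)=\b(y+\eta)+\b(y-\eta)-2\b(y)$ with $y=x+\mu k$, $\eta=tk$, whence $|G(\mu+t)+G(\mu-t)-2G(\mu)|\leq \|\b\|_Z\,|t|\,|k|$. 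Since also $G(1)=0$ and $|G(2)|=|\b(x+2k)-2\b(x+k)+\b(x)|\leq \|\b\|_Z|k|$, a bisection argument --- estimating the deviation of a Zygmund function from the chord through its endpoint values on $[1,2]$, by summing the geometric series of midpoint corrections --- yields $|G(\mu)|\leq C\|\b\|_Z|k|$ for all $\mu\in[1,2]$, with $C$ absolute. Invoking Proposition \ref{zygmund} ($\|\b\|_Z\leq 4\|\b\|_{R_0}$) we arrive at the scaling estimate
$$|\Phi(h,\mu k)-\Phi(h,k)|\leq C\,\|\b\|_{R_0},\qquad \mu\in[1,2],$$
with a constant $C$ that does not depend on $h$, $k$, $x$ or $\mu$, so that it may be iterated with $k$ replaced successively by $2k,4k,\dots$.

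The conclusion is then a telescoping. Given $|h|\geq |k|$, set $m=\lfloor\log_2(|h|/|k|)\rfloor\geq 0$ and $\rho=2^{-m}|h|/|k|\in[1,2)$, and run through the chain $\kappa_0=k,\ \kappa_1=2k,\ \kappa_2=4k,\dots,\ \kappa_m=2^mk,\ \kappa_{m+1}=\rho\,2^mk$, whose last term $k^\ast:=\kappa_{m+1}$ has length exactly $|h|$. Applying the scaling estimate $m$ times (with $\mu=2$) and once more (with $\mu=\rho$), and then using $|\Phi(h,k^\ast)|\leq\|\b\|_{R_0}$ since $|h|=|k^\ast|$, we obtain
$$|\Phi(h,k)|\leq |\Phi(h,k^\ast)|+\sum_{j=0}^{m}\bigl|\Phi(h,\kappa_{j+1})-\Phi(h,\kappa_j)\bigr|\leq \|\b\|_{R_0}+(m+1)\,C\,\|\b\|_{R_0}\leq C'\,\|\b\|_{R_0}\Bigl(1+\log_2\tfrac{|h|}{|k|}\Bigr),$$
which, since $|h|\geq|k|$, is the asserted bound with $C(n)$ a (in fact absolute) constant.

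Apart from the Zygmund regularity of $\mu\mapsto G(\mu)$ and the chord estimate, every step is routine bookkeeping. The one point that needs care is that the constant in $|G(\mu)|\leq C\|\b\|_Z|k|$ must be independent of $|k|$ and of the base point $x$, so that the scaling estimate can be fed back into itself at every dyadic scale with the same constant; this is transparent from the derivation, since only the global quantity $\|\b\|_Z$ (equivalently $\|\b\|_{R_0}$, by Proposition \ref{zygmund}) enters.
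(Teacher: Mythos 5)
Your argument is correct. It rests on the same underlying idea as the paper's proof --- reduce to the equal-length case of the $R_0$ condition by rescaling one of the two increments, with the rescaling cost controlled logarithmically through the Zygmund property of $\b$ (Proposition \ref{zygmund}) --- but the execution is genuinely different. The paper freezes the direction of one increment, passes to the scalar function $g(t)=\langle \b(x+tb)/|b|,a/|a|\rangle$, and then invokes Reimann's one-dimensional divided-difference estimate for Zygmund functions (his Proposition 5) as a black box to convert the change of scale from $|b|$ to $|a|$ into the factor $1+|\log(|h|/|k|)|$. You instead work directly with the antisymmetric form $B_x(h,k)$, isolate the effect of rescaling $k\mapsto\mu k$ in the exact identity $B_x(h,\mu k)-\mu B_x(h,k)=-\langle G(\mu),h\rangle$, observe that $\mu\mapsto G(\mu)$ is Zygmund with $G(1)=0$, prove the needed chord-deviation bound by bisection, and accumulate the logarithm by dyadic telescoping. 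In effect you re-derive (a vector-valued form of) Reimann's one-dimensional lemma in situ rather than citing it, which makes your proof self-contained and makes transparent exactly where the logarithm comes from (one uniform constant per doubling of $|k|$); the paper's version is shorter but delegates the heart of the matter to an external reference. All the steps you flag as delicate --- the uniformity of the constant in $|G(\mu)|\leq C\,\|\b\|_Z\,|k|$ over $x$ and $|k|$, and the reduction to $|h|\geq|k|$ by antisymmetry --- check out.
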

\begin{proof}
Let us fix two non-zero vectors $a,b\in\R^n$, choose $y=\frac{a}{|a|}$, and observe that
$$
\left|\frac{\langle \b(x+|a|y)-\b(x), |a|\frac{b}{|b|}\rangle}{|a|^2}-\frac{\langle \b(x+|a|\frac{b}{|b|})-\b(x),|a|y\rangle}{|a|^2}\right|\leq \|\b\|_{R_0}
$$
while
\begin{equation}\label{diff}
\left|\frac{\langle \b(x+b)-\b(x), a\rangle }{|a||b|}-\frac{\langle \b(x+|a|\frac{b}{|b|})-\b(x),|a|y\rangle}{|a|^2}\right|
=\left|\langle\frac{\b(x+b)-\b(x)}{|b|}-\frac{\b(x+|a|\frac{b}{|b|})-\b(x)}{|a|}, \frac{a}{|a|}\rangle\right|
\end{equation}
In order to control this quantity, we use the auxiliary function $g:\R\to\R$ defined as $g(t)=\langle \frac{\b(x+tb)}{|b|}, \frac{a}{|a|}\rangle$. Since $\b\in R_0$ implies $\b\in Z$, we deduce for each fixed $t$ that
$$\aligned
\left|\frac{g(t+s)+g(t-s)-2g(t)}{s}\right|
&=\left| \frac{\b(x+(t+s)b)+\b(x+(t-s)b)-2\b(x+tb),a\rangle}{s|a||b|}\right|\\
&\leq \|\b\|_Z\endaligned$$
so that $g\in Z$ with $\|g\|_Z\leq \|\b\|_Z\leq 4\|\b\|_{R_0}$. As a consequence, and arguing as in Reimann's proof of Proposition 5 (part C) we get for all $r\in\R$ that if $t,s>0$ then
$$
\left|\frac{g(r+t)-g(r)}{t}-\frac{g(r+s)-g(r)}{s}\right|\leq \|g\|_Z\,\left(\frac32+\frac1{2\log 2}\,\left|\log\frac{t}{s}\right|\right)
$$ 
In particular, if $r=0$ and $s<0<t$,
$$\aligned
\left|\frac{g(t)-g(0)}{t}-\frac{g(s)-g(0)}{s}\right|
&\leq
\left|\frac{g(t)-g(0)}{t}-\frac{g(-s)-g(0)}{-s}\right|+\left|\frac{g(-s)-g(0)}{-s}-\frac{g(s)-g(0)}{s}\right|\\
&\leq \|g\|_Z\,\left(\frac32+\frac1{2\log 2}\,\left|\log\frac{t}{-s}\right|\right)+\|g\|_Z\\
&= \|g\|_Z\,\left(\frac52+\frac1{2\log 2}\,\left|\log\frac{t}{-s}\right|\right).\endaligned$$ 
We now go back to \eqref{diff}, and apply the above estimate with $s=1$, $t=\frac{|a|}{|b|}$. We obtain
$$\aligned
\left|\langle \frac{\b(x+b)-\b(x)}{|b|}, \frac{a}{|a|}\rangle-\langle \frac{\b(x+\frac{|a|}{|b|}b)-\b(x)}{|a|}, \frac{a}{|a|}\rangle\right|
&=\left|\frac{g(t)-g(0)}{t}-\frac{g(s)-g(0)}{s}\right|\\
&\leq 4\|\b\|_{R_0}\,\left(\frac52+\frac1{2\log 2}\,\left|\log\frac{|a|}{|b|}\right|\right)
\endaligned
$$
and the claim follows.
\end{proof}

%\begin{quest}
%Prove that for each $\epsilon>0$ there exists $\delta>0$ such that if $1-\delta\leq \frac{|h|}{|k|}\leq 1+\delta$ then one can take $C\leq 1+\epsilon$ in the above result.
%\end{quest}

\noindent
As it was done in the previous sections for the classes $\bar{Q}$ and $R$, we are interested in a differential characterization of the class $R_0$. It is clear from Lemma \ref{pointwisecurl} that if $\b$ is a smooth element of $R_0$ then
$$\|D\b-D^t\b\|_{L^\infty}\leq \|\b\|_{R_0}$$
However, this time the situation for a non necessarily smooth $\b\in R_0$ is more delicate than in the previous sections, because differentiability points may not even exist. Indeed, with $D\b-D^t\b$ there is not enough information to control all of $D\b$. Observe also that if $n=2$ then $R_0$ is strictly larger than $R$. 

\begin{theo}\label{R0givesboundedcurl}
Let $\b\in R_0$. Then the distribution $D\b-D^t\b$ is an element of $L^\infty(\R^n)$, and
$$\|D\b-D^t\b\|_{L^\infty} \leq C(n)\,\|\b\|_{R_0}$$
for some constant $C(n)$ that depends only on $n$.
\end{theo}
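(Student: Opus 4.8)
The plan is to mimic the proof of Theorem~\ref{Qimpliesboundedpartial}: reduce to compactly supported fields, regularize $\b$ by convolution with the Poisson kernel, apply the elementary smooth bound $\|D\b-D^t\b\|_{L^\infty}\leq\|\b\|_{R_0}$ recorded after Lemma~\ref{pointwisecurl}, and finally transfer the resulting uniform $L^\infty$ estimates back to the distribution $D\b-D^t\b$. For the reduction to compact support, let $g_t$ be the cut-off from \eqref{gt}. Since $R_0\subset Z$ (Proposition~\ref{zygmund}), every $\b\in R_0$ satisfies $|\b(x)|\leq C|x|\log|x|$ as $|x|\to\infty$, and the Mean Value Theorem gives $|\Delta_hg_t(x)|\leq C|h|/(t|x|\log|x|)$; exactly as in Theorem~\ref{Qimpliesboundedpartial} one deduces $g_t\b\in R_0$ with $\|g_t\b\|_{R_0}\leq\|\b\|_{R_0}+C/t$. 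Granting the theorem for compactly supported fields, the distributional identity $(\curl(g_t\b))_{ij}=g_t(\curl\b)_{ij}+(\partial_ig_t)\b^j-(\partial_jg_t)\b^i$ together with $g_t\equiv1$ on $\{|x|<t\}$ gives $\|D\b-D^t\b\|_{L^\infty(B_t)}\leq C(n)(\|\b\|_{R_0}+C/t)$, and letting $t\to\infty$ yields the general case. So from now on $\b$ is compactly supported.

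Set $\u(x,y)=P_y\ast\b(x)$, a harmonic function on $\R^{n+1}_+$, smooth there and continuous up to the boundary. The crucial observation is that for each fixed $y>0$ the smooth field $\u(\cdot,y):\R^n\to\R^n$ again lies in $R_0$, with $\|\u(\cdot,y)\|_{R_0}\leq\|\b\|_{R_0}$: indeed, writing $\u(z,y)=\int P_y(w)\b(z-w)\,dw$, the $R_0$-difference of $\u(\cdot,y)$ at $x$ with increments $h,k$ (where $|h|=|k|\neq0$) equals the $P_y$-average of the $R_0$-differences of $\b$ at the points $x-w$, and $\|P_y\|_{L^1}=1$. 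Since $\u(\cdot,y)$ is smooth it is differentiable everywhere, so the remark following Lemma~\ref{pointwisecurl} applies pointwise and gives
\[
\big\|D_x\u(\cdot,y)-D^t_x\u(\cdot,y)\big\|_{L^\infty(\R^n)}\leq\|\u(\cdot,y)\|_{R_0}\leq\|\b\|_{R_0},
\]
uniformly in $y>0$.

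To finish, fix indices $i,j$ and consider $w_{ij}(x,y)=\partial_{x_i}\u^j(x,y)-\partial_{x_j}\u^i(x,y)$. This is harmonic on $\R^{n+1}_+$, being a derivative of the harmonic components of $\u$, and the previous estimate shows $\sup_{y>0}\|w_{ij}(\cdot,y)\|_{L^\infty}\leq\|\b\|_{R_0}<\infty$. By Proposition~\ref{hardy} there is $g_{ij}\in L^\infty(\R^n)$ with $w_{ij}=P_y\ast g_{ij}$ and $\|g_{ij}\|_{L^\infty}\leq\|\b\|_{R_0}$. On the other hand, since $\b$ is continuous and compactly supported, differentiation commutes with the convolution and $w_{ij}(\cdot,y)=P_y\ast(D\b-D^t\b)_{ij}$ in the sense of distributions. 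Therefore $P_y\ast g_{ij}=P_y\ast(D\b-D^t\b)_{ij}$ for all $y>0$; letting $y\to0$, and noting that $(D\b-D^t\b)_{ij}$ is compactly supported and $g_{ij}\in L^\infty$, both sides converge in $\mathcal{D}'(\R^n)$ to $(D\b-D^t\b)_{ij}$ and to $g_{ij}$ respectively, whence $(D\b-D^t\b)_{ij}=g_{ij}\in L^\infty$. Thus $D\b-D^t\b\in L^\infty(\R^n)$ with $\|D\b-D^t\b\|_{L^\infty}\leq C(n)\,\|\b\|_{R_0}$, the constant $C(n)$ only accounting for the comparison between the entrywise and the operator norms of the antisymmetric matrix $D\b-D^t\b$.

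The main obstacle is exactly the one emphasized before the statement: an element of $R_0$ need not be differentiable at any point, so Lemma~\ref{pointwisecurl} cannot be invoked directly. Poisson regularization resolves this, but two points deserve care: convolution with the probability kernel $P_y$ must not enlarge the $R_0$-constant, so that the smooth bound holds with a constant independent of $y$; and the uniform $L^\infty$-bounds on $\curl\u(\cdot,y)$ must be identified with the boundary distribution $D\b-D^t\b$, which is where the Hardy-space description of bounded harmonic functions (Proposition~\ref{hardy}) and the convergence $P_y\ast T\to T$ in $\mathcal{D}'$ come in.
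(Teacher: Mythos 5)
Your proposal is correct and follows the same overall strategy as the paper: reduce to compact support via the cut-offs $g_t$, regularize with the Poisson kernel, observe that $\|\u(\cdot,y)\|_{R_0}\leq\|P_y\|_{L^1}\|\b\|_{R_0}=\|\b\|_{R_0}$, apply Lemma \ref{pointwisecurl} to the smooth slices to get the uniform bound $\sup_{y>0}\|D\u(\cdot,y)-D^t\u(\cdot,y)\|_{L^\infty}\leq\|\b\|_{R_0}$, and then recover the boundary distribution. Where you genuinely diverge is in the endgame. The paper first shows that each $\partial_{x_i}u^j-\partial_{x_j}u^i$ also lies in $L^p(\R^n)$ uniformly in $y$ for some finite $p$ (this requires the pointwise decay $|D\u(x,y)|\lesssim|x|^{-(n+1)}$ off the support of $\b$), places it in the harmonic Hardy space $h^p$, extracts boundary values $v_{i,j}\in L^p$ with $L^p$ convergence, passes to an a.e.\ convergent subsequence to upgrade $v_{i,j}$ to $L^\infty$, and finally identifies $v_{i,j}$ with the distributional curl. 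You instead invoke the $p=\infty$ case of Proposition \ref{hardy} directly — a bounded harmonic function on $\R^{n+1}_+$ is the Poisson integral of an $L^\infty$ boundary function — and identify $g_{ij}$ with $(D\b-D^t\b)_{ij}$ by letting $y\to0$ in $\mathcal{D}'$ on both sides of $P_y\ast g_{ij}=P_y\ast(D\b-D^t\b)_{ij}$; this uses only that $P_y\ast\varphi\to\varphi$ in $L^1$ for test functions and that the compactly supported distribution $(D\b-D^t\b)_{ij}$ acts on Poisson extensions of test functions, a point the paper itself sets up when deriving \eqref{eqdistr}. Your route is shorter and dispenses entirely with the decay estimate and the detour through finite $p$; the paper's route yields the same conclusion but carries along $L^p$ information that is not actually needed for this theorem. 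Both arguments are sound.
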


\noindent
The proof of this result is very similar to that of Theorem \ref{Qimpliesboundedpartial}. However, some special attention is needed to stop the argument at an earlier point.  %For $\b\in C_c(\R^n;\R^n)$  compactly supported in the unit ball $\B\subset\R^n$, one has that $\u=P_y\ast \b$ is harmonic in the upper half space $\R^{n+1}_+$ and continuous up to the boundary. Moreover, $D\u$ is a well defined function in $L^p(\R^n\setminus 2\B)$ uniformly in $y$, for each $\frac{n}{n+1}<p<\infty$. In particular,
%$$D\u-D^t\u=D(P_y\ast\b)-D^t(P_y\ast \b)=(DP_y-D^tP_y)\ast \b=P_y\ast(D\b-D^t\b)$$
%and the last convolution is a well defined distribution. When $\b\in R_0$, we further have $\u\in R_0$ whence $D\u-D^t\u\in L^\infty(\R^{n+1}_+)$. Hence $D\u-D^t\u\in L^p(\R^n;\R^{n\times n})$ uniformly in $y$, for some $1<p<\infty$. As an element of the harmonic Hardy space $h^p(\R^{n+1}_+)$, $D\u-D^t\u$ must have boundary values $\g\in L^p(\R^n)$. Moreover, we must have $D\u-D^t\u=P_y\ast \g$. Now, both $D\u-D^t\u=P_y\ast (D\b-D^t\b)$ and $P_y\ast \g$ represent the same function in $h^p(\R^{n+1}_+)$ for some $p>1$, thus $D\b-D^t\b=\g$ and so $D\b-D^t\b$ is actually an $L^p(\R^n)$ function. Once we know that $D\b-D^t\b\in L^p$,  having $P_y\ast (D\b-D^t\b)\in L^\infty$ uniformly in $y$ immediately gives that $D\b-D^t\b\in L^\infty$. The claim follows.
 
%Let us now explain this with more detail.

\begin{proof}
We will first assume that $\b$ has compact support. Let us call $\u=P_y\ast \b$. We will write $\u=(u^1,\dots, u^n)$ and similarly $\b=(b^1,\dots,b^n)$. One immediately sees that $\partial_{x_i}b^j$ is a well defined distribution, because $\b$ has compact support. Moreover, this distribution can be easily extended to act against testing functions with polynomial decay, as for instance Poisson extensions of smooth compactly supported functions. So the action $\langle\partial_{x_i} b^j,  P_y\ast\varphi \rangle$ is well defined whenever $\varphi\in C^\infty_c$. One has
$$\aligned
\langle\partial_{x_i}u^j-\partial_{x_j}u^i,\varphi\rangle
&=-\langle u^j, \partial_{x_i}\varphi\rangle +\langle u^i, \partial_{x_j}\varphi \rangle\\
&=-\langle P_y\ast b^j, \partial_{x_i}\varphi\rangle +\langle P_y\ast b^i, \partial_{x_j}\varphi \rangle\\
&=-\langle  b^j, P_y\ast\partial_{x_i}\varphi\rangle +\langle  b^i, P_y\ast\partial_{x_j}\varphi \rangle\\
&=-\langle b^j, \partial_{x_i}(P_y\ast\varphi)\rangle+\langle b^i,\partial_{x_j}(P_y\ast\varphi)\rangle\\
&=\langle \partial_{x_i} b^j,  P_y\ast\varphi \rangle-\langle \partial_{x_j} b^i, P_y\ast\varphi \rangle\\
&=\langle \partial_{x_i}b^j- \partial_{x_j} b^i, P_y\ast\varphi \rangle
\endaligned$$
In particular, we have the following equality of distributions,
\begin{equation}\label{eqdistr}
\partial_{x_i}u^j-\partial_{x_j}u^i = \partial_{x_i}(P_y\ast b^j)-\partial_{x_j}(P_y\ast b^i)=P_y\ast (\partial_{x_i}b^j-\partial_{x_j}b^i)
\end{equation}
which is equivalent to say that $D\u-D^t\u=P_y\ast(D\b-D^t\b)$. The convolution operator $P_y\ast$ commutes with translations. Therefore it is not hard to see that
$$\b\in R_0\hspace{1cm}\Rightarrow\hspace{1cm}\u(\cdot, y)\in R_0,\,\,\text{and}\,\,\|\u(\cdot, y)\|_{R_0}\leq \|P_y\|_{L^1(\R^n)}\,\|\b\|_{R_0}=\|\b\|_{R_0}$$
where we used that $\|P_y\|_{L^1(\R^n)}=1$. However, $\u$ is smooth. Thus, every point $x$ is a differentiability point of $\u(\cdot, y)$, and therefore by Lemma \ref{pointwisecurl}
\begin{equation}\label{bd}
|D\u(x,y)-D^t\u(x,y)|\leq \|\u(\cdot, y)\|_{R_0}\leq \|\b\|_{R_0}.
\end{equation}
In particular, this shows that each slice of $\partial_{x_i}u^j-\partial_{x_j}u^i$ belongs to $L^\infty(\R^n)$, and this happens uniformly in $y>0$. We now show that one also has $\partial_iu^j-\partial_ju^i\in L^p(\R^n)$ for some $p\in (1,\infty)$. Indeed, from $\u=P_y\ast \b$ we see that $\partial_{x_i} \u=(\partial_{x_i}P_y)\ast \b$ and therefore
$$\aligned
|D \u(x,y)|
&\leq C\,\int_{\supp \b} |DP_y(x-z)|\,|\b(z)|\,dz \\
&= C\,\int_{\supp \b} \frac{|(n+1)c_n\,y(x-z)|}{(y^2+|x-z|^2)^\frac{n+3}{2}}\,|\b(z)|\,dz \\
&=C\,\int_{\supp \b} \frac{|(n+1)c_n\,y(x-z)|}{y^2+|x-z|^2}\,\frac{|\b(z)|}{(y^2+|x-z|^2)^\frac{n+1}{2}}\,dz\leq c_n\,\int \frac{|\b(z)|}{|x-z|^{n+1}}\,dz \\
\endaligned$$ 
As a consequence, if $\supp\b\subset B(0,R)$ and $|x|>2R$ then
$$
|\partial_{x_i}u^j(x,y)-\partial_{x_j}u^i(x,y)|\leq \frac{c_n\,\|\b\|_{L^\infty}}{|x|^{n+1}}$$
From this, if $p>\frac{n}{n+1}$ then $\|\partial_{x_i}u^j(\cdot,y)-\partial_{x_j}u^i(\cdot,y)\|_{L^p(\R^n\setminus B(0,2R))}$ is bounded uniformly in $y$. Combining this fact with \eqref{bd}, one gets that 
$$\sup_{y>0}\|\partial_{x_i}u^j(\cdot,y)-\partial_{x_j}u^i(\cdot,y)\|_{L^p(\R^n)}\leq C(n,R).$$
As a consequence, $\partial_{x_i}u^j -\partial_{x_j}u^i$ belongs to the Hardy space of harmonic functions $h^p(\R^{n+1}_+)$. As such, we can infer that there is $v_{i,j}\in L^p(\R^n)$ such that $\partial_{x_i}u^j -\partial_{x_j}u^i =P_y\ast v_{i,j}$ and moreover 
$$\lim_{y\to 0}\| (\partial_{x_i}u^j -\partial_{x_j}u^i )-(v_{i,j})\|_{L^p(\R^n)}=0.$$
In particular, there is a subsequence of heights $y_n\to 0$ for which the converge is pointwise,
\begin{equation}\label{pointwlimit}
\lim_{n\to \infty} \partial_{x_i}u^j -\partial_{x_j}u^i =v_{i,j}\hspace{1cm}a.e.
\end{equation}
which combined with \eqref{bd} gives us that $v_{i,j}\in L^\infty(\R^n)$. Finally, since \eqref{eqdistr} holds for all testing functions $\varphi\in L^{p'}(\R^n)$, we see that
$$\lim_{y\to 0}\|P_y\ast (\partial_{x_i}b^j -\partial_{x_j}b^i )-v_{i,j}\|_{L^p(\R^n)}=0$$
This forces $\|P_y\ast(\partial_{x_i}b^j -\partial_{x_j}b^i )\|_{L^p}$ to remain bounded as $y\to 0$, which in turn forces the distribution $\partial_{x_i}b^j -\partial_{x_j}b^i$ to belong to $L^p(\R^n)$, and therefore by Fatou's Theorem $v_{i,j}=\partial_{x_i}b^j -\partial_{x_j}b^i $ almost everywhere. Moreover, since $v_{i,j}\in L^\infty(\R^n)$ we also have $\partial_{x_i}b^j -\partial_{x_j}b^i \in L^\infty(\R^n)$, and 
$$\|\partial_{x_i}b^j -\partial_{x_j}b^i \|_{L^\infty}=\|v_{i,j}\|_{L^\infty}\leq \sup_{y>0}\|\partial_{x_i}u^j -\partial_{x_j}u^i \|_{L^\infty}\leq \|P_y\|_1\,\|\b\|_{R_0}$$ so the claim follows. \\
\\
In order to remove the assumption on $\supp\b$, we proceed as in Theorem \ref{Qimpliesboundedpartial}. So we start by recalling that $\b\in R_0$ implies $\b\in Z$, whence
$$L=\limsup_{|x|\to\infty}\frac{|\b(x)|}{|x|\,\log|x|}<\infty.$$
Setting $\Delta_h\varphi(x)=\varphi(x+h)-\varphi(x)$ and $\tau_h\varphi(x)=\varphi(x+h)$ and taking $g=g_t$ as in \eqref{gt} one has 
$$\aligned
\langle \Delta_h(g\b), k \rangle&-\langle \Delta_k(g\b),h\rangle\\
&=  \tau_hg\,\langle \Delta_h\b, k\rangle -\tau_kg\,\langle \Delta_k\b,h\rangle
+ \langle \b,k\rangle\,\Delta_hg-\langle \b, h\rangle\,\Delta_kg\\
&=  \tau_hg\,(\langle \Delta_h\b, k\rangle -\langle \Delta_k\b,h\rangle)+(\tau_hg-\tau_kg)\,\langle \Delta_k\b,h\rangle
+ \langle \b,k\rangle\,\Delta_hg-\langle \b, h\rangle\,\Delta_kg
\endaligned$$
If $|x|$ is large, from the mean value theorem we see that
$$|\Delta_kg(x)|\leq \frac{C|k|}{t |x|\,\log|x|}$$
$$|\Delta_hg(x)|\leq \frac{C|h|}{t |x|\,\log|x|}$$
$$|\tau_hg(x)-\tau_kg(x)|\leq \frac{C|h-k|}{t|x|\,\log|x|}$$
This, together with the growth of $\b$ at infinity, gives
$$
\left|\frac{\Delta_h(g\b),k\rangle-\langle\Delta_k(g\b),h\rangle}{|h|\,|k|}\right|\leq \|\b\|_{R_0}+\frac{C}t
$$
and so $g\b \in R_0$ and has compact support. From the first part of the proof, we deduce that $D(g\b)-D^t(g\b)\in L^\infty$, with norm les than $\|g\b\|_{R_0}$. But from
$$D(g\b )-D^t(g\b)=\b\otimes \nabla g-\nabla g\otimes \b+ g\,(D\b-D^t\b)$$
one gets at points $|x|\leq t$ that
$$D(g\b )-D^t(g\b)=D\b-D^t\b$$
whence for $|x|\leq t$ one has
$$\aligned
|D\b(x)-D^t\b(x)| 
&\leq \|D(g\b)-D^t(g\b)\|_{L^\infty}  \\
&\leq \|g\b\|_{R_0}  \\
&\leq \|\b\|_{R_0}+\frac{C}t\leq C(n) \|D\b-D^t\b\|_{L^\infty} + \frac{C}t.
\endaligned$$
The proof finishes by letting $t\to\infty$. 
\end{proof}

\noindent
Theorem \ref{R0givesboundedcurl} provides a sufficient condition for $L^\infty$ bounds for the distributional curl $D\b-D^t\b$. It says nothing about the differentiability of $\b$, nor the total pointwise differential $D\b$. For instance, if $u\in W^{1,1}(\R^n)$ and $\b=\nabla u$, then $D\b-D^t\b=0$ in the sense of distributions, but $\b$ may not be differentiable almost everywhere. That is, zero curl does not imply pointwise differentiability a.e..  Hence, at this point it is not clear why should any $\b\in R_0$ be differentiable almost everywhere. This absence of regularity makes it harder to state Theorem \ref{R0givesboundedcurl} in the same terms we stated Theorems \ref{Qimpliesboundedpartial} and \ref{Rimpliesboundeddb} above.  We solve this obstruction in the following result, which is a slight modification of  Theorem \ref{R0givesboundedcurl}. It refers to a slightly smaller subclass of $R_0$, given in terms of the divergence $\div\b$.
 
\begin{theo}\label{boundedcurl}
Let $\b:\R^n\to\R^n$ belong to the class $R_0$. Assume that $\div\b\in L^p_{loc}(\R^n)$.
\begin{itemize}
\item If $1<p<\infty$, then $\b$ has $L^p_{loc}(\R^n)$ distributional derivatives, and $\curl\b\in L^\infty(\R^n)$. 
\item If $p>n$, then one further has that $\b$ is differentiable almost everywhere.
\end{itemize}
\end{theo}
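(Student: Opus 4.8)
The bound $\curl\b\in L^\infty(\R^n)$, with $\|\curl\b\|_{L^\infty}\leq C(n)\,\|\b\|_{R_0}$, is exactly the conclusion of Theorem \ref{R0givesboundedcurl}, so nothing new is needed there. The plan for the rest is to feed this information, together with the divergence hypothesis, into Lemma \ref{curldiv}. Since that lemma is stated for compactly supported fields, I would first localize: given a ball $B\subset\R^n$, fix $\varphi\in C^\infty_c(\R^n)$ with $\varphi\equiv1$ on $B$, and consider $\varphi\b$, which is continuous and compactly supported. By the Leibniz rule,
\[
\curl(\varphi\b)=\varphi\,\curl\b+\b\otimes\nabla\varphi-\nabla\varphi\otimes\b,\qquad \div(\varphi\b)=\varphi\,\div\b+\langle\nabla\varphi,\b\rangle .
\]
Since $\b$ is continuous it is bounded on the compact set $\supp\nabla\varphi$, so the three terms $\b\otimes\nabla\varphi$, $\nabla\varphi\otimes\b$ and $\langle\nabla\varphi,\b\rangle$ are bounded and compactly supported, hence lie in $L^q$ for every $q\in[1,\infty]$. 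Moreover $\varphi\,\curl\b\in L^\infty$ is compactly supported by Theorem \ref{R0givesboundedcurl}, and $\varphi\,\div\b\in L^p$ by hypothesis. Therefore, as $1<p<\infty$, both $\curl(\varphi\b)$ and $\div(\varphi\b)$ belong to $L^p(\R^n)$.

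Now Lemma \ref{curldiv} applies to $\varphi\b$ and yields $D(\varphi\b)\in L^p(\R^n)$; since $\varphi\equiv1$ on $B$ we have $D(\varphi\b)=D\b$ there, so $D\b\in L^p(B)$. As $B$ is arbitrary, $D\b\in L^p_{loc}(\R^n)$, which is the first assertion. For the second assertion, assume $p>n$: then $\b\in W^{1,p}_{loc}(\R^n;\R^n)$, and a Sobolev vector field with exponent above the dimension is differentiable at almost every point (the same fact already used in the proof of Theorem \ref{Qimpliesboundedpartial}, see \cite[Theorem 2.21]{Kinn}); hence $\b$ is differentiable a.e.

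The genuinely substantive input here is Theorem \ref{R0givesboundedcurl}; the present statement is essentially a packaging of it. The point worth stressing is that $\curl\b\in L^\infty$ alone carries no information on the full differential $D\b$ — a vanishing curl forces no Sobolev regularity at all (e.g. $\b=\nabla u$ with $u\in W^{1,1}$ only) — so the divergence hypothesis is exactly what is needed to complete the Hodge data and make Lemma \ref{curldiv} effective. Once that is in place, the localization and the appeal to Sobolev differentiability are routine, and I do not anticipate a real obstacle.
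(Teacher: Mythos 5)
Your argument is correct and is essentially the proof given in the paper: both reduce to Theorem \ref{R0givesboundedcurl} for the $L^\infty$ bound on the curl, localize, and then invoke Lemma \ref{curldiv} together with the a.e.\ differentiability of $W^{1,p}_{loc}$ fields for $p>n$. The only (cosmetic) difference is that you cut off with a generic $\varphi\in C^\infty_c$ and control $\curl(\varphi\b)$ via the Leibniz rule, whereas the paper reuses its functions $g_t$ from \eqref{gt} and the fact that $g_t\b\in R_0$.
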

\begin{proof}
Let us first assume that $\b$ has compact support. If $\b\in R_0$ then we know from Theorem \ref{R0givesboundedcurl} that $D\b-D^t\b\in L^\infty$. Then, from the compact support we deduce that $D\b-D^t\b\in L^p$, and from $\div\b\in L^p$ and Lemma \ref{curldiv} we get that $D\b\in L^p$. The rest is standard real analysis. If $\b$ has not compact support, then using the functions $g_t$ from \eqref{gt} we see that $g_t\b$ is an element of $R_0$ with $L^p$ divergence, so using again Lemma \ref{curldiv} we get that $g_t\b$ has $L^p(\R^n)$ derivatives, which in turn ensures $D\b\in L^p_{loc}$. The differentiability a.e. is a well known result of classical real analysis, see for instance \cite{Ev}.
\end{proof}

\noindent
We also obtain the following counterpart to Theorem \ref{boundedcurl} in $\R^n$ for the case $p=\infty$. It states that vector fields in $R_0$ with bounded divergence must necessarily have $BMO$ derivatives and bounded curl. Let us recall that 
$$A\b = \frac{D\b-D^t\b}{2}+\frac{\div\b}{n}\,\Id.$$
It can also be seen as a counterpart to \cite[Proposition 15]{Rei}, as well as to Theorems \ref{Qimpliesboundedpartial} and \ref{Rimpliesboundeddb} above.

\begin{coro}
Let $\b:\R^n\to\R^n$ belong to the class $R_0$. Assume that $\div\b\in L^\infty(\R^n)$. Then $\b$ is differentiable almost everywhere, has $BMO(\R^n)$ distributional derivatives, and $A\b\in L^\infty(\R^n)$. 
\end{coro}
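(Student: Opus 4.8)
The plan is to derive the three conclusions by assembling Theorems~\ref{R0givesboundedcurl} and~\ref{boundedcurl} with Lemma~\ref{curldiv}, using the cutoffs $g_t$ from \eqref{gt} to pass through the compactly supported case.

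First I would dispose of the two easy parts. Since $\b\in R_0$, Proposition~\ref{zygmund} gives $\b\in Z$, so $\b$ has $|x|\,\log|x|$ growth at infinity. The hypothesis $\div\b\in L^\infty$ implies $\div\b\in L^p_{loc}$ for every finite $p$; choosing any $p>n$ and invoking the second bullet of Theorem~\ref{boundedcurl} shows that $\b$ is differentiable almost everywhere. On the other hand, Theorem~\ref{R0givesboundedcurl} already gives $D\b-D^t\b\in L^\infty$ with $\|D\b-D^t\b\|_{L^\infty}\le C(n)\,\|\b\|_{R_0}$; since $A\b=\tfrac12(D\b-D^t\b)+\tfrac1n(\div\b)\,\Id$ and $\div\b\in L^\infty$, this immediately yields $A\b\in L^\infty(\R^n)$ together with the estimate $\|A\b\|_{L^\infty}\le C\,(\|\div\b\|_{L^\infty}+\|\b\|_{R_0})$.

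The remaining point, namely $D\b\in BMO(\R^n)$, is where the cutoff argument enters. For fixed $t\ge 1$ I would set $\b_t=g_t\b$, which has compact support. As in the proof of Theorem~\ref{R0givesboundedcurl}, $\b_t\in R_0$ with $\|\b_t\|_{R_0}\le \|\b\|_{R_0}+C/t$. Moreover $\div\b_t=g_t\,\div\b+\langle\b,\nabla g_t\rangle$, and the Mean Value Theorem gives $|\nabla g_t(x)|\le C/(t|x|\,\log|x|)$, which against the $|x|\,\log|x|$ growth of $\b$ contributes only a term of size $C/t$; hence $\div\b_t\in L^\infty$ with $\|\div\b_t\|_{L^\infty}\le \|\div\b\|_{L^\infty}+C/t$. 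Now $\b_t$ is continuous, compactly supported, and both $\curl\b_t$ (bounded by Theorem~\ref{R0givesboundedcurl} applied to $\b_t$, i.e. by $C(n)\|\b_t\|_{R_0}$) and $\div\b_t$ lie in $L^\infty$, so Lemma~\ref{curldiv} with $p=\infty$ gives $D\b_t\in BMO$ with $\|D\b_t\|_{\ast}\le C(n)\,(\|\div\b\|_{L^\infty}+\|\b\|_{R_0})$, a bound independent of $t$.

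To finish I would globalize the estimate. Given an arbitrary ball $B=B(x_0,\delta)\subset\R^n$, choosing $t>|x_0|+\delta$ makes $g_t\equiv 1$ on $B$, so $D\b=D\b_t$ almost everywhere on $B$ (both are genuine $L^1_{loc}$ functions, by the first bullet of Theorem~\ref{boundedcurl}); hence the mean oscillation of $D\b$ over $B$ equals that of $D\b_t$, which is at most $\|D\b_t\|_{\ast}\le C(n)\,(\|\div\b\|_{L^\infty}+\|\b\|_{R_0})$. Taking the supremum over all balls $B$ yields $D\b\in BMO(\R^n)$ with the same bound, completing the proof. The only step demanding attention is keeping all constants uniform in the cutoff parameter $t$, which the estimates above guarantee; everything else is a direct application of machinery already in place.
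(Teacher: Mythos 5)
Your proposal is correct and follows essentially the same route as the paper: Theorem \ref{R0givesboundedcurl} for the bounded curl, Lemma \ref{curldiv} with $p=\infty$ for the $BMO$ derivatives (hence differentiability a.e.), and the $g_t$ cutoff to pass from the compactly supported case to the general one. You have merely filled in the localization details that the paper leaves implicit ("the proof for non compactly supported $\b$ goes similarly"), and done so correctly, with the constants uniform in $t$ as required.
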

\begin{proof}
We first assume that that $\b$ has compact support. Having also that $\b\in R_0$, we proved in Theorem \ref{R0givesboundedcurl} that also $\curl\b\in L^\infty$. It then follows from Lemma \ref{curldiv} that $D\b\in BMO$, and so the differentiability a.e. is automatic. The boundedness of $A\b$ is immediate. The proof for non compactly supported $\b$ goes similarly, since $g_t\b$ is compactly supported and also $g_t\b\in R_0$.
\end{proof}

\noindent
In the converse direction, we have the following result, which establishes a much netter counterpart to \cite[Proposition 12]{Rei} or Theorems \ref{partialfboundedimpliesQ} or \ref{partialfboundedimpliesR}. 

\begin{theo}\label{AboundedimpliesR0}
Let $\b \in W^{1,1}_{loc}(\R^n; \R^n)$ be a vector field. Assume that $\b $ is continuous, and that $|\b (x)|\leq O( |x|\log|x|)$ as $|x|\to\infty$. If there exists a constant $C>0$ such that
$$ \| A\b  \|_{L^\infty} \leq C$$
then $\b $ belongs to the $R_0$ class, and $\|\b \|_{R_0}\leq C'$ for some constant $C'$ depending only on $C$. 
\end{theo}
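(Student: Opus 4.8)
The plan is to mimic the structure of the proof of Theorem \ref{partialfboundedimpliesQ}, adapting it to arbitrary dimension and to the operator $A\b=\frac{D\b-D^t\b}{2}+\frac{\div\b}{n}\Id$. First I would reduce to the case of compactly supported $\b$ by the usual cutoff argument with the functions $g_t$ from \eqref{gt}: one checks that $A(g_t\b)=g_t\,A\b+(\text{terms involving }\nabla g_t\otimes\b-\b\otimes\nabla g_t\text{ and }(\b\cdot\nabla g_t)\Id)$, and the $|x|\log|x|$ growth of $\b$ together with the estimate $|\nabla g_t(x)|\le \frac{C}{t|x|\log|x|}$ makes all the extra terms $O(1/t)$ in $L^\infty$. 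So $\|A(g_t\b)\|_{L^\infty}\le C+\frac{C}{t}$, and since on $\{|x|\le t\}$ one has $g_t\b=\b$, any bound $\|g_t\b\|_{R_0}\le C'$ (uniform in $t$) transfers to $\b$ by localizing the $R_0$-difference quotient at an arbitrary triple $x,x+h,x+k$ and taking $t$ large.

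Next, with $\b\in C_c(\R^n)$, set $\u(x,y)=P_y\ast\b(x)$, harmonic on $\R^{n+1}_+$ and continuous up to the boundary. Fix unit vectors $\alpha,\beta$ and $h>0$, put $a=\alpha h$, $b=\beta h$, and for a vector field $\g$ define the linear difference operator
$$\Delta\g(x)=\langle\g(x+a)-\g(x),\beta\rangle-\langle\g(x+b)-\g(x),\alpha\rangle,$$
so that $\|\g\|_{R_0}$ is comparable to $\sup|\Delta\g|/h$ over all such choices. Writing $\langle D\g(x)\alpha,\beta\rangle-\langle D\g(x)\beta,\alpha\rangle=\langle(D\g-D^t\g)\alpha,\beta\rangle$, the fundamental theorem of calculus along the segments gives the representation
$$\Delta\g(x)=\Delta_{c}\g(x)+\Delta_{s}\g(x),$$
where $\Delta_c\g(x)=\int_0^h\langle(D\g-D^t\g)(x+\alpha s)\,\alpha,\beta\rangle\,ds$ is controlled by $\|D\g-D^t\g\|_{L^\infty}$, and $\Delta_s\g(x)=\int_0^h\big(\langle D\g(x+\alpha s)\beta,\alpha\rangle-\langle D\g(x+\beta s)\beta,\alpha\rangle+\langle(D\g-D^t\g)(x+\beta s)\beta,\alpha\rangle\cdots\big)$-type remainder which, after inserting $\gamma=\frac{\alpha-\beta}{|\alpha-\beta|}$ and integrating once more, is controlled by $h^2\,\|D(D\g-D^t\g)\|_{L^\infty}$ — i.e. by second derivatives of the antisymmetric part, hence by $\|\curl\g\|_\ast/y$ via Lemma \ref{poissonBMO}. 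The point is that $\curl\b=D\b-D^t\b\in L^\infty$ is part of the hypothesis (it is $2$ times the antisymmetric part of $A\b$, since $A\b=\partial\b$'s higher-dimensional analogue splits cleanly: the antisymmetric part of $A\b$ equals $\tfrac12\curl\b$ and the trace of $A\b$ equals $\div\b$), so $\curl\b\in L^\infty$ and $\div\b\in L^\infty$; by Lemma \ref{curldiv} this yields $D\b\in BMO$ and hence $\b\in Z$.

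Then I would run the three-term decomposition $\b(x)=\int_0^y t\,\w_t(x)\,dt-y\,\v_y(x)+\u_y(x)$ with $\u_y=\u(\cdot,y)$, $\v_y=\partial_y\u$, $\w_r=\partial_{yy}^2\u$, apply $\Delta$, and estimate: the $\w$-term by $|\Delta\w_t|\le 4\|\w_t\|_{L^\infty}\le C\|\b\|_Z/t$ (Lemma \ref{poissonZ}), giving $Cy\|\b\|_Z$; the "curl part" $\Delta_c$ of the $\u_y$ and $y\v_y$ terms by $\|D\u_y-D^t\u_y\|_{L^\infty}=\|P_y\ast\curl\b\|_{L^\infty}\le\|\curl\b\|_{L^\infty}$ and $\|\partial_y(P_y\ast\curl\b)\|_{L^\infty}\le C\|\curl\b\|_{L^\infty}/y$ (Lemma \ref{easyLinfty}), giving $Ch\|\curl\b\|_{L^\infty}$; and the remainder part $\Delta_s$ by $\tfrac{h^2|\alpha-\beta|}{2}\big(\|D(\curl\u_y)\|_{L^\infty}+y\|D(\curl\v_y)\|_{L^\infty}\big)\le C h^2(\|\curl\b\|_\ast/y+\|\curl\b\|_\ast/y)$ via Lemma \ref{poissonBMO} applied to $\curl\b\in BMO$. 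Summing and choosing $y=h$ gives $|\Delta\b(x)|\le C\,h\,(\|\curl\b\|_{L^\infty}+\|\b\|_Z)\le C'h\,\|A\b\|_{L^\infty}$ (using $\|\b\|_Z\lesssim\|D\b\|_\ast\lesssim\|\div\b\|_{L^\infty}+\|\curl\b\|_{L^\infty}\lesssim\|A\b\|_{L^\infty}$), hence $\b\in R_0$ with $\|\b\|_{R_0}\le C'\|A\b\|_{L^\infty}$.

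The main obstacle I anticipate is bookkeeping rather than conceptual: one must verify that in dimension $n\ge 3$ the "symmetric off-diagonal" pieces that appear when differentiating $\langle D\g(x+\alpha s)\beta,\alpha\rangle-\langle D\g(x+\beta s)\beta,\alpha\rangle$ are genuinely absorbed either into the antisymmetric part (so that only $\|\curl\g\|$ and its derivatives enter) or into a second-order remainder of size $h^2/y$, and \emph{not} into the full symmetric gradient, which we do not control pointwise. This is exactly the role played by the cancellation in $\Delta\g=\Delta_c\g+\Delta_s\g$: after the first integration the leading term is pure curl, and the leftover is a difference $\overline\partial$-type quantity between two nearby base points, which one integrates a second time along $\gamma$ to produce a gradient of $\curl\g$. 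Getting this split right in coordinates — and checking it is legitimate because $\u$ is smooth and $\b\in Z$ has $BMO$ derivatives, so all the integrations by parts and second-derivative bounds from Lemma \ref{higherorder} apply — is where the care is needed; everything else parallels Theorems \ref{partialfboundedimpliesQ} and \ref{partialfboundedimpliesR} verbatim.
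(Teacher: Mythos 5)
Your overall scheme is the paper's: reduce to compact support via $g_t$, pass to the Poisson extension, use the identity $\b(x)=\int_0^y t\,\w_t\,dt-y\,\v_y+\u_y$, and split the difference operator $\Delta$ into an $L^\infty$ part plus a second-order remainder, closing with $y=h$. The cutoff argument and the $\w$-term are fine. The problem is in your identification of the remainder.

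With $\Delta_c\g(x)=\int_0^h\langle(D\g-D^t\g)(x+\alpha s)\,\alpha,\beta\rangle\,ds$, what is actually left over is
$$\Delta_s\g(x)=\Delta\g(x)-\Delta_c\g(x)=\int_0^h\langle\bigl(D\g(x+\alpha s)-D\g(x+\beta s)\bigr)\beta,\alpha\rangle\,ds,$$
a difference of the \emph{full} gradient at two nearby points (the same happens if you instead subtract the curl at the base point $x+\beta s$). After the second integration along $\gamma$ this produces $\|D(D\g)\|_{L^\infty}$, i.e.\ the full Hessian, symmetric part included; your stated bound by $h^2\|D(D\g-D^t\g)\|_{L^\infty}$, and later by $\|D(\curl\u_y)\|_{L^\infty}$ via Lemma \ref{poissonBMO} applied to $\curl\b$, is not what comes out, and as written that step fails. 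It is repairable without changing your architecture: since $A\b\in L^\infty$ gives $D\b\in BMO$ (Lemma \ref{curldiv}) and hence $\b\in Z$, inequality \eqref{hessian1/y} yields $\|H\u(\cdot,y)\|_{L^\infty}\leq C\|\b\|_Z/y$ and Lemma \ref{higherorder} yields $\|D^2\v_y\|_{L^\infty}\leq C\|\b\|_Z/y^2$, so the remainder is $\lesssim h^2\|\b\|_Z/y$, which suffices after $y=h$ together with $\|\b\|_Z\lesssim\|A\b\|_{L^\infty}$.

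The paper sidesteps this by splitting $D\g=S\g+A\g$ with $S\g$ the traceless symmetric part: the $L^\infty$ piece $\Delta_A\g=\int_0^h\langle A\g(x+\alpha s)\alpha,\beta\rangle-\langle A\g(x+\beta s)\beta,\alpha\rangle\,ds$ is bounded by $2h\|A\g\|_{L^\infty}$ with no cancellation at all, while the remainder $\int_0^h\bigl(\langle S\g(x+\alpha s)\alpha,\beta\rangle-\langle S\g(x+\beta s)\beta,\alpha\rangle\bigr)ds$ becomes, precisely because $\langle S\g\,\beta,\alpha\rangle=\langle S\g\,\alpha,\beta\rangle$, a difference of the single scalar $(S\g)_{\alpha,\beta}$ at two nearby points, to which Lemma \ref{poissonBMO} applies with $(S\b)_{\alpha,\beta}\in BMO$. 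That symmetry identity is exactly the cancellation you flag as the main obstacle in your last paragraph; your split assigns the symmetric and divergence parts to the wrong box, and the curl-only remainder estimate you wrote does not hold.
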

\begin{proof}
Let us remind that $A\b\in L^\infty$ gives us bounds for $\div\b$ and $\curl\b$ in the $L^\infty$ norm. Again, we follow the steps in the proof of Theorem \ref{partialfboundedimpliesQ}, so we will first assume that $\b$ has compact support, and later on will remove this assumption. Given $\alpha,\beta\in \R^n$, $|\alpha|=|\beta|=1$, set $a=\alpha h, b=\beta h$ for some $h>0$. For each $\g :\R^n\to\R^n$, define
$$\Delta \g (x)=\Delta_{a,b}\g (x)=\langle \g (x+a)-\g (x),\beta\rangle -\langle \g (x+b)-\g (x),\alpha\rangle.$$
Clearly, $\Delta=\Delta_{a,b}$ is a linear operator in $\g $, and 
\begin{equation}\label{elinftybound}
|\Delta \g (x)|\leq 4\,\|\g \|_{L^\infty}
\end{equation}
Moreover, $\g $ belongs to the class $R_0$ if and only if $|\Delta \g |\leq C\,h$ for some constant $C$ that does not depend on $a$ or $b$. Using that $D\g=S\g+A\g$, we can represent $\Delta \g $  as follows, 
$$\aligned
\Delta \g (x)
&=\int_0^h \frac{d}{ds}\bigg(\langle \g (x+\alpha s), \beta\rangle-\langle \g (x+\beta s),\alpha\rangle\bigg)\,ds\\
&=\int_0^h  \langle D\g (x+\alpha s)\,\alpha, \beta\rangle-\langle D\g (x+\beta s)\,\beta,\alpha\rangle \,ds=\Delta_S\g(x)+\Delta_A\g(x)\endaligned$$
with
$$\aligned
\Delta_S\g (x)&=\int_0^h  \langle S\g (x+\alpha s)\,\alpha, \beta\rangle-\langle S\g (x+\beta s)\,\beta, \alpha\rangle\,ds\\
\Delta_A\g (x)&=\int_0^h\langle A\g (x+\alpha s)\,\alpha,\beta\rangle-\langle A\g (x+\beta s)\,\beta,\alpha\rangle \,ds
\endaligned
$$
By construction, $\u(x,y)=P_y\ast \b (x)$ is harmonic in $\R^{n+1}_+$ and continuous up to the boundary, since $\b \in C_c(\R^n)$. For each $t>0$, 
$$
\aligned 
\b (x)=\u (x,0)
&=\int_0^yt\,\partial^2_{yy}\u (x,t)\,dt-y\,\partial_y\u (x,y)+\u (x,y)\\
&\equiv\int_0^y t\,\w _t(x)\,dt-y\,\v _y(x)+\u _y(x)
\endaligned$$
where we wrote $\u _y(x)=\u (x,y)$, $\v _y(x)= \partial_y\u (x,y)$ and $\w _r(x)= \partial_{yy}^2\u (x,r)$. By the linearity of $\Delta$, which acts only on the $x$ variable, one has
$$
\Delta \b (x)=\int_0^y t\, \Delta \w _t(x)\,dt-y\,\Delta \v _y(x)+\Delta \u _y(x).
$$
We now proceed term by term. First, from Lemma \ref{curldiv} we know that $A\b \in L^\infty$ implies $D\b \in BMO$, which in turn gives $\b \in Z$. Hence, from Lemma \ref{poissonZ},
$$
\left|\int_0^y t\, \Delta \w _t(x)\,dt\right|\leq \int_0^y t\,4\|\w _t\|_{L^\infty}\,dt\leq \int_0^y t\,4\,\frac{C(n)\,\|\b \|_Z}{t}\,dt= C(n)\,y\,\|\b \|_Z.
$$
For the second term, we use that $\Delta =\Delta_S+\Delta_A$,  
$$\aligned
y\,\Delta \v _y(x)&=y\,\Delta_S \v _y(x)+y\,\Delta_A\v _y(x)\\
 \Delta \u _y(x)&= \Delta_S \u _y(x)+ \Delta_A\u _y(x)\\
\endaligned
$$
and proceed first with the $\Delta_A$ terms. For each fixed $y$, Lemma \ref{easyLinfty} gives us that
$$ \aligned
\partial_{x_i}\,\u _y=  \partial_{x_i}  \,(P_y\ast \b )=P_y\ast (\partial_{x_i}\b )
&\Longrightarrow A\u _y=P_y\ast A\b \\
&\Longrightarrow \|A\u _y\|_{L^\infty}=\|P_y\ast A\b \|_{L^\infty}\leq \|P_y\|_1\,\|A\b \|_{L^\infty}=\|A\b \|_{L^\infty}\endaligned$$
On the other hand, since $\u $ is smooth, we can argue similarly to get that
$$\aligned
\partial_{x_i} \v _y = \partial^2_{y, x_i} \u  =  \partial_y\,\left(P_y\ast \partial_{x_i} \b \right)
&\Longrightarrow A\v _y = \partial_y(P_y\ast A\b )\\
&\Longrightarrow \|A\v _y\|_{L^\infty}  \leq C(n)\,\frac{\|A\b \|_{L^\infty}}{y}.
\endaligned
$$
Thus
$$
\aligned
|\Delta_A \u _y(x)|&\leq 2h\,\|A\u _y\|_{L^\infty} \leq 2h\,\|A\b \|_{L^\infty}\\
|y\,\Delta_A\v _y(x)|&\leq 2h y \|A\v _y\|_{L^\infty}\leq C(n)\,h\,\|A\b \|_{L^\infty}
\endaligned
$$
for some dimensional constant $C(n)$. Now is time to proceed with the $\Delta_S$ terms. For any function $\g$, set
$$(S\g)_{\alpha,\beta}(x)=\langle S\g(x)\cdot\alpha, \beta\rangle.$$
Using that $S\g$ is a symmetric matrix, and calling $\gamma=\frac{\alpha-\beta}{|\alpha-\beta|}$,
\begin{equation}\label{Scommut}
\aligned
\langle S\g(x+\alpha s)\,\alpha, \beta\rangle-\langle S\g(x+\beta s)\,\beta, \alpha\rangle
&=\langle\alpha, (S\g(x+\alpha s)-S\g(x+\beta s))\,\beta\rangle\\
&=\langle\alpha,\left(\int_0^{s|\alpha-\beta|}\frac{d}{d\sigma} (S\g(x+\beta s+\sigma\gamma))\,d\sigma\right)\beta\rangle\\
&= \int_0^{s|\alpha-\beta|}\frac{d}{d\sigma} \bigg(\langle\alpha,S\g(x+\beta s+\sigma\gamma)\,\beta\rangle\bigg) d\sigma\\
\endaligned
\end{equation}
Therefore
$$
\aligned
|\Delta_S \g(x)| 
&\leq \int_0^h \int_0^{s|\alpha-\beta|}\left|\frac{d}{d\sigma} (S\g)_{\alpha,\beta}(x+\beta s+\sigma\gamma)   \right|\,d\sigma\, ds\\
&\leq \int_0^h \int_0^{s|\alpha-\beta|} | D((S\g)_{\alpha,\beta})(x+\beta s+\sigma\gamma) |\,d\sigma\, ds\\
&\leq \| D((S\g)_{\alpha,\beta})\|_{L^\infty}\,\int_0^h \int_0^{s|\alpha-\beta|} d\sigma\, ds= \| D((S\g)_{\alpha,\beta})\|_{L^\infty}\,\frac{h^2\,|\alpha-\beta| }2
\endaligned
$$
After applying this to $\g=\u _y$ and to $\g=\v _y$, one obtains 
\begin{equation}\label{finalstep}
|\Delta \b (x)|\leq C(n)\,y\,\|\b \|_Z+C(n)\,h\,\|A\b \|_{L^\infty}+\frac{h^2\,|\alpha-\beta| }2\left(\| D((S\u _y)_{\alpha,\beta})\|_{L^\infty}+y\| D((S\v _y)_{\alpha,\beta}) \|_{L^\infty}\right)
\end{equation}
Next, we see that
$$
\aligned
\u _y=P_y\ast \b  \hspace{1cm}
&\Rightarrow\hspace{1cm}D\u _y= P_y\ast D\b \\
&\Rightarrow\hspace{1cm} S\u _y=P_y\ast S\b \\
&\Rightarrow\hspace{1cm}(S\u _y)_{\alpha,\beta}=P_y\ast (S\b )_{\alpha, \beta}.\endaligned$$
Now, since $D\b \in BMO$ we have in particular that $(S\b )_{\alpha,\beta}\in BMO$, in particular $(S\u _y)_{\alpha,\beta}$ is harmonic Bloch. Lemma \ref{poissonBMO} with $\g= (S\u _y)_{\alpha,\beta}$ gives us that
\begin{equation}\label{sb1}
\|D((S\u _y)_{\alpha,\beta})\|_{L^\infty}=\|D(P_y\ast (S\b )_{\alpha,\beta})\|_{L^\infty}\leq \frac{C(n)\,\|(S\b )_{\alpha,\beta}\|_\ast}{y}\leq \frac{C(n)\,\|S\b \|_\ast}{y}.
\end{equation}
For $\g=(S\v _y)_{\alpha,\beta}$, we proceed similarly and note that 
$$
\aligned
\v _y=\partial_y\u _y=\partial_yP_y\ast \b \hspace{1cm}
&\Rightarrow\hspace{1cm}D\v _y= \partial_yP_y\ast D\b \\
&\Rightarrow\hspace{1cm} S\v _y=\partial_yP_y\ast S\b \\
&\Rightarrow\hspace{1cm}(S\v _y)_{\alpha,\beta}=\partial_yP_y\ast (S\b )_{\alpha, \beta}.\endaligned$$
Therefore one can combine Lemma \ref{poissonBMO} and Lemma \ref{higherorder} and obtain
\begin{equation}\label{sb2}
\|D((S\v _y)_{\alpha,\beta})\|_{L^\infty}=\|D(\partial_yP_y\ast(S\b )_{\alpha,\beta})\|_{L^\infty}\leq\frac{C(n)\,\|(S\b )_{\alpha,\beta}\|_\ast}{y^2}\leq \frac{C(n)\|S\b \|_\ast}{y^2}
\end{equation}
It is worth mentioning here that both in \eqref{sb1} and \eqref{sb2} one could replace the constant  $\|S\b \|_\ast$ by $\|\b\|_Z$ (note that $\|\b\|_Z\leq C\,\|Sb\|_\ast$). To do this, one only needs to use Lemma \ref{poissonZ} instead of Lemma \ref{poissonBMO}. We now plug the above bounds for $\|D((S\u _y)_{\alpha,\beta})\|_{L^\infty}$ and $\|D((S\v _y)_{\alpha,\beta})\|_{L^\infty}$ into \eqref{finalstep}, and then take $h=y$. This finishes the proof in the case $\b \in C_c(\R^n)$.\\
In order to remove the assumption on the compact support, we use once more the auxilliary function $g=g_t$ introduced at \eqref{gt}. We have
$$
D(g\b  )-D^t(g\b )=\b \otimes \nabla g-\nabla g\otimes \b + g\,(D\b -D^t\b )$$
so
$$\aligned
\|D(g\b )-D^t(g\b )\|_{L^\infty} 
&\leq \|D\b -D^t\b \|_{L^\infty} + \sup_{t\leq|x|\leq t^{e^t}}|\b (x)||\nabla g(x)|\\
&\leq \|D\b -D^t\b \|_{L^\infty} + \sup_{t\leq|x|\leq t^{e^t}}C\,|x|\log|x|\frac{1}{t|x|\,\log|x|}\\
&\leq \|D\b -D^t\b \|_{L^\infty} + \frac{C}t\\
\endaligned$$
Now the claim follows since for every $x\in\R^n$ we can pick $t>0$ large enough and such that $|x|< t^{e^t}$ so that $\b =g\b $ in a neighbourhood of $x$, and therefore $D\b -D^t\b =D(g\b )-D^t(g\b )$.
\end{proof}

 \noindent
 Albert Clop\\
 Department of Mathematics and Computer Science\\
 Universitat de Barcelona\\
 08007-Barcelona\\
 CATALONIA\\
 albert.clop@ub.edu\\
 \\
 \\
 Banhirup Sengupta\\
 Departament de Matem\`atiques\\
 Unirersitat Aut\`onoma de Barcelona\\
 08193-Bellaterra\\
 CATALONIA\\
 sengupta@mat.uab.cat\\
 
 \end{document}